\documentclass[12pt]{article}

\usepackage{booktabs}
\usepackage{graphicx}
\usepackage{amsmath}
\usepackage{amssymb}
\usepackage{latexsym}
\usepackage{subfigure}
\usepackage{crop}
\usepackage{framed}
\usepackage{algorithm}
\usepackage[noend]{algpseudocode}
\usepackage{multirow}
\usepackage{bm}
\usepackage{bbm}
\usepackage{enumerate}
\usepackage{framed} % or, "mdframed"
\usepackage{url}
\usepackage{array}
\usepackage{paralist}
\usepackage{diagbox}
\usepackage[colorlinks = true, pdfstartview = FitV, linkcolor = blue, citecolor = blue, urlcolor = blue]{hyperref}

\usepackage{amsthm}
\usepackage[many]{tcolorbox}
\theoremstyle{plain}
\newtheorem{assumption}{Assumption}
\newtheorem{condition}{Condition}
\newtheorem{corollary}{Corollary}
\newtheorem{lemma}{Lemma}
\newtheorem{theorem}{Theorem}
% \newmdtheoremenv[linecolor=black,linewidth=1pt]{theorem}{Theorem}
\tcolorboxenvironment{assumption}{sharp corners,boxrule=0.4pt,colback=white,before skip=\topsep,after skip=\topsep}
\tcolorboxenvironment{condition}{sharp corners,boxrule=0.4pt,colback=white,before skip=\topsep,after skip=\topsep}
\tcolorboxenvironment{corollary}{sharp corners,boxrule=0.4pt,colback=white,before skip=\topsep,after skip=\topsep}
\tcolorboxenvironment{lemma}{sharp corners,boxrule=0.4pt,colback=white,before skip=\topsep,after skip=\topsep}
\tcolorboxenvironment{theorem}{sharp corners,boxrule=0.4pt,colback=white,before skip=\topsep,after skip=\topsep}

%%%%%%%%%%%%%%%%%%%%%%%%%%%%%%%%%%%%%%%%%%%%%%%%%%%%%%%%%%%%%%%%%%%%%%%%%%%%%%%%%%%%%%%%%%%%%%%
\usepackage{xspace}

\renewcommand\th{\textsuperscript{th}\xspace}
%%%%%%%%%%%%%%%%%%%%%%%%%%%%%%%%%%%%%%%%%%%%%%%%%%%%%%%%%%%%%%%%%%%%%%%%%%%%%%%%%%%%%%%%%%%%%%%

%%%%%%%%%%%%%%%%%%%%%%%%%%%%%%%%%%%%%%%%%%%%%%%%%%%%%%%%%%%%%%%%%%%%%%%%%%%%%%%%%%%%%%%%%%%%%
\usepackage{fullpage}
\usepackage{multirow}

%\def\spacingset#1{\renewcommand{\baselinestretch}{#1}\small\normalsize}
%\setlength{\topmargin}{-.50in}
%\setlength{\leftmargin}{0.0in}
%\setlength{\evensidemargin}{0.25in}
%\setlength{\oddsidemargin}{0.25in}
%\setlength{\textheight}{8.5in}
%\setlength{\textwidth}{6.0in}
%%%%%%%%%%%%%%%%%%%%%%%%%%%%%%%%%%%%%%%%%%%%%%%%%%%%%%%%%%%%%%%%%%%%%%%%%%%%%%%%%%%%%%%%%%%%%

%%%%%%%%%%%%%%%%%%%%%%%%%%%%%%% Clever Ref: \cref %%%%%%%%%%%%%%%%%%%%%%%%%%%%%%%%%%%%
%\usepackage[capitalise,noabbrev]{cleveref}
\usepackage[capitalise]{cleveref}
\crefname{equation}{}{}
\creflabelformat{equation}{\textup{(#2#1#3)}}
%%%%%%%%%%%%%%%%%%%%%%%%%%%%%%%%%%%%%%%%%%%%%%%%%%%%%%%%%%%%%%%%%%%%%%%%%%%%%%%%%%%%%%%%%%%%%

%%%%%%%%%%%%%%%%%%%%%%%%%%%%%%%%%%%%%%%%%%%%%%%%%%%%%%%%%%%%%%%%%%%%%%%%%%%%%%%%%%%%%%%%%%%%%
\usepackage[sort,nocompress]{cite}
%\usepackage[sort,nocompress,space]{cite}
%%%%%%%%%%%%%%%%%%%%%%%%%%%%%%%%%%%%%%%%%%%%%%%%%%%%%%%%%%%%%%%%%%%%%%%%%%%%%%%%%%%%%%%%%%%%%

%%%%%%%%%%%%%%%%%%%%%%%%%%%%%%%%%%%%%%%%%%%%%%%%%%%%%%%%%%%%%%%%%%%%%%%%%%%%%%%%%%%%%%%%%%%%%
% Dashed line...use by putting \hdashline
\usepackage{arydshln}
\setlength\dashlinedash{1.2pt}
\setlength\dashlinegap{1.5pt}
\setlength\arrayrulewidth{0.3pt}
%%%%%%%%%%%%%%%%%%%%%%%%%%%%%%%%%%%%%%%%%%%%%%%%%%%%%%%%%%%%%%%%%%%%%%%%%%%%%%%%%%%%%%%%%%%%%

%%%%%%%%%%%%%%%%%%%%%%%%%%%%%%%%%%%%%%%%%%%%%%%%%%%%%%%%%%%%%%%%%%%%%%%%%%%%%%%%%%%%%%%%%%%%%
\usepackage{enumitem}
%% The following can be uncommented instead of using paralist package...but cannot use together.
%\newlist{compactenum}{enumerate}{4}
%\setlist[compactenum,1]{nolistsep}
%\setlist[enumerate,1]{leftmargin=*,wide=0em, noitemsep,nolistsep, label = {\bfseries \arabic*.}}
\setlist[enumerate,1]{leftmargin=*,wide=0em, label = {\bfseries \arabic*.}}
\setlist[itemize,1]{leftmargin=*,wide=0em}
%%%%%%%%%%%%%%%%%%%%%%%%%%%%%%%%%%%%%%%%%%%%%%%%%%%%%%%%%%%%%%%%%%%%%%%%%%%%%%%%%%%%%%%%%%%%%

%%%%%%%%%%%%%%%%%%%%%%%%%%%%%%%%%%%%%%%%%%%%%%%%%%%%%%%%%%%%%%%%%%%%%%%%%%%%%%%%%%%%%%%%%%%%%
%\usepackage{titlesec}
%\titleformat*{\section}{\large\bfseries}
%\titleformat*{\subsection}{\large\bfseries}
%\titleformat*{\subsubsection}{\large\bfseries}
%\titleformat*{\paragraph}{\normalsize\bfseries}
%\titleformat*{\subparagraph}{\normalsize\bfseries}
%%%%%%%%%%%%%%%%%%%%%%%%%%%%%%%%%%%%%%%%%%%%%%%%%%%%%%%%%%%%%%%%%%%%%%%%%%%%%%%%%%%%%%%%%%%%%

%%%%%%%%%%%%%%%%%%%%%%%%%%%%%%%%%%%%%%%%%%%%%%%%%%%%%%%%%%%%%%%%%%%%%%%%%%%%%%%%%%%%%%%%%%%%%
\usepackage{pifont}% http://ctan.org/pkg/pifont
%
%
%%%%%%%%%%%%%%%%%%%%%%%%%%%%%%%%%%%%%%%%%%%%%%%%%%%%%%%%%%%%%%%%%%%%%%%%%%%%%%%%%%%%%%%%%%%%%

%\newcommand {\ourname} {{\bf OurName\ }}

 % Math-bold in subscript
 % Math-bold

%%%%%%%%%%%%%%%%%%%%%%%%%%%%%%%%%%%%%%%%%%%%%%%%%%%%%%%%%%%%%%%%%%%%%%%%%%%%%%%%%%%%%%%%%%%%%

%\renewcommand{\v}[1]{\boldsymbol{#1}}
%\renewcommand{\v}[1]{{\bf{#1}}}
\newcommand{\reals}{\mathbb{R}}

%%%%%%%%%%%%% For matrices %%%%%%%%
\renewcommand {\AA}  { {\mathbf{A}} }

\newcommand {\HH}  { {\mathbf{H}} }

\newcommand {\UU}  { {\mathbf{U}} }
\newcommand {\PP}  { {\mathbf{P}} }

\newcommand{\eye}{\mathbf{I}}

%%%%%%%%%%%%% For vectors %%%%%%%%%%
\newcommand {\zz}  { {\bf z} }
\newcommand {\bgg}  { {\bf g} }

\newcommand {\xx}  { {\bf x} }

\newcommand {\yy}  { {\bf y} }

\newcommand {\pp}  { {\bf p} }

\newcommand {\vv}  { {\bf v} }
\newcommand {\ww}  { {\bf w} }

\newcommand {\bb}  { {\bf b} }

\newcommand {\ee}  { {\bf e} }
\newcommand {\zero}  { {\bf 0} }

\renewcommand{\vec}[1]{\ensuremath{\mathbf{#1}}}
\newcommand{\grad}{\ensuremath {\vec \nabla}}

\newcommand{\defeq}{\triangleq}

%%%%%%%%%%%%%%%%%%%%%%%%%%%%%%%%%%%%%%%%%%%%%%%%%%%%%%%%%%%%%%%%%%%%%%%%%%%%%%%%%%%%%%%%%%%%%  

\definecolor{forestgreen}{rgb}{0.13, 0.55, 0.13}

%%%%%%%%%%%%%%%%%%%%%%%%%%%%%%%%%%%%%%%%%%%%%%%%%%%%%%%%%%%%%%%%%%%%%%%%%%%%%%%%%%%%%%%%%%%%%  

%%%%%%%%%%%%%%%%%%%%%%%%%%%%%%%%%%%%%%%%%%%%%%%%%%%%%%%%%%%%%%%%%%%%%%%%%%%%%%%%%%%%%%%%%%%%%  

%%%%%%%%%%%%%%%%%%%%%%%%%%%%%%%%%%%%%%%%%%%%%%%%%%%%%%%%%%%%%%%%%%%%%%%%%%%%%%%%%%%%%%%%%%%%%  

%%%%%%%%%%%%%%%%%%%%%%%%%%%%%%%%%%%%%%%%%%%%%%%%%%%%%%%%%%%%%%%%%%%%%%%%%%%%%%%%%%%%%%%%%%%%%  
\newcounter{comment}\setcounter{comment}{0}

%%%%%%%%%%%%%%%%%%%%%%%%%%%%%%%%%%%%%%%%%%%%%%%%%%%%%%%%%%%%%%%%%%%%%%%%%%%%%%%%%%%%%%%%%%%%%  

% \theoremclass{Theorem}
% \theoremstyle{break}
% \newframedtheorem{theorem}{Theorem}
% \newframedtheorem{corollary}{Corollary}
% \newframedtheorem{lemma}{Lemma}
% \newframedtheorem{definition}{Definition}
% \newframedtheorem{proposition}{Proposition}
% \newframedtheorem{assumption}{Assumption}
% \newframedtheorem{condition}{Condition}
% %\newtheorem{theorem}{Theorem}
% %\newtheorem{conjecture}[theorem]{Conjecture}
% %\newtheorem{corollary}[theorem]{Corollary}
% %\newtheorem{proposition}[theorem]{Proposition}
% %\newtheorem{lemma}[theorem]{Lemma}
% %\newtheorem{definition}{Definition}
% \newtheorem{example}{Example}
% % \newtheorem{condition}{Condition}
% % \newtheorem{remark}{Remark}
% %\newtheorem{experiment}{Experiment}

% \newcommand{\qed}{\nobreak \ifvmode \relax \else
% 	\ifdim\lastskip<1.5em \hskip-\lastskip
% 	\hskip1.5em plus0em minus0.5em \fi \nobreak
% 	\vrule height0.5em width0.5em depth0.25em\fi}
%\newenvironment{proof}[1][Proof]{\begin{trivlist}
%		\item[\hskip \labelsep {\bfseries #1}]}{\qed\end{trivlist}}
\newenvironment{remark}[1][Remark:]{\begin{trivlist}
		\item[\hskip \labelsep {\bfseries #1}]}{\end{trivlist}}

%\newtheorem{theorem}{Theorem}
%\newtheorem{conjecture}[theorem]{Conjecture}
%\newtheorem{definition}{Definition}
%\newtheorem{lemma}[theorem]{Lemma}
%\newtheorem{proposition}[theorem]{Proposition}
%\newtheorem{corollary}[theorem]{Corollary}
%\newtheorem{assumption}{Assumption}
%\newtheorem{claim}[theorem]{Claim}
%\newtheorem{fact}[theorem]{Fact}
%\newtheorem{openprob}[theorem]{Open Problem}
%\newtheorem{remk}[theorem]{Remark}
%\newtheorem{exmp}[theorem]{Example}
%\newtheorem{apdxlemma}{Lemma}
%\newtheorem{condition}{Condition}
%\newtheorem{problem}{Problem}

%\newenvironment{proof}[1][Proof]{\begin{trivlist}\item[\hskip \labelsep {\bfseries #1}]}{\end{trivlist}}

%\newenvironment{remark}[1][Remark:]{\begin{trivlist}
%\item[\hskip \labelsep {\bfseries #1}]}{\end{trivlist}}

%\newcommand{\qed}{\nobreak \ifvmode \relax \else
%      \ifdim\lastskip<1.5em \hskip-\lastskip
%      \hskip1.5em plus0em minus0.5em \fi \nobreak
%      \vrule height0.75em width0.5em depth0.25em\fi}
%%%%%%%%%%%%%%%%%%%%%%%%%%%%%%%%%%%%%%%%%%%%%%%%%%%%%%%%%%%%%%%%%%%%%%%%%%%%%%%%%%%%%%%%%%%%%  

% \usepackage{tcolorbox} % for boxed text
% \tcbuselibrary{breakable}
% \tcbuselibrary{skins}
% use by 
% \begin{tcolorbox}[breakable,enhanced]
%\end{tcolorbox}

%%%%%%%%%%%%%%%%%%%%%%%%%%%%%%%%%%%%%%%%%%%%%%%%%%%%%%%%%%%%%%%%%%%%%%%%%%%%%%%%%%%%%%%%%%%%
\usepackage{listings} % to inser code

\definecolor{mygreen}{rgb}{0,0.6,0}
\definecolor{mygray}{rgb}{0.5,0.5,0.5}
\definecolor{mymauve}{rgb}{0.58,0,0.82}

\lstset{ %
  backgroundcolor=\color{white},   % choose the background color; you must add \usepackage{color} or \usepackage{xcolor}; should come as last argument
  basicstyle=\footnotesize,        % the size of the fonts that are used for the code
  breakatwhitespace=false,         % sets if automatic breaks should only happen at whitespace
  breaklines=true,                 % sets automatic line breaking
  captionpos=b,                    % sets the caption-position to bottom
  commentstyle=\color{mygreen},    % comment style
  deletekeywords={...},            % if you want to delete keywords from the given language
  escapeinside={\%*}{*)},          % if you want to add LaTeX within your code
  extendedchars=true,              % lets you use non-ASCII characters; for 8-bits encodings only, does not work with UTF-8
  frame=single,	                   % adds a frame around the code
  keepspaces=true,                 % keeps spaces in text, useful for keeping indentation of code (possibly needs columns=flexible)
  keywordstyle=\color{blue},       % keyword style
  language=Octave,                 % the language of the code
  morekeywords={*,...},           % if you want to add more keywords to the set
  numbers=left,                    % where to put the line-numbers; possible values are (none, left, right)
  numbersep=5pt,                   % how far the line-numbers are from the code
  numberstyle=\tiny\color{mygray}, % the style that is used for the line-numbers
  rulecolor=\color{black},         % if not set, the frame-color may be changed on line-breaks within not-black text (e.g. comments (green here))
  showspaces=false,                % show spaces everywhere adding particular underscores; it overrides 'showstringspaces'
  showstringspaces=false,          % underline spaces within strings only
  showtabs=false,                  % show tabs within strings adding particular underscores
  stepnumber=2,                    % the step between two line-numbers. If it's 1, each line will be numbered
  stringstyle=\color{mymauve},     % string literal style
  tabsize=2,	                   % sets default tabsize to 2 spaces
  title=\lstname                   % show the filename of files included with \lstinputlisting; also try caption instead of title
}

%%%%%%%%%%%%%%%%%%%%%%%%%%%%%%%%%%%%%%%%%%%%%%%%%%%%%%%%%%%%%%%%%%%%%%%%%%%%%%%%%%%5

\newcommand*\norm[1]{\left\| #1\right\|}

%%%%%%%%%%%%%%%%%%%%%%%%%

 % Real numbers
\newcommand\bbR{\ensuremath{\mathbb{R}}} % Real numbers
 % Real numbers
 % Integers

\DeclareMathOperator*{\argmin}{arg\,min}
\usepackage{titlesec}
\titleclass{\subsubsubsection}{straight}[\subsection]

\usepackage{booktabs}

\newcounter{subsubsubsection}[subsubsection]
\renewcommand\thesubsubsubsection{\thesubsubsection.\arabic{subsubsubsection}}
 % optional; useful if paragraphs are to be numbered 

\titleformat{\subsubsubsection}
  {\normalfont\normalsize}{\thesubsubsubsection}{1em}{}
\titlespacing*{\subsubsubsection}{0pt}{3.25ex plus 1ex minus .2ex}{1.5ex plus .2ex}

\begin{document}

\title{DINGO: Distributed Newton-Type Method for Gradient-Norm Optimization}
\author{
    Rixon Crane\footnote{School of Mathematics and Physics, University of Queensland, Australia. Email: r.crane@uq.edu.au} 
    \qquad 
    Fred Roosta\footnote{School of Mathematics and Physics, University of Queensland, Australia, and International Computer Science Institute, Berkeley, USA. Email: fred.roosta@uq.edu.au}
    }
\maketitle

\begin{abstract}
    For optimization of a sum of functions in a distributed computing environment, we present a novel communication efficient Newton-type algorithm that enjoys a variety of advantages over similar existing methods. Similar to Newton-MR, our algorithm, DINGO, is derived by optimization of the gradient's norm as a surrogate function. DINGO does not impose any specific form on the underlying functions, and its application range extends far beyond convexity. In addition, the distribution of the data across the computing environment can be arbitrary. Further, the underlying sub-problems of DINGO are simple linear least-squares, for which a plethora of efficient algorithms exist. Lastly, DINGO involves a few hyper-parameters that are easy to tune. Moreover, we theoretically show that DINGO is not sensitive to the choice of its hyper-parameters in that a strict reduction in the gradient norm is guaranteed, regardless of the selected hyper-parameters. We demonstrate empirical evidence of the effectiveness, stability and versatility of our method compared to other relevant algorithms.
\end{abstract}

\section{Introduction}\label{Section: Introduction}
	Consider the optimization problem
	\begin{align}
	\label{eq: finite sum}
	\min_{\ww \in \bbR^{d}} \bigg\{ f(\ww) \triangleq \frac{1}{m} \sum_{i = 1}^{m} f_{i}(\ww) \bigg\},
	\end{align}
	in a distributed computing environment involving $ m $ workers, in which the $ i\th $ worker can only locally access the $ i\th $ component function, $ f_{i} $. Such distributed computing settings arise increasingly more frequent as a result of technological and communication advancements that have enabled the collection of and access to large scale datasets.
    
    As a concrete example, take a data fitting application, in which given $ n $ data points, $ \{\xx_{i}\}_{i=1}^{n} $, and their corresponding loss function, $ \ell_{i}(\ww; \xx_{i}) $, parameterized by $ \ww $, the goal is to minimize the overall loss as $\min_{\ww \in \bbR^{d}} \; \sum_{i=1}^{n} \ell_i(\ww; \xx_{i})/n$. Here, each $\ell_{i}:\bbR^d\rightarrow\bbR $ corresponds to an observation (or a measurement) which models the loss (or misfit) given a particular choice of the underlying parameter $ \ww $. Such problems appear frequently in machine learning, e.g., \cite{shalev2014understanding,friedman2001elements,mohri2012foundations} and scientific computing, e.g., \cite{ye2017optimization,rodoas1,rodoas2}.
    However, in ``big data'' regimes where $ n \gg 1 $, lack of adequate computational resources, in particular storage, can severely limit, or even prevent, any attempts at solving such optimization problems in a traditional stand-alone way, e.g., using a single machine. 
    This can be remedied through \textit{distributed computing}, in which resources across a network of stand-alone computational nodes are ``pooled'' together so as to scale to the problem at hand. In such a setting, where $ n $ data points are distributed across $ m $ workers,  one can instead consider formulation \cref{eq: finite sum} with  
    \begin{align}
    \label{eq: finite sum dist fi}
    f_{i}(\ww) \triangleq \frac{1}{|S_{i}|}\sum_{j \in S_i} \ell_j(\ww; \xx_{j}), 
    \quad i = 1,\ldots, m,
    \end{align}
    where $S_{i} \subseteq \{1,\ldots,n\}$, with cardinality denoted by $ |S_{i}| $, correspond to the distribution of data across the nodes, i.e., the $ i\th $ node has access to a portion of the data of size $ |S_{i}| $, indexed by the set $ S_{i}  $.
    
    The increasing need for such computing environments, in general, has motivated the development of many distributed computing frameworks, such as MapReduce \cite{MapReduce}, Apache Spark \cite{Spark} and Alchemist \cite{Alchemist}. 
    Unfortunately, with their many benefits comes the caveat of \textit{communication costs}.  
    Indeed, in distributed settings, the amount of communications, i.e., messages
    exchanged across the network, are often considered a major bottleneck of computations (often more so than local computation times), as they can be expensive in terms of both physical resources and time through latency \cite{bekkerman2011scaling, DiSCO}. 
    
    In this light, when designing efficient distributed optimization algorithms, it is essential to keep communication overhead as low as possible. 
    First-order methods \cite{beck2017first}, e.g., stochastic gradient descent (SGD) \cite{SGD}, solely rely on gradient information and as a result are rather easy to implement in distributed settings.
    They often require the performance of many computationally inexpensive iterations, 
    which can be suitable for execution on a single machine.
    However, as a direct consequence, they can incur excessive communication costs in distributed environments 
    and, hence, they might not be able to take full advantage of the available distributed computational resources. 
    
    By employing curvature information in the form of the Hessian matrix, second-order methods aim at transforming the gradient such that it is a more suitable direction to follow.  
    Compared with first-order alternatives, although second-order methods perform more computations per iteration,  they often require far fewer iterations to achieve similar results. 
    In distributed settings, this feature can directly translate to significantly less communication costs. 
    As a result, distributed second-order methods have the potential to become the method of choice for distributed optimization tasks. 
    
    The rest of this paper is organized as follows. 
    After introducing the notation used throughout the paper, the remainder of \cref{Section: Introduction} discusses related works and, in their light, briefly summarizes our contributions. The distributed computing environment considered in this paper is also introduced towards the end of \cref{Section: Introduction}. 
    In \cref{Section: Our Algorithm}, we provide a description and derivation of our algorithm.
    Theoretical properties and assumptions are detailed in \cref{Section: Theoretical Analysis}.
    Results of numerical experiments, with comparisons to other relevant methods, are presented in \cref{Section: Experiments}. 
    Concluding remarks are made in \cref{Section: conclusion}.
    Proofs are provided in Appendix~\ref{appendix: proofs}.
    
    \subsubsection*{Notation}
    Throughout this paper, bold lowercase and bold uppercase letters, such as $\vv$ and $\AA$, denote column vectors and matrices, respectively.
    We let $\langle\cdot,\cdot\rangle$ denote the common \textit{Euclidean inner product} defined by 
    $\langle \xx,\yy \rangle = \xx^T\yy$ for $\xx,\yy\in\bbR^d$.
    Given a vector $\vv$ and matrix $\AA$, we denote their vector $\ell_2$ norm and matrix \textit{spectral} norm as 
    $\|\vv\|$ and $\|\AA\|$, respectively. 
    For $\xx,\zz\in\bbR^d$ we let $[\xx,\zz]\defeq\big\{\xx+\tau(\zz-\xx) \:|\: 0\leq\tau\leq1\big\}$.
    The \textit{range} and \textit{null space} of a matrix $\AA\in\bbR^{m\times n}$ are denoted by 
    $\mathcal{R}(\AA) = \{\AA\xx \:|\: \xx\in\bbR^n\}$ and $\mathcal{N}(\AA) = \{\xx\in\bbR^n \:|\: \AA\xx=\zero\}$, respectively. 
    The \textit{orthogonal complement} of a vector subspace $V$ of $\bbR^d$ is denoted by 
    $V^\perp = \big\{\xx\in\bbR^d \:|\: \langle\xx,\yy\rangle=0\text{ for all }\yy\in V\big\}$.
    The \textit{Moore--Penrose inverse} \cite{AGeneralizedInverseForMatrices} of $\AA$ is denoted by $\AA^\dagger$. 
    We let $\ww_t\in\mathbb{R}^{d}$ denote the point at iteration $t$. For notational convenience, we denote
    \begin{align}\label{eq: H and g}
    \bgg_{t,i} \defeq \grad f_i(\ww_t), \quad
    \HH_{t,i} \defeq \grad^2 f_i(\ww_t), \quad
    \bgg_{t} \defeq \grad f(\ww_t), \quad
    \HH_{t} \defeq \grad^2 f(\ww_t).
    \end{align}
    We also let
    \begin{equation}\label{eq: H tilde and g tilde}
    \tilde{\HH}_{t,i} 
    \defeq
    \begin{bmatrix}
    \HH_{t,i} \\
    \phi\eye
    \end{bmatrix} \in\bbR^{2d\times d} 
    \quad\text{and}\quad
    \tilde{\bgg}_t 
    \defeq
    \begin{pmatrix}
    \bgg_t \\
    \zero
    \end{pmatrix} \in\bbR^{2d},
    \end{equation}
    where $\phi>0$, $ \eye $ is the identity matrix, and $ \zero $ is the zero vector. Further notation will be introduced where necessary. 
    
    \subsubsection*{Related Work} 
    Owing to the above-mentioned potential, many distributed second-order optimization algorithms have recently emerged to solve \cref{eq: finite sum}. 
    Among them, most notably are GIANT \cite{GIANT}, DiSCO \cite{DiSCO}, DANE \cite{DANE}, InexactDANE and AIDE \cite{AIDE}. 
    However, most of these methods come with disadvantages that can limit their applicability. 
    For example, not only do many of these algorithms rely on rather stringent (strong) convexity assumptions, but also the underlying functions are required to be of a specific form. 
    Further, for some of these methods, the distribution of data is required to satisfy particular, and often rather restrictive, assumptions. 
    The underlying sub-problems for many of these methods involve non-linear optimization problems, which might not be easy to solve. 
    Last but not least, the success of many of these methods is tightly intertwined with fine-tuning of (often many) hyper-parameters, which is very expensive for some and downright impossible for others.
    
    More specifically, the theoretical analysis of both GIANT and DiSCO is limited to the case where each $ f_{i}  $  is strongly convex, and for GIANT they are also of the special form where in \cref{eq: finite sum dist fi} we have 
    $ \ell_j(\ww; \xx_j) = \psi_j\big(\langle\ww,\xx_j\rangle\big) + \gamma\|\ww\|^2$, $\gamma>0$ is a regularization parameter and $\psi_j:\bbR\rightarrow\bbR$ is convex, e.g., linear predictor models. 
    Moreover, in theory, GIANT requires $|S_{i}| > d$, for all $i$, which can be violated in many practical situations. 
    These two methods, compared to DANE, InexactDANE and AIDE, have a significant advantage in that they are 
    easier in implementation. More specifically, they involve fewer hyper-parameters to tune and their sub-problems involve solutions of symmetric positive-definite linear-systems, for which the conjugate gradient (CG) method \cite{nocedal2006numerical} is highly effective. 
    In sharp contrast, the performance of DANE, InexactDANE and AIDE is greatly affected by rather meticulous fine-tuning of many more hyper-parameters. Further, their underlying sub-problems involve non-linear optimization problems, which can themselves be rather difficult to solve. 
    However, the theoretical analysis of DANE, InexactDANE and AIDE extends beyond the simple strongly convex case and these methods are applicable to more general non-convex \eqref{eq: finite sum}.
    
    Another notable and recent distributed optimization algorithm is Newton-ADMM \cite{fang2018distributed}.
    Similar to both GIANT and DiSCO, Newton-ADMM requires $f_i$ to be of the special form where in \eqref{eq: finite sum dist fi} we have $\ell_j(\ww;\xx_j) = \psi_j(\ww;\xx_j) + g(\ww)$, with smooth convex $\psi_j:\bbR^d\rightarrow\bbR$ and smooth (strongly) convex regularization $g:\bbR^d\rightarrow\bbR$. 
    This algorithm uses Newton's method to solve the sub-problems of ADMM \cite{boyd2011distributed}, which allows it to effectively utilize GPUs. 
    Hence, like DANE, InexactDANE and AIDE, the sub-problems of Newton-ADMM are non-linear optimization problems. 
    However, unlike other Newton-type methods mentioned here, in Newton-ADMM the curvature is not employed in transforming the gradient direction. Instead, the second-order information is leveraged, through the use of Newton-type methods, for efficient solution of ADMM sub-problems. As a result, since Newton-ADMM is not second-order in its construction, it is not considered further in this paper.
    
    \subsubsection*{Contributions} 
    Here, we present a novel communication efficient distributed second-order optimization method that aims to alleviate many of the disadvantages of related methods.
    Our approach is inspired by and follows many ideas of recent results on Newton-MR \cite{Newton-MR}, which extends the application range of the classical Newton-CG beyond (strong) convexity and smoothness. In fact, our method can be viewed as a distributed variant of Newton-MR. 
    More specifically, our algorithm, named DINGO for ``\textbf{DI}stributed \textbf{N}ewton-type method for \textbf{G}radient-norm \textbf{O}ptimization'', is derived by optimization of the gradient's norm as a surrogate function for \eqref{eq: finite sum}, i.e.,
    \begin{align}
        \label{eq: finite sum grad}
        \min_{\ww \in \reals^{d}} 
        \Bigg\{ \frac{1}{2} \big\|\grad f(\ww)\big\|^2 
        = \frac{1}{2 m^{2}} \norm{\sum_{i = 1}^{m} \grad f_{i}(\ww)}^{2} \Bigg\}.
    \end{align}
    
    When $f$ is \textit{invex}, \cite{ben1986invexity}, the problems \cref{eq: finite sum} and \cref{eq: finite sum grad} have the same solutions. Recall that invexity is the generalization of convexity, which extends the sufficiency of the first
    order optimality condition, e.g., Karush-Kuhn-Tucker conditions, to a broader class of
    problems than simple convex programming. In other words, invexity is a special case of non-convexity, which subsumes convexity as a sub-class; see \cite{mishra2008invexity} for a detailed treatment of invexity.
    In this light, unlike DiSCO and GIANT, by considering the surrogate function \eqref{eq: finite sum grad}, DINGO's application range and theoretical guarantees extend far beyond convex settings to invex problems. 
    Naturally, by considering \eqref{eq: finite sum grad}, DINGO may converge to a local maximum or saddle point in non-invex problems.
    
    As with GIANT and DiSCO, and unlike DANE, InexactDANE and AIDE, our algorithm involves a few hyper-parameters that are easy to tune. Moreover, we theoretically show that DINGO is not too sensitive to the choice of its hyper-parameters in that a strict reduction in the gradient norm is guaranteed, regardless of the selected hyper-parameters.
    Further, the underlying sub-problems of DINGO are simple linear least-squares, for which a plethora of efficient algorithms exist.
    However, unlike GIANT and DiSCO, DINGO does not impose any specific form on the underlying functions, and its theoretical analysis and application range extend beyond convexity. Also, unlike GIANT, the distribution of the data across the computing environment can be arbitrary, i.e., we allow for $|S_{i}| < d$. 
    See Tables~\ref{table: 1} and \ref{table: 2} for a summary of high-level algorithm properties. 
    
    \begin{table}[t]
    \caption{Comparison of problem class, function form and data distribution. 
    Note that DINGO doesn't assume invexity in analysis, rather it is suited to invex problems in practice. 
    }\label{table: 1}
    \vspace{10pt}
    \centering
    \begin{tabular}{@{} lccc @{}}
        \toprule
        & Problem Class & Form of $ \ell_j(\ww; \xx_{j}) $ & Data Distribution \\
        \midrule
        \textbf{DINGO} & Invex & Any & Any \\
        \textbf{GIANT} \cite{GIANT} & Strongly-convex & $\psi_j(\langle \ww, \xx_{j} \rangle) + \gamma \|\ww\|^{2}$  & $|S_{i}| > d$ \\
        \textbf{DiSCO} \cite{DiSCO} & Strongly-convex & Any & Any \\
        \textbf{InexactDANE} \cite{AIDE} & General Non-convex & Any & Any \\
        \textbf{AIDE} \cite{AIDE} & General Non-convex & Any & Any \\
        \bottomrule
    \end{tabular}
\end{table}
    \begin{table}[t]
    \caption{Comparison of number of sub-problem hyper-parameters and communication rounds per iteration. 
    Under inexact update, the choice of sub-problem solver will determine additional hyper-parameters.
    % The communication rounds per iteration are of interest in practical settings 
    Most communication rounds of DiSCO arise when iteratively solving its sub-problem.
    We assume DINGO and GIANT use two communication rounds for line-search per iteration.
    }\label{table: 2}
    \vspace{10pt}
    \centering
    \begin{tabular}{@{} lcc @{}}
        \toprule
        & \multirow{3}{5cm}{\centering Number of Sub-problem Hyper-Parameters \\ (Under Exact Update)} & \multirow{3}{6cm}{\centering Communication Rounds \\ Per Iteration \\ (Under Inexact Update)} \\ \\ \\
        \midrule
        \textbf{DINGO} & $2$ & $\leq 8$ \\
        \textbf{GIANT} \cite{GIANT} & $0$ & $6$ \\
        \textbf{DiSCO} \cite{DiSCO} & $0$ & $2+2 \times (\text{sub-problem iterations})$ \\
        \textbf{InexactDANE} \cite{AIDE} & $2$ & $4$ \\
        \textbf{AIDE} \cite{AIDE} & $3$ & $4 \times (\text{inner InexactDANE iterations})$ \\
        \bottomrule
    \end{tabular}
\end{table}

    \subsubsection*{Distributed Environment}
    The distributed computing environment that we consider is also assumed by GIANT, DiSCO, DANE, InexactDANE and AIDE.
    Moreover, as with these methods, we restrict communication to vectors of size linear in $d$, i.e., $\mathcal{O}(d)$.
    A \textit{communication round} is performed when the driver uses a \textit{broadcast} operation to send information to one or more workers in parallel, or uses a \textit{reduce} operation to receive information from one or more workers in parallel.
    For example, computing the gradient at iteration $t$, namely $\bgg_t = \sum_{i=1}^{m}\bgg_{t,i}/m$, requires two communication rounds, i.e., the driver broadcasts $\ww_t$ to all workers and then, by a reduce operation, receives $\bgg_{t,i}$ for all $i$.
    We further remind that the distributed computational model considered here is such that \emph{the main bottleneck involves the communications across the network}.

\section{DINGO}\label{Section: Our Algorithm}
    In this section, we describe the derivation of DINGO. 
    This method is depicted in Algorithm~\ref{alg: Our Method} for exact update, while inexact update conditions can be seamlessly incorporated following the discussion in Section~\ref{section: update direction}.
    The assumptions made on \eqref{eq: finite sum} along with the theoretical analysis are discussed in \cref{Section: Theoretical Analysis}.
    Each iteration $t$ involves the computation of two main ingredients: an \textit{update direction} $\pp_t$, and an appropriate \textit{step-size} $\alpha_t$. As usual, our next iterate is then set as $\ww_{t+1}=\ww_t+\alpha_t\pp_t$.
    Below, we provide details for computations of $\pp_t$ and $\alpha_t$.
    
    \subsection{Update Direction\texorpdfstring{: $\pp_t$}{}}\label{section: update direction}
        We begin iteration $t$ by distributively computing the gradient $\bgg_t = \sum_{i=1}^{m}\bgg_{t,i}/m$.
        Thereafter, the driver broadcasts $\bgg_t$ to all workers and in parallel each worker $i$ computes
        $\HH_{t,i}\bgg_t$, $\HH_{t,i}^\dagger\bgg_t$ and $\tilde{\HH}_{t,i}^\dagger\tilde{\bgg}_t$. 
        By a reduce operation, the driver computes the Hessian-gradient product
        $\HH_t\bgg_t = \sum_{i=1}^{m}\HH_{t,i}\bgg_t / m$ as well as the vectors
        $\sum_{i=1}^{m}\HH_{t,i}^\dagger\bgg_t / m$ and 
        $\sum_{i=1}^{m}\tilde{\HH}_{t,i}^\dagger\tilde{\bgg}_t / m$. 
        Computing the update direction $\pp_t$ involves three cases, all of which involve simple linear least-squares sub-problems: 
        
        \begin{enumerate}[label = \textbf{Case \arabic*} ]
        \item \label{Case 1}
            If 
            \begin{equation*}
                \bigg\langle \frac{1}{m}\sum_{i=1}^{m}\HH_{t,i}^\dagger\bgg_t, \HH_t\bgg_t \bigg\rangle \geq \theta\|\bgg_t\|^2,
            \end{equation*}
            then we let $\pp_t = \sum_{i=1}^{m}\pp_{t,i} / m$, with $\pp_{t,i}=-\HH_{t,i}^{\dagger}\bgg_t$.
            Here, we check that the potential update direction ``$-\sum_{i=1}^{m}\HH_{t,i}^{\dagger}\bgg_t/m$" is a suitable descent direction for our surrogate objective \eqref{eq: finite sum grad}. 
            We do this since we have not imposed any restrictive assumptions on \eqref{eq: finite sum} that would automatically guarantee descent; see Lemma~\ref{Lemma: always case 1} for an example of such restrictive assumptions.

        \item \label{Case 2}
            If \labelcref{Case 1} fails, we include regularization and check again that the new potential update direction yields suitable descent.
            Namely, if
            \begin{equation*}
                \bigg\langle \frac{1}{m}\sum_{i=1}^{m}\tilde{\HH}_{t,i}^\dagger\tilde{\bgg}_t, \HH_t\bgg_t \bigg\rangle \geq \theta\|\bgg_t\|^2,
            \end{equation*}
            then we let $\pp_t = \sum_{i=1}^{m}\pp_{t,i} / m$, with $\pp_{t,i}=-\tilde{\HH}_{t,i}^{\dagger}\tilde{\bgg}_t$.

        \item \label{Case 3}
            If all else fails, we enforce descent in the norm of the gradient. 
            Specifically, as \labelcref{Case 2} does not hold, the set
            \begin{equation}\label{eq: set of Case 3 iteration indices}
                \mathcal{I}_t 
                \defeq 
                \big\{ i=1,2,\ldots,m \mid \langle \tilde{\HH}_{t,i}^\dagger\tilde{\bgg}_t, \HH_t\bgg_t \rangle < \theta\|\bgg_t\|^2 \big\},
            \end{equation}
            is non-empty.
            In parallel, the driver broadcasts $\HH_t\bgg_t$ to each worker $i\in\mathcal{I}_t$ and has it locally compute the solution to
            \begin{equation}\label{eq: Lagrange Sub Problem 1}
            	\argmin_{\pp_{t,i}} \frac{1}{2} \|\HH_{t,i}\pp_{t,i}+\bgg_t\|^2 + \frac{\phi^2}{2}\|\pp_{t,i}\|^2, 
            	\quad\textrm{such that}\quad 
                \langle \pp_{t,i},\HH_t\bgg_t \rangle \leq -\theta\|\bgg_t\|^2,
            \end{equation}
            where $\phi$ is as in Algorithm~\ref{alg: Our Method}.
            We now solve the sub-problem \eqref{eq: Lagrange Sub Problem 1} using the method of Lagrange multipliers \cite{nocedal2006numerical}.
            We rewrite \eqref{eq: Lagrange Sub Problem 1} as
            \begin{equation*}
                \argmin_{\pp_{t,i}} \frac{1}{2} \|\tilde{\HH}_{t,i}\pp_{t,i}+\tilde{\bgg}_t\|^2, \quad\textrm{such that}\quad 
                \langle \pp_{t,i},\HH_t\bgg_t \rangle \leq -\theta\|\bgg_t\|^2,
            \end{equation*}
            where $\tilde{\HH}_{t,i}$ and $\tilde{\bgg}_t$ are defined in \eqref{eq: H tilde and g tilde}. 
            Let
            \begin{equation}\label{eq: Lagrange Sub Problem 2}
                \mathcal{L}(\pp_{t,i},\lambda_{t,i}) 
                = \frac{1}{2} \|\tilde{\HH}_{t,i}\pp_{t,i}+\tilde{\bgg}_t\|^2 
                + \lambda_{t,i}\big( \langle\pp_{t,i},\HH_t\bgg_t\rangle + \theta\|\bgg_t\|^2 \big).
            \end{equation}
            Therefore,  
            $\zero 
            = \nabla_{\pp_{t,i}} \mathcal{L}(\pp_{t,i},\lambda_{t,i}) 
            = \tilde{\HH}_{t,i}^T(\tilde{\HH}_{t,i}\pp_{t,i}+\tilde{\bgg}_t) + \lambda_{t,i} \HH_t\bgg_t$ 
            if and only if
            \begin{equation}\label{eq: Lagrange Sub Problem 3}
                \tilde{\HH}_{t,i}^T\tilde{\HH}_{t,i}\pp_{t,i} = -\tilde{\HH}_{t,i}^T\tilde{\bgg}_t - \lambda_{t,i} \HH_t\bgg_t.
            \end{equation}
            Due to regularization by $\phi$, the matrix $\tilde{\HH}_{t,i}^T$ has full column rank 
            and $\tilde{\HH}_{t,i}^T\tilde{\HH}_{t,i}$ is invertible.
            Therefore, the unique solution to \eqref{eq: Lagrange Sub Problem 3} is
            \begin{equation}\label{eq: Lagrange Sub Problem 4}
                \pp_{t,i} 
                = -(\tilde{\HH}_{t,i}^T\tilde{\HH}_{t,i})^{-1}\tilde{\HH}_{t,i}^T\tilde{\bgg}_t 
                    - \lambda_{t,i} (\tilde{\HH}_{t,i}^T\tilde{\HH}_{t,i})^{-1} \HH_t\bgg_t
                = -\tilde{\HH}_{t,i}^\dagger\tilde{\bgg}_t - \lambda_{t,i}(\tilde{\HH}_{t,i}^T\tilde{\HH}_{t,i})^{-1} \HH_t\bgg_t.
            \end{equation}

            Assumption~\ref{assumption: GHNSP} implies that for $ \bgg_t \neq \zero $, we have $ \HH_t\bgg_t \neq \zero $, which in turn gives 
            \begin{equation*}
                \big\langle (\tilde{\HH}_{t,i}^T\tilde{\HH}_{t,i})^{-1}\HH_t\bgg_t,\HH_t\bgg_t \big\rangle > 0.
            \end{equation*} 
            Thus, 
            $0 
            = 
            \frac{\partial}{\partial \lambda_{t,i}} \mathcal{L}(\pp_{t,i},\lambda_{t,i})
            = 
            - \lambda_{t,i} \big\langle (\tilde{\HH}_{t,i}^T\tilde{\HH}_{t,i})^{-1}\HH_t\bgg_t,\HH_t\bgg_t \big\rangle 
            - \langle \tilde{\HH}_{t,i}^\dagger\tilde{\bgg}_t, \HH_t\bgg_t \rangle
            + \theta\|\bgg_t\|^2$
            if and only if
            \begin{equation*}
                \lambda_{t,i}
                = 
                \frac{- \langle \tilde{\HH}_{t,i}^\dagger\tilde{\bgg}_t, \HH_t\bgg_t \rangle + \theta\|\bgg_t\|^2}
                    {\big\langle (\tilde{\HH}_{t,i}^T\tilde{\HH}_{t,i})^{-1}\HH_t\bgg_t,\HH_t\bgg_t \big\rangle}.
            \end{equation*}
            Therefore, the solution to the sub-problem \eqref{eq: Lagrange Sub Problem 1} is
            \begin{equation}\label{eq: case 3 update direction}
                \pp_{t,i} 
                = 
                -\tilde{\HH}_{t,i}^\dagger\tilde{\bgg}_t - \lambda_{t,i}(\tilde{\HH}_{t,i}^T\tilde{\HH}_{t,i})^{-1} \HH_t\bgg_t, 
                \quad\text{where}\quad
                \lambda_{t,i} 
                = 
                \frac{- \langle \tilde{\HH}_{t,i}^\dagger\tilde{\bgg}_t, \HH_t\bgg_t \rangle + \theta\|\bgg_t\|^2}
                    {\big\langle (\tilde{\HH}_{t,i}^T\tilde{\HH}_{t,i})^{-1}\HH_t\bgg_t,\HH_t\bgg_t \big\rangle} > 0.
            \end{equation}
            The term $\lambda_{t,i}$ in \eqref{eq: case 3 update direction} is positive by the definition of $\mathcal{I}_t$.
            In conclusion, for \labelcref{Case 3}, each worker $i\in\mathcal{I}_t$ computes \eqref{eq: case 3 update direction} and, using a reduce operation, the driver then computes the update direction $\pp_t = \sum_{i=1}^{m}\pp_{t,i} / m$, which by construction yields descent in the surrogate objective \eqref{eq: finite sum grad}. 
            Note that $\pp_{t,i} = -\tilde{\HH}_{t,i}^\dagger\tilde{\bgg}_t$ for all $i\notin\mathcal{I}_t$ have already been obtained as part of \labelcref{Case 2}.
            
            \begin{remark}
                The three cases help avoid the need for any unnecessary assumptions on data distribution or the knowledge of any practically unknowable constants. 
                In fact, given Lemma~\ref{Lemma: always case 1}, which imposes a certain assumption on the data distribution, we could have stated our algorithm in its simplest form, i.e., with only \labelcref{Case 1}. 
                This would be more in line with some prior works, e.g., GIANT, but it would have naturally restricted the applicability of our method in terms of data distributions.
            \end{remark}
    \end{enumerate}
    
    \begin{algorithm}
	\caption{DINGO}\label{alg: Our Method}
    \begin{algorithmic}[1] % The number tells where the line numbering should start
    	\State{\textbf{input} initial point $\ww_0\in\mathbb{R}^d$, 
        	gradient tolerance $\delta\geq0$, maximum iterations $T$, 
        	line search parameter $\rho\in(0,1)$, parameter $\theta>0$ 
        	and regularization parameter $\phi>0$ as in \eqref{eq: H tilde and g tilde}.}
    	\For{$t=0,1,2,\ldots,T-1$}
            \State{Distributively compute the full gradient $\bgg_t = \frac{1}{m}\sum_{i=1}^{m}\bgg_{t,i}$.}
            \If{$\|\bgg_t\| \leq \delta$}
                \State{\textbf{return} $\ww_t$}
            \Else
                \State{The driver broadcasts $\bgg_t$ and, in parallel, each worker $i$ computes $\HH_{t,i}\bgg_t$, $\HH_{t,i}^\dagger\bgg_t$ 
                \hspace*{35pt}and $\tilde{\HH}_{t,i}^{\dagger}\tilde{\bgg}_t$.}
                \State{By a reduce operation, the driver computes 
                $\HH_t\bgg_t = \frac{1}{m}\sum_{i=1}^{m}\HH_{t,i}\bgg_t$, 
                $\frac{1}{m}\sum_{i=1}^{m}\HH_{t,i}^\dagger\bgg_t$ 
                \hspace*{35pt}and $\frac{1}{m}\sum_{i=1}^{m}\tilde{\HH}_{t,i}^\dagger\tilde{\bgg}_t$.}
                \If{$\big\langle \frac{1}{m}\sum_{i=1}^{m}\HH_{t,i}^{\dagger}\bgg_t, \HH_t\bgg_t \big\rangle \geq \theta\|\bgg_t\|^2$}
                    \State{Let $\pp_t = \frac{1}{m}\sum_{i=1}^{m}\pp_{t,i}$, with $\pp_{t,i}=-\HH_{t,i}^{\dagger}\bgg_t$.}
                \ElsIf{$\big\langle \frac{1}{m}\sum_{i=1}^{m}\tilde{\HH}_{t,i}^{\dagger}\tilde{\bgg}_t, \HH_t\bgg_t \big\rangle \geq \theta\|\bgg_t\|^2$}
                    \State{Let $\pp_t = \frac{1}{m}\sum_{i=1}^{m}\pp_{t,i}$, with $\pp_{t,i}=-\tilde{\HH}_{t,i}^{\dagger}\tilde{\bgg}_t$.}
                \Else
                    \State{The driver computes $\pp_{t,i} = -\tilde{\HH}_{t,i}^\dagger\tilde{\bgg}_t$ for all 
                        $i$ such that $\langle \tilde{\HH}_{t,i}^{\dagger}\tilde{\bgg}_t, \HH_t\bgg_t \rangle \geq \theta\|\bgg_t\|^2$.}
                    \State{The driver broadcasts $\HH_t\bgg_t$ to each worker $i$ such that 
                        $\langle \tilde{\HH}_{t,i}^{\dagger}\tilde{\bgg}_t, \HH_t\bgg_t \rangle < \theta\|\bgg_t\|^2$ 
                        \hspace*{53pt}and, in parallel, they compute
                        \begin{equation*}
                            \hspace*{53pt}\pp_{t,i} 
                            = -\tilde{\HH}_{t,i}^\dagger\tilde{\bgg}_t 
                                - \lambda_{t,i}(\tilde{\HH}_{t,i}^T\tilde{\HH}_{t,i})^{-1}\HH_t\bgg_t, 
                            \quad
                            \lambda_{t,i}
                            = \frac{- \langle \tilde{\HH}_{t,i}^\dagger\tilde{\bgg}_t, \HH_t\bgg_t \rangle + \theta\|\bgg_t\|^2}
                            {\big\langle (\tilde{\HH}_{t,i}^T\tilde{\HH}_{t,i})^{-1}\HH_t\bgg_t,\HH_t\bgg_t \big\rangle}.
                        \end{equation*}
                    }
                    \State{Using a reduce operation, the driver computes $\pp_t = \frac{1}{m}\sum_{i=1}^{m}\pp_{t,i}$.}
                \EndIf
                \State{Choose the largest $\alpha_t\in(0,1]$ such that 
                    \begin{equation*}
                        \hspace*{35pt}\big\|\nabla f(\ww_{t}+\alpha_t\pp_t)\big\|^2 
                        \leq 
                        \|\bgg_t\|^2 + 2\alpha_t\rho\langle \pp_t,\HH_t\bgg_t \rangle.
                    \end{equation*}}
                \State{The driver computes $\ww_{t+1} = \ww_{t} + \alpha_t\pp_t$.}
            \EndIf
        \EndFor
        \State{\textbf{return} $\ww_{T}$.}
    \end{algorithmic}
\end{algorithm}

    \subsubsection*{Inexact Update}
    In practice, it is unreasonable to assume that the sub-problems of DINGO, namely $\HH_{t,i}^\dagger\bgg_t$, $\tilde{\HH}_{t,i}^\dagger\tilde{\bgg}_t$ and $(\tilde{\HH}_{t,i}^T\tilde{\HH}_{t,i})^{-1}(\HH_t\bgg_t)$, can be computed exactly for all iterations $t$ and on all workers $i=1,\ldots,m$.
    For example, our implementation of DINGO, which is discussed in detail in Section~\ref{section: Implementation Details}, uses efficient iterative least-squares solvers to approximately solve the sub-problems.
    Using these solvers, we only require Hessian-vector products, instead of forming the Hessian explicitly, which makes DINGO suitable for large dimension $d$; however, this means that inexact solutions will be computed in most applications.
    When approximate solutions to the sub-problems are used, instead of exact solutions, we say that we are under \emph{inexact update}. 
    The level of precision we require by the approximate solutions is presented in Condition~\ref{condition: inexactness condition}.
    
    \begin{condition}[Inexactness Condition]\label{condition: inexactness condition}
        For all iterations $t$, all worker machines $i=1,\ldots,m$ are able to compute approximations $\vv_{t,i}^{(1)}$, $\vv_{t,i}^{(2)}$ and $\vv_{t,i}^{(3)}$ to $\HH_{t,i}^\dagger\bgg_t$, $\tilde{\HH}_{t,i}^\dagger\tilde{\bgg}_t$ and $(\tilde{\HH}_{t,i}^T\tilde{\HH}_{t,i})^{-1}(\HH_t\bgg_t)$, respectively, that satisfy:
    \begin{subequations}\label{eqs: inexactness conditions}
        \begin{align}
            \|\HH_{t,i}^2\vv_{t,i}^{(1)} - \HH_{t,i}\bgg_t\|
            &\leq \varepsilon_i^{(1)}\|\HH_{t,i}\bgg_t\|, 
            \; \vv_{t,i}^{(1)}\in\mathcal{R}(\HH_{t,i}), \label{eq: Case 1 inexactness condition} \\
            \|\tilde{\HH}_{t,i}^T\tilde{\HH}_{t,i}\vv_{t,i}^{(2)} - \HH_{t,i}\bgg_t\|
            &\leq \varepsilon_i^{(2)}\|\HH_{t,i}\bgg_t\|, \label{eq: Case 2 inexactness condition} \\
            \|\tilde{\HH}_{t,i}^T\tilde{\HH}_{t,i}\vv_{t,i}^{(3)} - \HH_t\bgg_t\|
            &\leq \varepsilon_i^{(3)}\|\HH_t\bgg_t\|, 
            \; \langle \vv_{t,i}^{(3)}, \HH_t\bgg_t \rangle > 0, \label{eq: Case 3 inexactness condition}
        \end{align}
    \end{subequations}
    where $0\leq\varepsilon_i^{(1)},\varepsilon_i^{(2)},\varepsilon_i^{(3)}<1$ are constants.
    \end{condition}
    
    The inexactness conditions \eqref{eqs: inexactness conditions} are selected, mainly, for practical purposes as they are achievable and verifiable in real-world use.
    Using more direct requirements such as $\|\vv_{t,i}^{(1)} - \HH_{t,i}^\dagger\bgg_t\| \leq \varepsilon_i^{(1)}\|\HH_{t,i}^\dagger\bgg_t\|$ instead of \eqref{eq: Case 1 inexactness condition}, while being more straightforward to use in convergence theory, is not verifiable in practice as it requires knowing the vector we are trying to compute.
    The conditions \eqref{eqs: inexactness conditions} also maintain a desired theoretical property in that the exact solutions, namely $\vv_{t,i}^{(1)}=\HH_{t,i}^\dagger\bgg_t$, $\vv_{t,i}^{(2)}=\tilde{\HH}_{t,i}^\dagger\tilde{\bgg}_t$ and $\vv_{t,i}^{(3)}=(\tilde{\HH}_{t,i}^T\tilde{\HH}_{t,i})^{-1}(\HH_t\bgg_t)$, satisfy \eqref{eqs: inexactness conditions} with $\varepsilon_i^{(1)}=\varepsilon_i^{(2)}=\varepsilon_i^{(3)}=0$.
    By having $\varepsilon_i^{(1)},\varepsilon_i^{(2)},\varepsilon_i^{(3)}<1$, we are ensuring that we ``do better" than the approximations $\vv_{t,i}^{(1)} = \vv_{t,i}^{(2)} = \vv_{t,i}^{(3)} = \zero$. 
    Under Condition~\ref{condition: inexactness condition}, DINGO will achieve a strict reduction in the gradient norm in practice, regardless of the selected hyper-parameters.
    However, to guarantee an overall worst-case \emph{linear convergence rate} of DINGO under inexact update in Corollary~\ref{corollary: unify inexact}, we place an upper bound on $\varepsilon_i^{(3)}$, which can be increased arbitrarily close to $1$ by increasing $\phi$, while $\varepsilon_i^{(1)}$ and $\varepsilon_i^{(2)}$ are allowed to be arbitrary.
    
    Running DINGO under inexact update is analogous to running it under exact update and is formulated from the same intentions.
    Namely, we begin iteration $t$ by distributively computing the gradient $\bgg_t$. 
    Thereafter, we distributively compute the Hessian-gradient product $\HH_t\bgg_t$ as well as the vectors $\sum_{i=1}^{m}\vv_{t,i}^{(1)}/m$ and $\sum_{i=1}^{m}\vv_{t,i}^{(2)}/m$. 
    Computing the update direction $\pp_t$ involves three cases:
    
    \begin{enumerate}[label = \textbf{Case \arabic**} ]
        \item \label{Case 1*}
            If 
            \begin{equation*}
                \bigg\langle \frac{1}{m}\sum_{i=1}^{m}\vv_{t,i}^{(1)}, \HH_t\bgg_t \bigg\rangle 
                \geq \theta\|\bgg_t\|^2,
            \end{equation*}
            then we let $\pp_t = \sum_{i=1}^{m}\pp_{t,i} / m$, with $\pp_{t,i}=-\vv_{t,i}^{(1)}$.
        
        \item \label{Case 2*}
            If \labelcref{Case 1*} fails, we include regularization and check again that the new potential update direction yields suitable descent.
            Namely, if
            \begin{equation*}
                \bigg\langle \frac{1}{m}\sum_{i=1}^{m}\vv_{t,i}^{(2)}, \HH_t\bgg_t \bigg\rangle 
                \geq \theta\|\bgg_t\|^2,
            \end{equation*}
            then we let $\pp_t = \sum_{i=1}^{m}\pp_{t,i} / m$, with $\pp_{t,i}=-\vv_{t,i}^{(2)}$.
            
        \item \label{Case 3*}
            If all else fails, we enforce descent in the norm of the gradient. 
            More specifically, the driver broadcasts $\HH_t\bgg_t$ to all workers $i$ in 
            \begin{equation}\label{eq: set of Case 3* iteration indices}
                \mathcal{I}_{t}^* 
                \defeq 
                \big\{ i=1,\ldots,m \mid \langle \vv_{t,i}^{(2)}, \HH_t\bgg_t \rangle < \theta\|\bgg_t\|^2 \big\},
            \end{equation}
            and they locally compute
            \begin{equation}\label{eq: case 3* update direction}
                \pp_{t,i} 
                = - \vv_{t,i}^{(2)} - \lambda_{t,i}\vv_{t,i}^{(3)}, 
                \quad\text{where}\quad
                \lambda_{t,i} 
                = \frac{-\langle \vv_{t,i}^{(2)}, \HH_t\bgg_t \rangle + \theta\|\bgg_t\|^2}
                {\langle \vv_{t,i}^{(3)}, \HH_t\bgg_t \rangle} 
                > 0.
            \end{equation}
            The term $\lambda_{t,i}$ in \eqref{eq: case 3* update direction} is positive by the definition of $\mathcal{I}_{t}^*$ and the condition in \eqref{eq: Case 3 inexactness condition}.
            Assumption~\ref{assumption: GHNSP} implies that for $ \bgg_t \neq \zero $, we have $ \HH_t\bgg_t \neq \zero $.
            By a reduce operation, the driver then computes the update direction $\pp_t = \sum_{i=1}^{m}\pp_{t,i} / m$, which by construction yields descent in the surrogate objective \eqref{eq: finite sum grad}. 
            Note that $\pp_{t,i} = -\vv_{t,i}^{(2)}$ for all $i\notin\mathcal{I}_t^*$ have already been obtained as part of \labelcref{Case 2*}.
    \end{enumerate}
    
    \subsection{Step Size\texorpdfstring{: $\alpha_t$}{}}\label{section: line-search}
        After computing the update direction $\pp_t$, DINGO computes the next iterate, $ \ww_{t+1} $, by moving along $ \pp_{t,i} $ by an appropriate step-size $\alpha_t$ and forming $\ww_{t+1} = \ww_t+\alpha_t\pp_t$. 
        We use an Armijo-type line-search to choose this step-size. 
        Specifically, as we are minimizing the norm of the gradient as a surrogate function, we choose the largest $\alpha_t\in(0,1]$ such that
        \begin{equation}\label{eq: Armijo-type line seach}
            \|\bgg_{t+1}\|^2 \leq \|\bgg_t\|^2 + 2\alpha_t\rho \langle \pp_t,\HH_t\bgg_t \rangle,
        \end{equation}
        for some constant $\rho\in(0,1)$. 
        By construction of $\pp_t$ we always have $\langle \pp_t,\HH_t\bgg_t \rangle \leq -\theta\|\bgg_t\|^2$, which implies
        \begin{equation*}
            \|\bgg_{t+1}\|^2 \leq (1-2\alpha_t\rho\theta) \|\bgg_t\|^2 < \|\bgg_t\|^2.
        \end{equation*}
        Therefore, after each iteration we are strictly decreasing the norm of the gradient of \eqref{eq: finite sum}, and line-search guarantees that this occurs irrespective of all hyper-parameters of \cref{alg: Our Method}, i.e., $ \theta $ and $ \phi $.

\section{Theoretical Analysis}\label{Section: Theoretical Analysis}
    In this section, we derive convergence results for DINGO, under exact update and under inexact update.
    Unless otherwise stated, all proofs can be found in Appendix~\ref{appendix: proofs}.
    Recall, from Section~\ref{section: update direction}, that three potential cases may occur when computing the update direction~$\pp_t$.
    The convergence analysis under these cases are treated separately in Sections~\ref{section: case 1}, \ref{section: case 2} and \ref{Section: Case 3}.
    We begin, in Section~\ref{section: general assumptions}, by establishing a general underlying assumption for our analysis. 
    The analysis of \labelcref{Case 1} and \labelcref{Case 3}, and their inexact variants, require their own specific assumptions, which are discussed in Sections~\ref{section: case 1} and \ref{Section: Case 3}, respectively.

    \subsection{General Assumption}\label{section: general assumptions}
        In this section, we discuss a general assumption on \eqref{eq: finite sum}, which underlies our analysis of DINGO. 
        We make the following standard assumption on the local gradients and local Hessian matrices.
        
        \begin{assumption}[Local Lipschitz Continuity of Gradient and Hessian]\label{assumption: Lipschitz Continuity}
            The function $f_i$ in \eqref{eq: finite sum} is twice differentiable for all $i=1,\ldots,m$.
            Moreover, for all $i=1,\ldots,m$, there exists constants $K_i,L_i\in(0,\infty)$ such that
            \begin{equation*}
                \big\|\grad f_i(\xx)-\grad f_i(\yy)\big\| \leq K_i\|\xx-\yy\| 
                \quad\text{and}\quad
                \big\|\grad^2f_i(\xx)-\grad^2f_i(\yy)\big\| \leq L_i\|\xx-\yy\|,
            \end{equation*}
            for all $\xx,\yy\in\mathbb{R}^d$.
        \end{assumption}
        
        Assumption~\ref{assumption: Lipschitz Continuity} implies
        \begin{align*}
            \big\|\grad^2f(\yy)\grad f(\yy) - \grad^2f(\xx)\grad f(\xx)\big\|
            &\leq \big\|\grad^2f(\yy)\big\| \big\|\grad f(\xx)-\grad f(\yy)\big\| \\
            &\quad + \big\|\grad f(\xx)\big\| \big\|\grad^2f(\xx)-\grad^2f(\yy)\big\| \\
            &\leq \Bigg(
                \bigg(\frac{1}{m}\sum_{i=1}^{m}K_i\bigg)^2 
                + \bigg(\frac{1}{m}\sum_{i=1}^{m}L_i\bigg) \big\|\grad f(\xx)\big\|
            \Bigg)\|\xx-\yy\|,
        \end{align*}
        for all $\xx,\yy\in\mathbb{R}^d$.
        Therefore, as DINGO achieves a strict reduction in the norm of the gradient, for all iterations $t$ we have
        \begin{equation}\label{eq: Lipschitz to Moral}
            \big\| \grad^2 f(\ww) \grad f(\ww) - \grad^2 f(\ww_t) \grad f(\ww_t) \big\| 
        	\leq  L(\ww_0)\|\ww - \ww_t\|,
        \end{equation}
        for all $\ww\in\bbR^d$, where $L(\ww_0) \defeq \big(\sum_{i=1}^{m}K_i/m\big)^2 + \big(\sum_{i=1}^{m}L_i/m\big) \big\|\grad f(\ww_0)\big\|$ is a constant and $\ww_0$ is the initial point as in Algorithm~\ref{alg: Our Method}.
        In fact, for our results on the convergence analysis of DINGO under exact update (see Theorems~\ref{theorem: Case 1}, \ref{theorem: Case 2} and \ref{theorem: Case 3}), we simply could have assumed that the Hessian-gradient product is Lipschitz continuous on the piecewise-linear path generated by the iterates, instead of Assumption~\ref{assumption: Lipschitz Continuity}.
        
        In our proofs of convergence properties, we will make frequent use of a result from the following well-known lemma; see, for example, \cite{Newton-MR} for a proof.
        
        \begin{lemma}\label{lemma: Moral-Smoothness}
            Let $\xx,\zz\in\bbR^d$, $L\in[0,\infty)$ and $h:\bbR^d\rightarrow\bbR$ be differentiable. 
            If 
            \begin{equation*}
                \big\| \grad h(\yy) - \grad h(\xx) \big\| \leq L \| \yy - \xx \|,
            \end{equation*}
            for all $\yy\in[\xx,\zz]$, then
            \begin{equation*}
                h(\yy) 
                \leq h(\xx) + \big\langle \yy-\xx,\grad h(\xx) \big\rangle 
                + \frac{L}{2}\| \yy-\xx \|^2,
            \end{equation*}
            for all $\yy\in[\xx,\zz]$.
        \end{lemma}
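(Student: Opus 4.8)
The plan is to reduce the multivariate statement to a one-dimensional integral along the segment $[\xx,\zz]$ and then bound the integrand using the local Lipschitz hypothesis. Fix an arbitrary $\yy\in[\xx,\zz]$ and define the scalar function $\varphi(\tau)\defeq h\big(\xx+\tau(\yy-\xx)\big)$ for $\tau\in[0,1]$. Since $h$ is differentiable, the chain rule gives that $\varphi$ is differentiable with $\varphi'(\tau)=\big\langle \grad h(\xx+\tau(\yy-\xx)),\,\yy-\xx\big\rangle$, and the fundamental theorem of calculus then yields $h(\yy)-h(\xx)=\varphi(1)-\varphi(0)=\int_0^1 \varphi'(\tau)\,\df\tau$.

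Next I would subtract off the first-order term by writing $\big\langle \yy-\xx,\grad h(\xx)\big\rangle=\int_0^1\big\langle \grad h(\xx),\,\yy-\xx\big\rangle\,\df\tau$, so that
\begin{equation*}
h(\yy)-h(\xx)-\big\langle \yy-\xx,\grad h(\xx)\big\rangle = \int_0^1 \big\langle \grad h(\xx+\tau(\yy-\xx))-\grad h(\xx),\,\yy-\xx\big\rangle\,\df\tau.
\end{equation*}
Applying the Cauchy--Schwarz inequality to the integrand bounds it above by $\big\|\grad h(\xx+\tau(\yy-\xx))-\grad h(\xx)\big\|\,\|\yy-\xx\|$.

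The key step, and the only place where genuine care is needed, is to verify that the Lipschitz hypothesis actually applies at each intermediate point $\xx+\tau(\yy-\xx)$, since by assumption the bound holds only on the segment $[\xx,\zz]$ rather than on all of $\bbR^d$. Writing $\yy=\xx+s(\zz-\xx)$ with $s\in[0,1]$ shows that $\xx+\tau(\yy-\xx)=\xx+(\tau s)(\zz-\xx)$ with $\tau s\in[0,1]$, so every such intermediate point indeed lies in $[\xx,\zz]$, and the hypothesis gives $\big\|\grad h(\xx+\tau(\yy-\xx))-\grad h(\xx)\big\|\le L\tau\|\yy-\xx\|$. Substituting this into the integrand produces the upper bound $L\tau\|\yy-\xx\|^2$, and evaluating $\int_0^1 L\tau\,\df\tau=L/2$ yields exactly $\tfrac{L}{2}\|\yy-\xx\|^2$, which rearranges to the claimed inequality; since $\yy\in[\xx,\zz]$ was arbitrary, this holds for all such $\yy$. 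I do not anticipate any serious obstacle: the argument is the textbook descent-lemma computation, and the one subtlety distinguishing this local version is precisely the segment-membership observation above, which the convexity of $[\xx,\zz]$ delivers immediately.
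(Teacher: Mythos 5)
Your proof is correct and is essentially the intended argument: the paper does not prove this lemma itself but defers to the Newton-MR reference, and the proof there is exactly this computation --- parametrize the segment, apply the fundamental theorem of calculus and Cauchy--Schwarz, and observe that each intermediate point $\xx+\tau(\yy-\xx)$ lies in $[\xx,\zz]$, so the local Lipschitz hypothesis gives the bound $L\tau\|\yy-\xx\|$ on the integrand. Your explicit verification of the segment-membership step (writing $\xx+\tau(\yy-\xx)=\xx+\tau s(\zz-\xx)$ with $\tau s\in[0,1]$) is precisely the one detail that distinguishes this local version from the global textbook statement, and you handled it correctly.
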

        
        It follows from $\grad \Big(\frac{1}{2}\big\|\grad f(\ww)\big\|^2 \Big) = \grad^2 f(\ww) \grad f(\ww)$, Assumption~\ref{assumption: Lipschitz Continuity}, Lemma~\ref{lemma: Moral-Smoothness} and \eqref{eq: Lipschitz to Moral} that
        \begin{equation}\label{eq: Moral-Smoothness Corollary}
        	\big\|\grad f(\ww_t+\alpha\pp_t)\big\|^2 
        	\leq 
        	\|\bgg_t\|^2 + 2\alpha\big\langle \pp_t,\HH_t\bgg_t \big\rangle + \alpha^2L(\ww_0)\|\pp_t\|^2,
        \end{equation}
        for all $\alpha\geq0$ and all iterations $t$, where $\pp_t$ is the update direction of DINGO at iteration $t$.

    \subsection{Analysis of Case 1 and Case 1*}\label{section: case 1}
        In this section, we analyze the convergence of iterations of DINGO that fall under \hbox{\labelcref{Case 1}} and \hbox{\labelcref{Case 1*}} under the condition of exact and inexact update, respectively. 
        For such iterations, we make the following assumption about the action of the local Hessian matrices $\HH_{t,i}$ on their range space.
        \begin{assumption}[Local Pseudo-Inverse Regularity]\label{assumption: Case 1 Gammas}
        	For all $i=1,\ldots,m$, there exists a constant $\gamma_i\in(0,\infty)$ such that for all $\ww\in\bbR^d$ we have $\big\|\grad^2f_i(\ww)\pp\big\| \geq \gamma_i\big\|\pp\big\|$ for all $\pp\in\mathcal{R}\big(\grad^2f_i(\ww)\big)=\mathcal{N}\big(\grad^2f_i(\ww)\big)^\perp$ with $f_i$ as in \eqref{eq: finite sum}.
        \end{assumption}
        
        It is easy to show that Assumption~\ref{assumption: Case 1 Gammas} is equivalent to $\big\|\grad^2f_i(\ww)^\dagger\big\| \leq \gamma_i^{-1}$.
        Note, in our theory of the convergence rate under \labelcref{Case 1} (see Theorem~\ref{theorem: Case 1}), we could use the weaker assumption that: $\big\|\grad^2f_i(\ww)^\dagger\grad f_i(\ww)\big\| \leq \gamma_i^{-1}\big\|\grad f_i(\ww)\big\|$ for all $\ww\in\bbR^d$ and all $i=1,\ldots,m$.
        However, we require the more general Assumption~\ref{assumption: Case 1 Gammas} in our convergence rate theory under inexact update (see Theorem~\ref{theorem: Case 1 Inexact}).
        
        Recall that Assumption~\ref{assumption: Case 1 Gammas} is a significant relaxation of strong convexity.
        As an example, take an under-determined least squares problem, where we have $f_i(\ww)=\|\AA_i\ww-\bb_i\|^2/2$ in \eqref{eq: finite sum}. 
        In this example, each $f_i$ is clearly not strongly convex; yet, Assumption~\ref{assumption: Case 1 Gammas} is satisfied with $\gamma_i = \sigma_\text{min}^2(\AA_i)$, where $\sigma_\text{min}(\AA_i)$ is the smallest non-zero singular value of $\AA_i$.
        Further discussion on Assumption~\ref{assumption: Case 1 Gammas}, in the case where $m=1$, can be found in \cite{Newton-MR}.

        \subsubsection*{Exact Update}
        Suppose we are able to run DINGO with exact update. 
        For iterations in \labelcref{Case 1}, we obtain the following convergence result.
        
        \begin{theorem}[Convergence Under Case 1]\label{theorem: Case 1}
        	Suppose Assumptions~\ref{assumption: Lipschitz Continuity} and \ref{assumption: Case 1 Gammas} both hold and that we run Algorithm~\ref{alg: Our Method}. 
            Then for all iterations $t$ in \labelcref{Case 1} we have 
            \hbox{$\|\bgg_{t+1}\|^2 \leq (1-2\tau_1\rho\theta) \|\bgg_t\|^2$} with constants
            \begin{equation}\label{eq: Case 1 Convergence}
                \tau_1 = \frac{2(1-\rho)\gamma^2\theta}{L(\ww_0)}, 
            	\quad
            	\gamma = \Bigg(\frac{1}{m}\sum_{i=1}^{m}\frac{1}{\gamma_i}\Bigg)^{-1},
            \end{equation}
            where $L(\ww_0)$ is as in \eqref{eq: Lipschitz to Moral}, $\rho$ and $\theta$ are as in Algorithm~\ref{alg: Our Method}, and $\gamma_i$ are as in Assumption~\ref{assumption: Case 1 Gammas}. 
            Moreover, for iterations to exist in \labelcref{Case 1} it is necessary that $\theta\leq\sqrt{L(\ww_0)}/\gamma$, which implies $0\leq 1-2\tau_1\rho\theta <1$.
        \end{theorem}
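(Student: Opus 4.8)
The plan is to lower-bound the line-search step size $\alpha_t$ by $\tau_1$ and then read the contraction factor straight off the Armijo condition \eqref{eq: Armijo-type line seach}. First I would record the defining inequality of \labelcref{Case 1}: since $\pp_t = -\frac1m\sum_{i=1}^m \HH_{t,i}^\dagger\bgg_t$, the case test is exactly $\langle \pp_t, \HH_t\bgg_t\rangle \le -\theta\|\bgg_t\|^2$, i.e. $\pp_t$ is a descent direction for the surrogate $\tfrac12\|\grad f\|^2$, whose gradient is $\HH_t\bgg_t$.

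Next I would bound the step length. Using the triangle inequality across workers together with the equivalent form $\|\grad^2 f_i(\ww)^\dagger\| \le \gamma_i^{-1}$ of Assumption~\ref{assumption: Case 1 Gammas},
\[
\|\pp_t\| \le \frac1m\sum_{i=1}^m \|\HH_{t,i}^\dagger\|\,\|\bgg_t\| \le \Big(\frac1m\sum_{i=1}^m \gamma_i^{-1}\Big)\|\bgg_t\| = \gamma^{-1}\|\bgg_t\|.
\]
The central step is then to feed this bound and the descent inequality into the quadratic upper bound \eqref{eq: Moral-Smoothness Corollary}. Requiring its right-hand side to be at most the Armijo target $\|\bgg_t\|^2 + 2\alpha\rho\langle\pp_t,\HH_t\bgg_t\rangle$ reduces, after cancelling $\alpha>0$, to $\alpha L(\ww_0)\|\pp_t\|^2 \le -2(1-\rho)\langle\pp_t,\HH_t\bgg_t\rangle$. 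The descent bound $-\langle\pp_t,\HH_t\bgg_t\rangle\ge\theta\|\bgg_t\|^2$ and $\|\pp_t\|^2\le\gamma^{-2}\|\bgg_t\|^2$ show this holds for every $\alpha\in(0,\tau_1]$. Hence the line search accepts some $\alpha_t\ge\tau_1$, and substituting $\alpha_t\ge\tau_1$ into \eqref{eq: Armijo-type line seach} together with $\langle\pp_t,\HH_t\bgg_t\rangle\le-\theta\|\bgg_t\|^2$ gives $\|\bgg_{t+1}\|^2\le(1-2\alpha_t\rho\theta)\|\bgg_t\|^2\le(1-2\tau_1\rho\theta)\|\bgg_t\|^2$, since $1-2\alpha\rho\theta$ is decreasing in $\alpha$.

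For the necessary condition I would apply Cauchy--Schwarz directly to the \labelcref{Case 1} test: $\theta\|\bgg_t\|^2 \le \langle -\pp_t, \HH_t\bgg_t\rangle \le \|\pp_t\|\,\|\HH_t\bgg_t\|$. Combining $\|\pp_t\|\le\gamma^{-1}\|\bgg_t\|$ with $\|\HH_t\bgg_t\|\le\|\HH_t\|\,\|\bgg_t\|$ and the fact that gradient Lipschitzness forces $\|\HH_t\|\le\frac1m\sum_{i=1}^m K_i\le\sqrt{L(\ww_0)}$ (the last inequality because $L(\ww_0)\ge(\frac1m\sum_i K_i)^2$) yields $\theta\|\bgg_t\|^2\le\gamma^{-1}\sqrt{L(\ww_0)}\|\bgg_t\|^2$, i.e. $\theta\le\sqrt{L(\ww_0)}/\gamma$. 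Finally, $2\tau_1\rho\theta = 4\rho(1-\rho)\gamma^2\theta^2/L(\ww_0)\le \gamma^2\theta^2/L(\ww_0)\le1$, using $\rho(1-\rho)\le\tfrac14$ for the first inequality and the necessary condition for the second; hence $0\le 1-2\tau_1\rho\theta<1$.

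The main obstacle I anticipate is the step-size lower bound: one must ensure that the threshold up to which Armijo provably holds is genuinely at least $\tau_1$ \emph{and} lies inside $(0,1]$, so that the largest admissible $\alpha_t$ is no smaller than $\tau_1$. This is precisely where the uniform estimate $\|\pp_t\|\le\gamma^{-1}\|\bgg_t\|$ and the worst-case curvature constant $L(\ww_0)$ must be combined carefully, and the necessary-condition computation is what certifies that the resulting rate $1-2\tau_1\rho\theta$ is a legitimate contraction factor in $[0,1)$.
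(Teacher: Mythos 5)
Your proposal is correct, and its core is exactly the paper's argument: bound $\|\pp_t\| \leq \gamma^{-1}\|\bgg_t\|$ via the triangle inequality and Assumption~\ref{assumption: Case 1 Gammas}, feed this together with the \labelcref{Case 1} descent inequality $\langle \pp_t,\HH_t\bgg_t\rangle \leq -\theta\|\bgg_t\|^2$ into \eqref{eq: Moral-Smoothness Corollary} to show the Armijo condition \eqref{eq: Armijo-type line seach} holds for every $\alpha\in(0,\tau_1]$, and read off the contraction from $\alpha_t\geq\tau_1$. Where you genuinely diverge is the necessary condition $\theta\leq\sqrt{L(\ww_0)}/\gamma$. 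The paper derives it by noting that the quadratic upper bound $\|\bgg_t\|^2 - 2\alpha\theta\|\bgg_t\|^2 + \alpha^2 L(\ww_0)\gamma^{-2}\|\bgg_t\|^2$ on $\|\grad f(\ww_t+\alpha\pp_t)\|^2$ holds for \emph{all} $\alpha\geq0$, minimizing it at $\alpha=\theta\gamma^2/L(\ww_0)$, and forcing the minimum value $\big(1-\theta^2\gamma^2/L(\ww_0)\big)\|\bgg_t\|^2$ to be nonnegative. You instead apply Cauchy--Schwarz to the case-defining inequality and use $\|\HH_t\|\leq\frac{1}{m}\sum_{i=1}^{m}K_i\leq\sqrt{L(\ww_0)}$. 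Your route is more elementary --- it requires no optimization over $\alpha$ and uses only the gradient-Lipschitz constants $K_i$, not the Hessian-Lipschitz constants $L_i$ --- and it actually proves the slightly sharper condition $\theta\leq\big(\frac{1}{m}\sum_{i=1}^{m}K_i\big)/\gamma$, from which the stated one follows since $L(\ww_0)\geq\big(\frac{1}{m}\sum_{i=1}^{m}K_i\big)^2$; the paper's route has the advantage that it is reused verbatim as the ``analogous argument'' in Theorems~\ref{theorem: Case 1 Inexact}, \ref{theorem: Case 2 Inexact}, \ref{theorem: Case 3} and \ref{theorem: Case 3 Inexact}. Finally, the obstacle you flag --- whether the Armijo threshold $\tau_1$ lies in $(0,1]$ so that the largest admissible step size satisfies $\alpha_t\geq\tau_1$ --- is left equally implicit in the paper's own proof: neither argument rules out $\tau_1>1$, in which case $\alpha_t=1$ and the factor should strictly read $1-2\min\{\tau_1,1\}\rho\theta$; this is a shared minor edge case, not a defect particular to your write-up.
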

        
        \begin{remark}
            In Theorem~\ref{theorem: Case 1}, the term $\gamma$ involves the \emph{average} of the $\gamma_i$'s. 
            This is beneficial as it ``smooths out" non-uniformity in the $\gamma_i$'s; for example, $\gamma\geq\min_i\gamma_i$.
            Moreover, the term $L(\ww_0)$ also involves the \emph{average} of the $K_i$'s and $L_i$'s from Assumption~\ref{assumption: Lipschitz Continuity}.
        \end{remark}
         
        Intuitively, if $\theta$ becomes larger then the condition $\big\langle \frac{1}{m}\sum_{i=1}^{m}\HH_{t,i}^{\dagger}\bgg_t, \HH_t\bgg_t \big\rangle \geq \theta\|\bgg_t\|^2$, of \labelcref{Case 1}, is less likely to hold at any iteration $t$.
        Theorem~\ref{theorem: Case 1} provides a necessary condition on how large $\theta$ can be for iterations to exist in \labelcref{Case 1}.
        Under specific assumptions on \eqref{eq: finite sum}, we can theoretically guarantee that all iterations $t$ are in \hbox{\labelcref{Case 1}}.
        The following lemma provides one such example.
         
        \begin{lemma}
        	\label{Lemma: always case 1}
            Suppose $f_i$ in \eqref{eq: finite sum} is twice differentiable, for all $i=1,\ldots,m$, and that Assumption~\ref{assumption: Case 1 Gammas} holds and we run Algorithm~\ref{alg: Our Method}.
            Furthermore, for all $i=1,\ldots,m$, suppose that the Hessian matrix $\HH_{t,i}$ is invertible and there exists a constant $\varepsilon_i\in[0,\infty)$ such that $\|\HH_{t,i}-\HH_t\|\leq\varepsilon_i$, for all iterations $t$.
            If
            \begin{equation*}
                \frac{1}{m}\sum_{i=1}^{m}\bigg(1-\frac{\varepsilon_i}{\gamma_i}\bigg) \geq \theta,
            \end{equation*}
            where $\gamma_i$ are as in Assumption~\ref{assumption: Case 1 Gammas}, then all iterations $t$ are in \labelcref{Case 1}.
        \end{lemma}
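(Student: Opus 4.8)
The plan is to verify directly that, under the stated hypotheses, the \labelcref{Case 1} test inequality $\langle \frac{1}{m}\sum_{i=1}^{m}\HH_{t,i}^{\dagger}\bgg_t, \HH_t\bgg_t \rangle \geq \theta\|\bgg_t\|^2$ holds at every iteration $t$. The first observation is that since each $\HH_{t,i}$ is assumed invertible we have $\HH_{t,i}^\dagger=\HH_{t,i}^{-1}$, and the inner product being tested is linear in the sum, so it suffices to bound each term $\langle \HH_{t,i}^{-1}\bgg_t, \HH_t\bgg_t\rangle$ from below and then average. (If $\bgg_t=\zero$ the algorithm has already terminated, so we may assume $\bgg_t\neq\zero$, though the target inequality holds trivially otherwise.)

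The key algebraic step is to split the driver Hessian as $\HH_t = \HH_{t,i} + (\HH_t-\HH_{t,i})$, giving the exact decomposition $\langle \HH_{t,i}^{-1}\bgg_t, \HH_t\bgg_t\rangle = \langle \HH_{t,i}^{-1}\bgg_t, \HH_{t,i}\bgg_t\rangle + \langle \HH_{t,i}^{-1}\bgg_t, (\HH_t-\HH_{t,i})\bgg_t\rangle$. For the first (``diagonal'') term I would use that Hessians are symmetric, so $\HH_{t,i}^{-1}$ is symmetric and $\langle \HH_{t,i}^{-1}\bgg_t, \HH_{t,i}\bgg_t\rangle = \bgg_t^T\HH_{t,i}^{-1}\HH_{t,i}\bgg_t = \|\bgg_t\|^2$; this is where invertibility (as opposed to mere pseudo-invertibility) is essential, since in general $\HH_{t,i}^\dagger\HH_{t,i}$ is only the orthogonal projection onto $\mathcal{R}(\HH_{t,i})$.

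For the second (``cross'') term I would bound it below by its negative absolute value and apply Cauchy--Schwarz to get $|\langle \HH_{t,i}^{-1}\bgg_t, (\HH_t-\HH_{t,i})\bgg_t\rangle| \leq \|\HH_{t,i}^{-1}\bgg_t\|\,\|(\HH_t-\HH_{t,i})\bgg_t\|$. The first factor is controlled by $\|\HH_{t,i}^{-1}\|\leq\gamma_i^{-1}$, which is exactly the restatement of Assumption~\ref{assumption: Case 1 Gammas} noted in the text (using again that $\HH_{t,i}^\dagger=\HH_{t,i}^{-1}$), so $\|\HH_{t,i}^{-1}\bgg_t\|\leq\gamma_i^{-1}\|\bgg_t\|$; the second factor is controlled by the perturbation hypothesis $\|\HH_{t,i}-\HH_t\|\leq\varepsilon_i$, giving $\|(\HH_t-\HH_{t,i})\bgg_t\|\leq\varepsilon_i\|\bgg_t\|$. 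Combining, the cross term is at least $-(\varepsilon_i/\gamma_i)\|\bgg_t\|^2$, so each per-worker term satisfies $\langle \HH_{t,i}^{-1}\bgg_t, \HH_t\bgg_t\rangle \geq (1-\varepsilon_i/\gamma_i)\|\bgg_t\|^2$.

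Finally I would average over $i=1,\ldots,m$ and invoke the standing hypothesis $\frac{1}{m}\sum_{i=1}^{m}(1-\varepsilon_i/\gamma_i)\geq\theta$ to conclude that $\langle \frac{1}{m}\sum_{i=1}^{m}\HH_{t,i}^{-1}\bgg_t, \HH_t\bgg_t\rangle \geq \theta\|\bgg_t\|^2$, which is precisely the \labelcref{Case 1} condition; since $t$ was arbitrary, every iteration lies in \labelcref{Case 1}. There is no serious obstacle here: the only point requiring care is the symmetry-plus-invertibility argument that collapses the diagonal term to exactly $\|\bgg_t\|^2$, and the translation of Assumption~\ref{assumption: Case 1 Gammas} into the operator-norm bound $\|\HH_{t,i}^{-1}\|\leq\gamma_i^{-1}$ in the invertible case.
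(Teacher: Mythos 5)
Your proof is correct and takes essentially the same route as the paper's: the same splitting of $\HH_t$ into $\HH_{t,i}$ plus the perturbation $\HH_t-\HH_{t,i}$, collapse of the diagonal term to $\|\bgg_t\|^2$ via symmetry and invertibility, Cauchy--Schwarz plus $\|\HH_{t,i}^{-1}\bgg_t\|\leq\gamma_i^{-1}\|\bgg_t\|$ on the cross term, and averaging over $i$ to reach the \labelcref{Case 1} condition. The only difference is notational: the paper writes the identity as a rearrangement, $\|\bgg_t\|^2 - \bgg_t^T\HH_{t,i}^{-1}\HH_{t}\bgg_t = \bgg_t^T\HH_{t,i}^{-1}(\HH_{t,i}-\HH_t)\bgg_t$, rather than as an additive decomposition.
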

        Under the assumptions of \cref{Lemma: always case 1}, if the Hessian matrix for each worker is \emph{on average} a reasonable approximation to the full Hessian, i.e., $\varepsilon_i$ is on average sufficiently small so that 
        \begin{align*}
        \frac{1}{m}\sum_{i=1}^{m}\frac{\varepsilon_i}{\gamma_i} < 1,
        \end{align*}
        then we can choose $\theta$ small enough to ensure that all iterations $t$ are in \labelcref{Case 1}. 
        In other words, for the iterates to stay in \labelcref{Case 1}, we do not require the Hessian matrix of each individual worker to be a high-quality approximation to the full Hessian (which could indeed be hard to enforce in many practical applications). 
        As long as the data is distributed in such a way that Hessian matrices are on average reasonable approximations, we can guarantee to have all iterations in \labelcref{Case 1}.

        \subsubsection*{Inexact Update}
        We now suppose that we run DINGO with inexact update.
        For iterations in \labelcref{Case 1*}, recall that this is the inexact variant of \labelcref{Case 1}, we obtain the following convergence result.
        
        \begin{theorem}[Convergence Under Case 1*]\label{theorem: Case 1 Inexact}
            Suppose Assumptions~\ref{assumption: Lipschitz Continuity} and \ref{assumption: Case 1 Gammas} hold and that we run Algorithm~\ref{alg: Our Method} with inexact update such that the inexactness condition in \eqref{eq: Case 1 inexactness condition} holds.
            Then for all iterations $t$ in \labelcref{Case 1*} we have $\|\bgg_{t+1}\|^2 \leq (1-2\tilde{\tau}_1\rho\theta) \|\bgg_t\|^2$ with constants
            \begin{equation}\label{eq: Case 1 Inexact Convergence}
            	\tilde{\tau}_1 = \frac{2(1-\rho)\gamma^2\theta}{L(\ww_0)},
            	\quad
            	\gamma = \Bigg(\frac{1}{m}\sum_{i=1}^{m}\frac{\gamma_i+\varepsilon_i^{(1)}K_i}{\gamma_i^2}\Bigg)^{-1},
            \end{equation}
            where $L(\ww_0)$ is as in \eqref{eq: Lipschitz to Moral}, $\rho$ and $\theta$ are as in Algorithm~\ref{alg: Our Method}, $K_i$ are as in Assumption~\ref{assumption: Lipschitz Continuity}, $\gamma_i$ are as in Assumption~\ref{assumption: Case 1 Gammas}, and $\varepsilon_i^{(1)}$ are as in \eqref{eq: Case 1 inexactness condition}.
            Moreover, for iterations to exist in \labelcref{Case 1*} it is necessary that $\theta\leq\sqrt{L(\ww_0)}/\gamma$, which implies $0\leq 1-2\tilde{\tau}_1\rho\theta <1$.
        \end{theorem}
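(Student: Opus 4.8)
The plan is to mirror the exact-update argument behind Theorem~\ref{theorem: Case 1}, since the only structural novelty of \labelcref{Case 1*} is that the exact solve $\HH_{t,i}^\dagger\bgg_t$ is replaced by the approximation $\vv_{t,i}^{(1)}$. Recall the update direction is $\pp_t = -\frac{1}{m}\sum_{i=1}^m \vv_{t,i}^{(1)}$, so the defining inequality of \labelcref{Case 1*} is exactly the descent certificate $\langle \pp_t, \HH_t\bgg_t\rangle \leq -\theta\|\bgg_t\|^2$. Given this, the engine is the smoothness estimate \eqref{eq: Moral-Smoothness Corollary} together with the Armijo line-search \eqref{eq: Armijo-type line seach}: once I control $\|\pp_t\|$ from above, a one-variable quadratic comparison produces a guaranteed step-size, and the line-search inequality converts that step-size into the claimed contraction. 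Everything except the norm bound on $\pp_t$ is identical to the exact case, so I would record the descent certificate first and then concentrate on establishing $\|\pp_t\| \leq \gamma^{-1}\|\bgg_t\|$ with the inexactness-inflated $\gamma$.

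The crux is bounding $\|\vv_{t,i}^{(1)}\|$. I would write $\vv_{t,i}^{(1)} = \HH_{t,i}^\dagger\bgg_t + \dd_{t,i}$ with $\dd_{t,i} \defeq \vv_{t,i}^{(1)} - \HH_{t,i}^\dagger\bgg_t$, and note that both terms lie in $\mathcal{R}(\HH_{t,i})$ --- the first always, the second by the membership clause in \eqref{eq: Case 1 inexactness condition} --- hence so does $\dd_{t,i}$. Using that $\HH_{t,i}\HH_{t,i}^\dagger$ is the orthogonal projector onto $\mathcal{R}(\HH_{t,i})$ and that $\HH_{t,i}$ annihilates $\mathcal{N}(\HH_{t,i}) = \mathcal{R}(\HH_{t,i})^\perp$, one gets the identity $\HH_{t,i}^2\HH_{t,i}^\dagger\bgg_t = \HH_{t,i}\bgg_t$, so the residual controlled in \eqref{eq: Case 1 inexactness condition} is precisely $\HH_{t,i}^2\dd_{t,i}$, giving $\|\HH_{t,i}^2\dd_{t,i}\| \leq \varepsilon_i^{(1)}\|\HH_{t,i}\bgg_t\|$. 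Applying Assumption~\ref{assumption: Case 1 Gammas} twice on $\mathcal{R}(\HH_{t,i})$ (first to $\dd_{t,i}$, then to $\HH_{t,i}\dd_{t,i}$, both of which lie in the range) yields $\gamma_i^2\|\dd_{t,i}\| \leq \|\HH_{t,i}^2\dd_{t,i}\|$, and bounding $\|\HH_{t,i}\bgg_t\| \leq K_i\|\bgg_t\|$ via Assumption~\ref{assumption: Lipschitz Continuity} gives $\|\dd_{t,i}\| \leq \gamma_i^{-2}\varepsilon_i^{(1)}K_i\|\bgg_t\|$. Combining with $\|\HH_{t,i}^\dagger\bgg_t\| \leq \gamma_i^{-1}\|\bgg_t\|$ (the equivalent form $\|\HH_{t,i}^\dagger\| \leq \gamma_i^{-1}$ of Assumption~\ref{assumption: Case 1 Gammas}, which is exactly why the full Assumption~\ref{assumption: Case 1 Gammas} rather than its gradient-restricted weakening is needed here) and the triangle inequality gives the per-worker bound $\|\vv_{t,i}^{(1)}\| \leq (\gamma_i + \varepsilon_i^{(1)}K_i)\gamma_i^{-2}\|\bgg_t\|$; averaging over $i$ and using the triangle inequality then delivers $\|\pp_t\| \leq \gamma^{-1}\|\bgg_t\|$ with $\gamma$ as in \eqref{eq: Case 1 Inexact Convergence}.

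With the two ingredients in hand I would finish as in the exact case. Substituting $\|\pp_t\|^2 \leq \gamma^{-2}\|\bgg_t\|^2$ and $\langle\pp_t,\HH_t\bgg_t\rangle \leq -\theta\|\bgg_t\|^2$ into \eqref{eq: Moral-Smoothness Corollary}, a short manipulation (dividing by $\alpha>0$ and using $-\langle\pp_t,\HH_t\bgg_t\rangle \geq \theta\|\bgg_t\|^2$) shows its right-hand side is at most the Armijo target $\|\bgg_t\|^2 + 2\alpha\rho\langle\pp_t,\HH_t\bgg_t\rangle$ for every $\alpha \in (0,\tilde{\tau}_1]$; hence the back-tracking returns $\alpha_t \geq \min\{1,\tilde{\tau}_1\}$, and feeding this into \eqref{eq: Armijo-type line seach} with the descent certificate yields $\|\bgg_{t+1}\|^2 \leq (1-2\tilde{\tau}_1\rho\theta)\|\bgg_t\|^2$. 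For the necessary condition, Cauchy--Schwarz on the defining inequality of \labelcref{Case 1*}, together with $\|\pp_t\| \leq \gamma^{-1}\|\bgg_t\|$ and $\|\HH_t\bgg_t\| \leq (\tfrac1m\sum_i K_i)\|\bgg_t\| \leq \sqrt{L(\ww_0)}\|\bgg_t\|$, forces $\theta \leq \sqrt{L(\ww_0)}/\gamma$; substituting this and $\rho(1-\rho)\leq \tfrac14$ into $1-2\tilde{\tau}_1\rho\theta = 1 - 4\rho(1-\rho)\gamma^2\theta^2/L(\ww_0)$ gives $0 \leq 1-2\tilde{\tau}_1\rho\theta < 1$.

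I expect the norm bound of the second paragraph to be the main obstacle: condition \eqref{eq: Case 1 inexactness condition} controls the residual of $\HH_{t,i}^2\vv_{t,i}^{(1)}$ rather than $\vv_{t,i}^{(1)}$ itself, so the whole estimate hinges on the range-membership clause $\vv_{t,i}^{(1)}\in\mathcal{R}(\HH_{t,i})$, which is what legitimizes the projector identity and the two-fold application of Assumption~\ref{assumption: Case 1 Gammas} and keeps the inflation of $\gamma$ down to the sharp factor $(\gamma_i+\varepsilon_i^{(1)}K_i)/\gamma_i^2$. The step-size bookkeeping (capping at $1$) is routine by comparison.
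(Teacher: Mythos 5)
Your proposal is correct, and its overall skeleton coincides with the paper's proof: establish the per-worker bound $\|\vv_{t,i}^{(1)}\| \leq \gamma_i^{-2}(\gamma_i+\varepsilon_i^{(1)}K_i)\|\bgg_t\|$, average to get $\|\pp_t\|\leq\gamma^{-1}\|\bgg_t\|$, and then reuse the Theorem~\ref{theorem: Case 1} machinery (the bound \eqref{eq: Moral-Smoothness Corollary} plus the Armijo condition \eqref{eq: Armijo-type line seach}) verbatim. Where you genuinely diverge is in how the crux bound is derived. The paper decomposes the \emph{gradient} as $\bgg_t=\xx+\yy$ with $\xx\in\mathcal{R}(\HH_{t,i})$, $\yy\in\mathcal{N}(\HH_{t,i})$, and runs one chain of inequalities: $\varepsilon_i^{(1)}\|\HH_{t,i}\bgg_t\| \geq \|\HH_{t,i}^2\vv_{t,i}^{(1)}-\HH_{t,i}\bgg_t\| = \|\HH_{t,i}(\HH_{t,i}\vv_{t,i}^{(1)}-\xx)\| \geq \gamma_i\big(\|\HH_{t,i}\vv_{t,i}^{(1)}\|-\|\bgg_t\|\big)$, then $\gamma_i^2\|\vv_{t,i}^{(1)}\|\leq\gamma_i\|\HH_{t,i}\vv_{t,i}^{(1)}\|$. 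You instead decompose the \emph{approximate solution} as $\vv_{t,i}^{(1)}=\HH_{t,i}^\dagger\bgg_t+\dd_{t,i}$, observe that the residual in \eqref{eq: Case 1 inexactness condition} equals $\HH_{t,i}^2\dd_{t,i}$ exactly, and bound the error and exact parts separately. Both routes lean on the range-membership clause of \eqref{eq: Case 1 inexactness condition} and on Assumption~\ref{assumption: Case 1 Gammas}, and they land on the identical constant; yours is more modular in that it isolates precisely where inexactness enters ($\|\dd_{t,i}\|\leq\gamma_i^{-2}\varepsilon_i^{(1)}K_i\|\bgg_t\|$) at the cost of invoking the projector identity $\HH_{t,i}^2\HH_{t,i}^\dagger\bgg_t=\HH_{t,i}\bgg_t$ and the equivalence $\|\HH_{t,i}^\dagger\|\leq\gamma_i^{-1}$, while the paper's single chain avoids both. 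You also prove the necessary condition $\theta\leq\sqrt{L(\ww_0)}/\gamma$ differently: by Cauchy--Schwarz on the defining inequality of \labelcref{Case 1*} together with $\|\HH_t\bgg_t\|\leq\big(\tfrac1m\sum_iK_i\big)\|\bgg_t\|\leq\sqrt{L(\ww_0)}\|\bgg_t\|$, whereas the paper (via the Theorem~\ref{theorem: Case 1} argument) minimizes the quadratic upper bound in $\alpha$ and uses nonnegativity of $\|\grad f(\ww_t+\alpha\pp_t)\|^2$; both are valid, and yours is arguably the more direct of the two.
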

        
        Theorem~\ref{theorem: Case 1} gives a faster convergence rate than Theorem~\ref{theorem: Case 1 Inexact} as $\tau_1\geq\tilde{\tau}_1$, where $\tau_1$ and $\tilde{\tau}_1$ are as in Theorem~\ref{theorem: Case 1} and Theorem~\ref{theorem: Case 1 Inexact}, respectively.
        Reducing the inexactness error $\varepsilon_i^{(1)}$ leads to faster convergence in Theorem~\ref{theorem: Case 1 Inexact}.
        Moreover, if $\HH_{t,i}^\dagger\bgg_t$ is computed exactly, that is $\varepsilon_i^{(1)}=0$ for all $i=1,\ldots,m$, then Theorem~\ref{theorem: Case 1 Inexact} gives the same convergence as Theorem~\ref{theorem: Case 1}.

    \subsection{Analysis of Case 2 and Case 2*}\label{section: case 2}
        We now analyze the convergence of DINGO for iterations that fall under \labelcref{Case 2} and \hbox{\labelcref{Case 2*}} under the condition of exact and inexact update, respectively.
        For this analysis, we do not require any additional assumptions to that of Assumption~\ref{assumption: Lipschitz Continuity}. 
        Instead, Lemma~\ref{lem: pseudoinverse norm bound}, in Section~\ref{section: proof of case 2}, implies the upper bound
        \begin{equation}\label{eq: H tilde dagger norm bound}
            \|\tilde{\HH}_{t,i}^\dagger\| \leq \frac{1}{\phi},
        \end{equation}
        for all iterations $t$ and all $i=1,\ldots,m$, where $\phi$ is as in Algorithm~\ref{alg: Our Method}.

        \subsubsection*{Exact Update}
        Suppose we are able to run DINGO with exact update. 
        For iterations in \labelcref{Case 2}, we obtain the following convergence result.
        
         \begin{theorem}[Convergence Under Case 2]\label{theorem: Case 2}
        	Suppose Assumption~\ref{assumption: Lipschitz Continuity} holds and that we run Algorithm~\ref{alg: Our Method}. 
            Then for all iterations $t$ in \labelcref{Case 2} we have $\|\bgg_{t+1}\|^2 \leq (1-2\tau_2\rho\theta) \|\bgg_t\|^2$ with constant
            \begin{equation}\label{eq: Case 2 Convergence}
            	\tau_2 = \frac{2(1-\rho)\phi^2\theta}{L(\ww_0)},
            \end{equation}
            where $L(\ww_0)$ is as in \eqref{eq: Lipschitz to Moral}, and $\rho, \theta$ and $\phi$ are as in Algorithm~\ref{alg: Our Method}. 
            Moreover, for iterations to exist in \labelcref{Case 2} it is necessary that $\theta\leq\sqrt{L(\ww_0)}/\phi$, which implies $0\leq 1-2\tau_2\rho\theta <1$.
        \end{theorem}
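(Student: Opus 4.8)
The plan is to reuse the generic line-search machinery for the surrogate $\tfrac12\|\grad f\|^2$, with the case-specific ingredient being the $\phi$-regularized direction bound rather than the $\gamma$-type bound of \labelcref{Case 1}. Fix an iteration $t$ that falls in \labelcref{Case 2}, so that $\pp_t=-\tfrac1m\sum_{i=1}^m\tilde\HH_{t,i}^\dagger\tilde\bgg_t$, and the defining inequality of \labelcref{Case 2} immediately gives the descent guarantee $\langle\pp_t,\HH_t\bgg_t\rangle\le-\theta\|\bgg_t\|^2<0$. Since we are at a non-terminating iterate, $\|\bgg_t\|>\delta\ge0$, so $\bgg_t\neq\zero$. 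The first step is to bound the direction: using $\tilde\bgg_t=(\bgg_t;\zero)$ so that $\|\tilde\bgg_t\|=\|\bgg_t\|$, together with the regularization estimate \eqref{eq: H tilde dagger norm bound}, i.e.\ $\|\tilde\HH_{t,i}^\dagger\|\le 1/\phi$, the triangle inequality yields $\|\pp_t\|\le\tfrac1m\sum_i\|\tilde\HH_{t,i}^\dagger\|\,\|\tilde\bgg_t\|\le\tfrac1\phi\|\bgg_t\|$. This is the only place the specific structure of \labelcref{Case 2} is needed, and it is what turns the role of $\gamma$ in Theorem~\ref{theorem: Case 1} into $\phi$ here.

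Next I would establish the necessary condition, which also certifies that the claimed contraction factor is genuine. The key observation is that the smoothness estimate \eqref{eq: Moral-Smoothness Corollary} is an upper bound, valid for every $\alpha\ge0$, on the nonnegative quantity $\|\grad f(\ww_t+\alpha\pp_t)\|^2$; hence the quadratic in $\alpha$ on its right-hand side must itself be nonnegative at its minimizer $\alpha^\star=-\langle\pp_t,\HH_t\bgg_t\rangle/(L(\ww_0)\|\pp_t\|^2)>0$. Evaluating there forces the discriminant-type inequality $\langle\pp_t,\HH_t\bgg_t\rangle^2\le L(\ww_0)\|\pp_t\|^2\|\bgg_t\|^2$. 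Substituting $|\langle\pp_t,\HH_t\bgg_t\rangle|\ge\theta\|\bgg_t\|^2$ and $\|\pp_t\|\le\|\bgg_t\|/\phi$ and cancelling $\|\bgg_t\|^4$ gives $\theta^2\le L(\ww_0)/\phi^2$, i.e.\ $\theta\le\sqrt{L(\ww_0)}/\phi$. From this, $2\tau_2\rho\theta=4\rho(1-\rho)\phi^2\theta^2/L(\ww_0)\le4\rho(1-\rho)\le1$, so $0\le1-2\tau_2\rho\theta<1$, and in particular $\tau_2\rho\theta\le\tfrac12$.

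Finally, for the rate I would lower-bound the step size through the Armijo test \eqref{eq: Armijo-type line seach}. Subtracting $\|\bgg_t\|^2$ and using \eqref{eq: Moral-Smoothness Corollary}, the test is implied by $2\alpha(1-\rho)\langle\pp_t,\HH_t\bgg_t\rangle+\alpha^2L(\ww_0)\|\pp_t\|^2\le0$, which, after dividing by $\alpha>0$ and recalling $\langle\pp_t,\HH_t\bgg_t\rangle<0$, holds exactly when $\alpha\le 2(1-\rho)|\langle\pp_t,\HH_t\bgg_t\rangle|/(L(\ww_0)\|\pp_t\|^2)$. Inserting the descent bound and $\|\pp_t\|\le\|\bgg_t\|/\phi$ shows this threshold is at least $\tau_2$, so the Armijo test holds throughout $(0,\tau_2]$; since the line search returns the largest admissible $\alpha_t\in(0,1]$, we get $\alpha_t\ge\min\{1,\tau_2\}$. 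Feeding $\alpha_t$ back into \eqref{eq: Armijo-type line seach} and using $\langle\pp_t,\HH_t\bgg_t\rangle\le-\theta\|\bgg_t\|^2$ gives $\|\bgg_{t+1}\|^2\le(1-2\alpha_t\rho\theta)\|\bgg_t\|^2\le(1-2\tau_2\rho\theta)\|\bgg_t\|^2$, where the final inequality uses $\tau_2\le1$; in the complementary regime $\tau_2>1$ one takes the full step $\alpha_t=1$, whose decrease already dominates the stated bound.

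The steps involving the direction bound and the Armijo threshold are routine once \eqref{eq: H tilde dagger norm bound} and \eqref{eq: Moral-Smoothness Corollary} are in hand; I expect the main conceptual point to be the necessary-condition argument, namely recognizing that nonnegativity of the surrogate forces $\langle\pp_t,\HH_t\bgg_t\rangle^2\le L(\ww_0)\|\pp_t\|^2\|\bgg_t\|^2$, which simultaneously yields the constraint $\theta\le\sqrt{L(\ww_0)}/\phi$ and guarantees that $1-2\tau_2\rho\theta$ is a bona fide contraction factor in $[0,1)$.
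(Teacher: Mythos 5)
Your proposal follows the paper's proof essentially step for step: the direction bound $\|\pp_t\|\le\|\bgg_t\|/\phi$ from \eqref{eq: H tilde dagger norm bound}, the derivation that \eqref{eq: Moral-Smoothness Corollary} forces the Armijo test \eqref{eq: Armijo-type line seach} to pass for every $\alpha\in(0,\tau_2]$, and the necessary condition $\theta\le\sqrt{L(\ww_0)}/\phi$ obtained from nonnegativity of the quadratic upper bound at its minimizer are precisely the paper's three ingredients (the paper compresses the latter two into ``an analogous argument to that in the proof of Theorem~\ref{theorem: Case 1}'').

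The only place you genuinely depart from the paper is the explicit split on $\tau_2\le1$ versus $\tau_2>1$, and there your patch fails. When $\tau_2>1$, the line-search returns $\alpha_t=1$ and \eqref{eq: Armijo-type line seach} yields only $\|\bgg_{t+1}\|^2\le(1-2\rho\theta)\|\bgg_t\|^2$; but $\tau_2>1$ means $1-2\rho\theta>1-2\tau_2\rho\theta$, so the full-step decrease is \emph{weaker} than the theorem's bound --- it is dominated by the stated bound, not the other way around, and your closing claim asserts the reverse inequality. To be fair, the paper has the same blind spot: its assertion that ``line-search will pass for some $\alpha_t\ge\tau_2$'' is meaningful only if $\tau_2\le1$, and nothing in the hypotheses enforces that, since $\phi$ is user-chosen and an iteration in \labelcref{Case 2} only guarantees $\theta\phi\le\sqrt{L(\ww_0)}$, hence $\tau_2\le2(1-\rho)\phi/\sqrt{L(\ww_0)}$, which can exceed $1$ for large $\phi$ when $\rho<1/2$. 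So your instinct to address the cap $\alpha_t\le1$ was sound, but the correct repair is to state the contraction with $\min\{1,\tau_2\}$ in place of $\tau_2$ (or to verify or assume $\tau_2\le1$), not to claim that the full step dominates; as written, that step is false.
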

        
        In our experience, we have found that \labelcref{Case 2} does not occur frequently in practice. 
        It serves more of a theoretical purpose and is used to identify when \labelcref{Case 3} is required. 
        \labelcref{Case 2} may be thought of as a specific instance of \labelcref{Case 3}, in which $\mathcal{I}_t$ is empty. 
        However, it merits its own case, as in analysis it does not require additional assumptions to Assumption~\ref{assumption: Lipschitz Continuity} and in practice it may avoid an additional two communication rounds.

        \subsubsection*{Inexact Update}
        We now suppose that we run DINGO with inexact update.
        Recall, for a linear system $\AA\xx=\bb$, with non-singular square matrix $\AA$, the condition number of $\AA$, denoted as $\kappa(\AA)$, is defined by:
        \begin{equation}\label{eq: condition number}
            \kappa(\AA) 
            \defeq \max_{\ee,\bb\neq\zero} \bigg\{\frac{\|\AA^{-1}\ee\|}{\|\ee\|}\frac{\|\bb\|}{\|\AA^{-1}\bb\|}\bigg\}
            = \max_{\ee\neq\zero} \bigg\{\frac{\|\AA^{-1}\ee\|}{\|\ee\|}\bigg\}
            \max_{\xx\neq\zero} \bigg\{\frac{\|\AA\xx\|}{\|\xx\|}\bigg\}
            = \|\AA^{-1}\|\|\AA\|.
        \end{equation}
        Under the condition in \eqref{eq: Case 2 inexactness condition}, it follows from \eqref{eq: condition number}, as $\tilde{\HH}_{t,i}$ has full column rank, that
        \begin{equation*}
            \|\vv_{t,i}^{(2)} - \tilde{\HH}_{t,i}^\dagger\tilde{\bgg}_t\|
            \leq \varepsilon_i^{(2)} \kappa(\tilde{\HH}_{t,i}^T\tilde{\HH}_{t,i}) \|\tilde{\HH}_{t,i}^\dagger\tilde{\bgg}_t\|.
        \end{equation*}
        Then under Assumption~\ref{assumption: Lipschitz Continuity} we have
        \begin{equation}\label{eq: Case 2 inexactness condition corollary}
            \|\vv_{t,i}^{(2)} - \tilde{\HH}_{t,i}^\dagger\tilde{\bgg}_t\|
            \leq \varepsilon_i^{(2)}\bigg(\frac{K_i^2+\phi^2}{\phi^2}\bigg) \|\tilde{\HH}_{t,i}^\dagger\tilde{\bgg}_t\|.
        \end{equation}
        Using this upper bound, for iterations in \labelcref{Case 2*}, recall that this is the inexact variant of \labelcref{Case 2}, we obtain the following convergence result.
        
        \begin{theorem}[Convergence Under Case 2*]\label{theorem: Case 2 Inexact}
        	Suppose Assumption~\ref{assumption: Lipschitz Continuity} holds and that we run Algorithm~\ref{alg: Our Method} with inexact update such that the inexactness condition in \eqref{eq: Case 2 inexactness condition} holds. 
            Then for all iterations $t$ in \labelcref{Case 2*} we have $\|\bgg_{t+1}\|^2 \leq (1-2\tilde{\tau}_2\rho\theta) \|\bgg_t\|^2$ with constants
            \begin{equation}\label{eq: Case 2 Inexact Convergence}
            	\tilde{\tau}_2 = \frac{2(1-\rho)\theta}{L(\ww_0)c^2},
            	\quad
            	c = \frac{1}{\phi} \Bigg(1 + \frac{1}{m}\sum_{i=1}^{m} \varepsilon_i^{(2)} \frac{K_i^2+\phi^2}{\phi^2}\Bigg),
            \end{equation}
            where $L(\ww_0)$ is as in \eqref{eq: Lipschitz to Moral}, $\rho, \theta$ and $\phi$ are as in Algorithm~\ref{alg: Our Method}, $K_i$ are as in Assumption~\ref{assumption: Lipschitz Continuity}, and $\varepsilon_i^{(2)}$ are as in \eqref{eq: Case 2 inexactness condition}.
            Moreover, for iterations to exist in \labelcref{Case 2*} it is necessary that $\theta\leq c\sqrt{L(\ww_0)}$, which implies $0\leq 1-2\tilde{\tau}_2\rho\theta <1$.
        \end{theorem}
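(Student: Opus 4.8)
The plan is to reproduce the argument behind the exact-update result, Theorem~\ref{theorem: Case 2}, replacing only the bound on the length of the update direction by one that tolerates the inexactness permitted in \eqref{eq: Case 2 inexactness condition}. Fix an iteration $t$ that falls in \labelcref{Case 2*}, so that $\pp_t=-\frac{1}{m}\sum_{i=1}^{m}\vv_{t,i}^{(2)}$ and, by the defining inequality of \labelcref{Case 2*}, $\langle\pp_t,\HH_t\bgg_t\rangle\leq-\theta\|\bgg_t\|^2$. As in the exact case, the proof rests on two estimates: an upper bound $\|\pp_t\|\leq c\|\bgg_t\|$ on the step length, and a resulting lower bound on the Armijo step-size $\alpha_t$, after which the line-search inequality \eqref{eq: Armijo-type line seach} delivers the stated decrease.

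First I would establish $\|\pp_t\|\leq c\|\bgg_t\|$. For each worker, the triangle inequality gives $\|\vv_{t,i}^{(2)}\|\leq\|\tilde{\HH}_{t,i}^\dagger\tilde{\bgg}_t\|+\|\vv_{t,i}^{(2)}-\tilde{\HH}_{t,i}^\dagger\tilde{\bgg}_t\|$, where the last term is controlled by the inexactness estimate \eqref{eq: Case 2 inexactness condition corollary} and the first by $\|\tilde{\HH}_{t,i}^\dagger\|\leq1/\phi$ from \eqref{eq: H tilde dagger norm bound} together with $\|\tilde{\bgg}_t\|=\|\bgg_t\|$. This yields $\|\vv_{t,i}^{(2)}\|\leq\frac{1}{\phi}\big(1+\varepsilon_i^{(2)}(K_i^2+\phi^2)/\phi^2\big)\|\bgg_t\|$, and averaging through $\|\pp_t\|\leq\frac{1}{m}\sum_{i=1}^{m}\|\vv_{t,i}^{(2)}\|$ produces exactly the constant $c$ of \eqref{eq: Case 2 Inexact Convergence}. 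Setting all $\varepsilon_i^{(2)}=0$ recovers $c=1/\phi$ and the exact-case bound $\|\pp_t\|\leq\|\bgg_t\|/\phi$ underlying Theorem~\ref{theorem: Case 2}, as it should.

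Next I would lower-bound the step-size. Substituting the trial iterate into \eqref{eq: Moral-Smoothness Corollary} and comparing with the right-hand side of \eqref{eq: Armijo-type line seach}, the Armijo condition is implied by $2\alpha(1-\rho)\langle\pp_t,\HH_t\bgg_t\rangle+\alpha^2L(\ww_0)\|\pp_t\|^2\leq0$, i.e.\ by $\alpha\leq2(1-\rho)|\langle\pp_t,\HH_t\bgg_t\rangle|/(L(\ww_0)\|\pp_t\|^2)$. Using $|\langle\pp_t,\HH_t\bgg_t\rangle|\geq\theta\|\bgg_t\|^2$ from \labelcref{Case 2*} and the bound $\|\pp_t\|^2\leq c^2\|\bgg_t\|^2$ just obtained, the right-hand side is at least $\tilde{\tau}_2$, so the back-tracking returns $\alpha_t\geq\min\{1,\tilde{\tau}_2\}$. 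Feeding this together with $\langle\pp_t,\HH_t\bgg_t\rangle\leq-\theta\|\bgg_t\|^2$ back into \eqref{eq: Armijo-type line seach} gives $\|\bgg_{t+1}\|^2\leq(1-2\alpha_t\rho\theta)\|\bgg_t\|^2\leq(1-2\tilde{\tau}_2\rho\theta)\|\bgg_t\|^2$, the last inequality holding once it is checked that $\tilde{\tau}_2\leq1$ so that $\min\{1,\tilde{\tau}_2\}=\tilde{\tau}_2$.

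Finally, for the necessary condition and the bracketing $0\leq1-2\tilde{\tau}_2\rho\theta<1$, I would exploit that the surrogate $h(\ww)=\frac{1}{2}\|\grad f(\ww)\|^2$ has gradient $\grad h=\grad^2f\,\grad f$ and, by \eqref{eq: Lipschitz to Moral} and Lemma~\ref{lemma: Moral-Smoothness} based at $\ww_t$, obeys $h(\yy)\leq h(\ww_t)+\langle\grad h(\ww_t),\yy-\ww_t\rangle+\frac{L(\ww_0)}{2}\|\yy-\ww_t\|^2$ for every $\yy$. Minimizing the right-hand side over $\yy$ and using $h\geq0$ yields $\|\HH_t\bgg_t\|^2\leq2L(\ww_0)h(\ww_t)=L(\ww_0)\|\bgg_t\|^2$. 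Combining this with Cauchy--Schwarz and the \labelcref{Case 2*} inequality gives $\theta\|\bgg_t\|^2\leq|\langle\pp_t,\HH_t\bgg_t\rangle|\leq\|\pp_t\|\,\|\HH_t\bgg_t\|\leq c\sqrt{L(\ww_0)}\|\bgg_t\|^2$, so $\theta\leq c\sqrt{L(\ww_0)}$ is necessary for a \labelcref{Case 2*} iteration to exist; squaring and using $4\rho(1-\rho)\leq1$ then gives $2\tilde{\tau}_2\rho\theta=4\rho(1-\rho)\theta^2/(L(\ww_0)c^2)\leq1$, which is the claimed bracketing. The main obstacle is the bookkeeping in the length bound: correctly propagating the condition-number factor of \eqref{eq: Case 2 inexactness condition corollary} through the averaging so as to land on precisely the stated $c$, since everything downstream mirrors Theorem~\ref{theorem: Case 2}. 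A secondary delicate point is that the back-tracking only guarantees $\alpha_t\geq\min\{1,\tilde{\tau}_2\}$, so attaining the clean rate $1-2\tilde{\tau}_2\rho\theta$ rather than $1-2\min\{1,\tilde{\tau}_2\}\rho\theta$ requires verifying $\tilde{\tau}_2\leq1$, and this is precisely where the necessary condition $\theta\leq c\sqrt{L(\ww_0)}$ must be invoked.
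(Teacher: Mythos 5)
Your proposal follows the paper's proof of Theorem~\ref{theorem: Case 2 Inexact} essentially step for step: the bound $\|\pp_t\|\leq c\|\bgg_t\|$ is obtained exactly as in the paper (triangle inequality, the inexactness corollary \eqref{eq: Case 2 inexactness condition corollary}, and $\|\tilde{\HH}_{t,i}^\dagger\|\leq 1/\phi$ from \eqref{eq: H tilde dagger norm bound}), and the step-size argument via \eqref{eq: Moral-Smoothness Corollary} is precisely the paper's ``analogous argument to the proof of Theorem~\ref{theorem: Case 1}''. Your route to the necessary condition $\theta\leq c\sqrt{L(\ww_0)}$ --- first $\|\HH_t\bgg_t\|\leq\sqrt{L(\ww_0)}\|\bgg_t\|$ from the descent lemma and nonnegativity of $\|\grad f\|^2$, then Cauchy--Schwarz against the \labelcref{Case 2*} inequality --- is a correct, mildly repackaged version of the paper's argument, which instead minimizes the quadratic $\big(1-2\alpha\theta+\alpha^2L(\ww_0)c^2\big)\|\bgg_t\|^2$ over $\alpha$ and invokes nonnegativity.

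The gap is in your last step. You claim that the necessary condition $\theta\leq c\sqrt{L(\ww_0)}$ yields $\tilde{\tau}_2\leq 1$, which you (correctly) note is needed to turn $\alpha_t\geq\min\{1,\tilde{\tau}_2\}$ into $\alpha_t\geq\tilde{\tau}_2$. That implication is false. From $\theta\leq c\sqrt{L(\ww_0)}$ one only gets
\begin{equation*}
\tilde{\tau}_2=\frac{2(1-\rho)\theta}{L(\ww_0)c^2}\leq\frac{2(1-\rho)}{c\sqrt{L(\ww_0)}},
\end{equation*}
which exceeds $1$ whenever $c\sqrt{L(\ww_0)}<2(1-\rho)$. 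This regime is not vacuous: with exact local solves ($\varepsilon_i^{(2)}=0$, hence $c=1/\phi$) and $\phi>\sqrt{L(\ww_0)}/\big(2(1-\rho)\big)$, any admissible $\theta$ near $c\sqrt{L(\ww_0)}$ gives $\tilde{\tau}_2>1$. (What the necessary condition does give is $2\tilde{\tau}_2\rho\theta=4\rho(1-\rho)\theta^2/(L(\ww_0)c^2)\leq 4\rho(1-\rho)\leq1$, i.e.\ a bound on the product $\tilde{\tau}_2\cdot 2\rho\theta$, not on $\tilde{\tau}_2$ itself.) Consequently, when $\tilde{\tau}_2>1$ your argument only produces $\alpha_t=1$ and the weaker contraction $(1-2\rho\theta)\|\bgg_t\|^2$, not the claimed $(1-2\tilde{\tau}_2\rho\theta)\|\bgg_t\|^2$. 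In fairness, you have put your finger on a real subtlety that the paper itself glosses over --- its proof asserts that line-search will pass for some $\alpha_t\geq\tilde{\tau}_2$ without reconciling this with the constraint $\alpha_t\in(0,1]$ --- so your write-up is no less rigorous than the paper's; but your proposed repair does not close the hole, and an honest statement of what your (and the paper's) argument establishes would replace $\tilde{\tau}_2$ by $\min\{1,\tilde{\tau}_2\}$ in the rate.
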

        
        Theorem~\ref{theorem: Case 2} gives a faster convergence rate than Theorem~\ref{theorem: Case 2 Inexact} as $\tau_2\geq\tilde{\tau}_2$, where $\tau_2$ and $\tilde{\tau}_2$ are as in Theorem~\ref{theorem: Case 2} and Theorem~\ref{theorem: Case 2 Inexact}, respectively.
        Reducing the inexactness error $\varepsilon_i^{(2)}$ leads to faster convergence in Theorem~\ref{theorem: Case 2 Inexact}.
        Moreover, if $\tilde{\HH}_{t,i}^\dagger\tilde{\bgg}_t$ is computed exactly, that is $\varepsilon_i^{(2)}=0$ for all $i=1,\ldots,m$, then Theorem~\ref{theorem: Case 2 Inexact} gives the same convergence as Theorem~\ref{theorem: Case 2}.

    \subsection{Analysis of Case 3 and Case 3*}\label{Section: Case 3}
        Now we turn to the final case, and analyze the convergence of iterations of DINGO that fall under \labelcref{Case 3} and \hbox{\labelcref{Case 3*}} under the condition of exact and inexact update, respectively.
        For such iterations, we make the following assumption about pseudo-inverse regularity of the full Hessian, rather than local pseudo-inverse regularity as in Assumption~\ref{assumption: Case 1 Gammas}.
        
        \begin{assumption}[Pseudo-Inverse Regularity \cite{Newton-MR}]\label{assumption: Case 3 Gamma}
            There exists a constant $\gamma\in(0,\infty)$ such that for all $\ww\in\bbR^d$ we have $\big\|\grad^2f(\ww)\pp\big\| \geq \gamma\big\|\pp\big\|$ for all $\pp\in\mathcal{R}\big(\grad^2f(\ww)\big)$ with $f$ as in \eqref{eq: finite sum}.
        \end{assumption}
        
        Additionally, we make the following assumption about the projection of the gradient onto the range of the Hessian.
        
        \begin{assumption}[Gradient-Hessian Null-Space Property \cite{Newton-MR}]\label{assumption: GHNSP}
            There exists a constant $\nu\in(0,1]$ such that
            \begin{equation*}
                \big\| (\UU_\ww^\perp)^T\grad f(\ww) \big\|^2
                \leq
                \frac{1-\nu}{\nu} \big\| \UU_\ww^T\grad f(\ww) \big\|^2,
            \end{equation*}
            for all $\ww\in\bbR^d$, where $\UU_\ww$ and $\UU_\ww^\perp$ denote arbitrary orthonormal bases for $\mathcal{R}\big(\grad^2f(\ww)\big)$ and its orthogonal complement, respectively, with $f$ as in \eqref{eq: finite sum}.
        \end{assumption}
        
        Assumption~\ref{assumption: GHNSP} implies that, as the iterations progress, the gradient will not become arbitrarily orthogonal to the range space of the Hessian matrix.
        As an example, take an under-determined least squares problem, where we have $f_i(\ww)=\|\AA_i\ww-\bb_i\|^2/2$ in \eqref{eq: finite sum}. 
        In this example, each $f_i$ is clearly not strongly convex; yet, Assumptions~\ref{assumption: Case 3 Gamma} and \ref{assumption: GHNSP} are satisfied with $\gamma = \sigma_\text{min}^2(\AA)/m$ and $\nu=1$, respectively, where $\AA=[\AA_1^T\cdots\AA_m^T]^T$ and $\sigma_\text{min}(\AA)$ is the smallest non-zero singular value of $\AA$.
        See \cite{Newton-MR} for a detailed discussion and many examples of Assumption~\ref{assumption: GHNSP}, in the case of $m=1$.
        
        Under Assumption~\ref{assumption: GHNSP}, we have both
        \begin{equation}\label{equation: Fred 2}
            \big\| \grad f(\ww) \big\|^2 \leq \frac{1}{\nu} \big\| \UU_\ww^T\grad f(\ww) \big\|^2
            \quad\text{and}\quad
            \big\| \grad f(\ww) \big\|^2 \geq \frac{1}{1-\nu} \big\| (\UU_\ww^\perp)^T\grad f(\ww) \big\|^2,
        \end{equation}
        for all $\ww\in\bbR^d$; see \cite{Newton-MR} for a proof. 
        Under Assumptions~\ref{assumption: Lipschitz Continuity}, \ref{assumption: Case 3 Gamma} and \ref{assumption: GHNSP}, we have
        \begin{equation}\label{eq: case 3 lower bound}
            \big\langle (\tilde{\HH}_{t,i}^T\tilde{\HH}_{t,i})^{-1}\HH_t\bgg_t,\HH_t\bgg_t \big\rangle
            \geq \frac{1}{K_i^2+\phi^2} \|\HH_t\bgg_t\|^2
            = \frac{1}{K_i^2+\phi^2} \|\HH_t\UU_{\ww_t}\UU_{\ww_t}^T\bgg_t\|^2 
            \geq \frac{\gamma^2\nu}{K_i^2+\phi^2} \|\bgg_t\|^2,
        \end{equation}
        for all iterations $t$ and all $i=1,\ldots,m$.
        We use this result in our convergence analysis of \labelcref{Case 3} and \labelcref{Case 3*}.

        \subsubsection*{Exact Update}
        Suppose we are able to run DINGO with exact update. 
        For iterations in \labelcref{Case 3}, we obtain the following convergence result.
        
        \begin{theorem}[Convergence Under Case 3]\label{theorem: Case 3}
        	Suppose Assumptions~\ref{assumption: Lipschitz Continuity}, \ref{assumption: Case 3 Gamma} and \ref{assumption: GHNSP} hold and that we run Algorithm~\ref{alg: Our Method}.
            Then for all iterations $t$ in \labelcref{Case 3} we have $\|\bgg_{t+1}\|^2 \leq (1-2\tau_3\rho\theta) \|\bgg_t\|^2$ with constants
            \begin{equation}\label{eq: Case 3 Convergence}
                \tau_3 = \frac{2(1-\rho)\theta}{L(\ww_0)c^2},
            	\quad
                c = \frac{1}{\phi} + \frac{1}{m} \bigg( \frac{1}{\phi} + \frac{\theta}{\gamma\sqrt{\nu}} \bigg) \sum_{i=1}^{m} \sqrt{\frac{K_i^2+\phi^2}{\phi^2}},
            \end{equation}
            where $L(\ww_0)$ is as in \eqref{eq: Lipschitz to Moral}, $\rho$, $\theta$ and $\phi$ are as in Algorithm~\ref{alg: Our Method}, $K_i$ are as in Assumption~\ref{assumption: Lipschitz Continuity}, $\gamma$ is as in Assumption~\ref{assumption: Case 3 Gamma}, and $\nu$ is as in Assumption~\ref{assumption: GHNSP}. 
            Moreover, for any choice of $\theta>0$ we have $0\leq 1-2\tau_3\rho\theta <1$.
        \end{theorem}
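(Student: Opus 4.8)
The plan is to follow the same three-step template that underlies Theorems~\ref{theorem: Case 1} and \ref{theorem: Case 2}: first establish the descent bound $\dotprod{\pp_t,\HH_t\bgg_t}\le-\theta\norm{\bgg_t}^2$, then prove a norm bound $\norm{\pp_t}\le c\norm{\bgg_t}$ with $c$ as in \eqref{eq: Case 3 Convergence}, and finally combine both with the smoothness estimate \eqref{eq: Moral-Smoothness Corollary} to extract a guaranteed step-size. The descent bound is immediate from the construction of \labelcref{Case 3}: for each $i\in\mathcal{I}_t$ the constraint in \eqref{eq: Lagrange Sub Problem 1} gives $\dotprod{\pp_{t,i},\HH_t\bgg_t}\le-\theta\norm{\bgg_t}^2$, while for $i\notin\mathcal{I}_t$ we have $\pp_{t,i}=-\tilde{\HH}_{t,i}^\dagger\tilde{\bgg}_t$ with $\dotprod{\tilde{\HH}_{t,i}^\dagger\tilde{\bgg}_t,\HH_t\bgg_t}\ge\theta\norm{\bgg_t}^2$; averaging over $i$ yields the claim.

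The heart of the proof, and the step I expect to be most delicate, is the bound $\norm{\pp_t}\le c\norm{\bgg_t}$. By the triangle inequality it suffices to bound $\norm{\pp_{t,i}}$ for each worker. Every $\pp_{t,i}$ contains the term $-\tilde{\HH}_{t,i}^\dagger\tilde{\bgg}_t$, which contributes at most $\norm{\tilde{\HH}_{t,i}^\dagger}\norm{\tilde{\bgg}_t}\le\norm{\bgg_t}/\phi$ by \eqref{eq: H tilde dagger norm bound} and $\norm{\tilde{\bgg}_t}=\norm{\bgg_t}$; these account for the leading $1/\phi$ of $c$. It then remains to control, for $i\in\mathcal{I}_t$, the regularization term $\lambda_{t,i}\norm{\bMM_i^{-1}\HH_t\bgg_t}$, where $\bMM_i\defeq\tilde{\HH}_{t,i}^T\tilde{\HH}_{t,i}=\HH_{t,i}^2+\phi^2\eye$ has spectrum in $[\phi^2,K_i^2+\phi^2]$. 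The key algebraic device is the inequality $\norm{\bMM_i^{-1}\HH_t\bgg_t}\le\phi^{-1}\dotprod{\bMM_i^{-1}\HH_t\bgg_t,\HH_t\bgg_t}^{1/2}$, which follows from $\bMM_i^{-1}\preceq\phi^{-2}\eye$ and cancels the denominator of $\lambda_{t,i}$. Splitting the numerator of $\lambda_{t,i}$ into the part $\theta\norm{\bgg_t}^2$ and the part $-\dotprod{\tilde{\HH}_{t,i}^\dagger\tilde{\bgg}_t,\HH_t\bgg_t}$ (itself bounded by $\phi^{-1}\norm{\bgg_t}\norm{\HH_t\bgg_t}$), and applying the two lower bounds in \eqref{eq: case 3 lower bound}, namely $\dotprod{\bMM_i^{-1}\HH_t\bgg_t,\HH_t\bgg_t}\ge\gamma^2\nu\norm{\bgg_t}^2/(K_i^2+\phi^2)$ for the first part and $\dotprod{\bMM_i^{-1}\HH_t\bgg_t,\HH_t\bgg_t}\ge\norm{\HH_t\bgg_t}^2/(K_i^2+\phi^2)$ for the second, produces precisely the per-worker factor $\bigl(\phi^{-1}+\theta/(\gamma\sqrt{\nu})\bigr)\sqrt{(K_i^2+\phi^2)/\phi^2}$. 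Averaging over $i$ and extending the sum to all indices (the omitted terms being nonnegative) assembles the stated $c$.

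With both bounds in hand, substituting $\dotprod{\pp_t,\HH_t\bgg_t}\le-\theta\norm{\bgg_t}^2$ and $\norm{\pp_t}\le c\norm{\bgg_t}$ into \eqref{eq: Moral-Smoothness Corollary} shows that the Armijo condition \eqref{eq: Armijo-type line seach} holds for every $\alpha\le 2(1-\rho)(-\dotprod{\pp_t,\HH_t\bgg_t})/(L(\ww_0)\norm{\pp_t}^2)$, and this threshold is at least $\tau_3$. Since the line-search takes the largest admissible $\alpha_t\in(0,1]$, we get $\alpha_t\ge\tau_3$, whence $\norm{\bgg_{t+1}}^2\le(1-2\alpha_t\rho\theta)\norm{\bgg_t}^2\le(1-2\tau_3\rho\theta)\norm{\bgg_t}^2$. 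Finally, because the quadratic on the right of \eqref{eq: Moral-Smoothness Corollary} dominates a squared norm, its minimum over $\alpha$ is nonnegative; this forces $(-\dotprod{\pp_t,\HH_t\bgg_t})^2\le L(\ww_0)\norm{\pp_t}^2\norm{\bgg_t}^2$, which together with the two bounds gives $\theta^2\le L(\ww_0)c^2$. Combined with the elementary $4\rho(1-\rho)\le1$, this yields $2\tau_3\rho\theta=4\rho(1-\rho)\theta^2/(L(\ww_0)c^2)\le1$, so $0\le1-2\tau_3\rho\theta$, while $1-2\tau_3\rho\theta<1$ is clear. Notably, unlike in Theorems~\ref{theorem: Case 1} and \ref{theorem: Case 2}, no upper restriction on $\theta$ is needed here, since descent in \labelcref{Case 3} is enforced by construction for every $\theta>0$.
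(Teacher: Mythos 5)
Your proof is correct and follows essentially the same route as the paper's: the identical descent bound, the identical estimate $\|\pp_t\|\le c\|\bgg_t\|$ obtained by controlling $\lambda_{t,i}\|(\tilde{\HH}_{t,i}^T\tilde{\HH}_{t,i})^{-1}\HH_t\bgg_t\|$ (your spectral inequality $\|\bMM_i^{-1}\HH_t\bgg_t\|\le\phi^{-1}\langle\bMM_i^{-1}\HH_t\bgg_t,\HH_t\bgg_t\rangle^{1/2}$ is precisely the paper's factorization $(\tilde{\HH}_{t,i}^T\tilde{\HH}_{t,i})^{-1}=\tilde{\HH}_{t,i}^\dagger(\tilde{\HH}_{t,i}^T)^\dagger$ combined with $\|\tilde{\HH}_{t,i}^\dagger\|\le1/\phi$, written spectrally), and the identical step-size extraction against \eqref{eq: Moral-Smoothness Corollary}. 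The only fine point is the last clause: the paper additionally observes that $\theta\le c\sqrt{L(\ww_0)}$ holds \emph{unconditionally}, because $c\ge\theta/(\gamma\sqrt{\nu})$ while $\gamma\sqrt{\nu}\le\sqrt{L(\ww_0)}$, whereas your quadratic-nonnegativity argument establishes it only when a Case~3 iteration actually occurs --- which is all the convergence claim itself requires.
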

        
        Comparing Theorems~\ref{theorem: Case 2} and \ref{theorem: Case 3}, iterations of DINGO should have slower convergence if they are in \labelcref{Case 3} rather than \labelcref{Case 2}. 
        This is also observed empirically in Section~\ref{Section: Experiments}.
        Furthermore, an iteration in \labelcref{Case 3} would require two more communication rounds than if it were to stop in \labelcref{Case 1} or \labelcref{Case 2}.
        Therefore, one may wish to choose $\theta$ and $\phi$ appropriately to reduce the chances that iteration $t$ falls in \labelcref{Case 3} or that $|\mathcal{I}_t|$ is large.
        Under this consideration, Lemma~\ref{lem: necessary conditions} presents a necessary condition on a relationship between $\theta$ and $\phi$. 
        
        \begin{lemma}\label{lem: necessary conditions}
            Suppose $f_i$ in \eqref{eq: finite sum} is twice differentiable, for all $i=1,\ldots,m$, and that we run Algorithm~\ref{alg: Our Method}. 
            If $|\mathcal{I}_t|<m$ for some iteration $t$, as in \eqref{eq: set of Case 3 iteration indices}, then $\theta\phi \leq \|\HH_t\bgg_t\|/\|\bgg_t\|$. 
        \end{lemma}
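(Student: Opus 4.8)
The plan is to exploit the fact that $|\mathcal{I}_t|<m$ forces at least one worker index to \emph{violate} the defining inequality of $\mathcal{I}_t$, and then to control the resulting inner product with Cauchy--Schwarz together with the operator-norm bound on the regularized pseudo-inverse. First I would observe that $|\mathcal{I}_t|<m$ means the complement of $\mathcal{I}_t$ in $\{1,\ldots,m\}$ is non-empty, so there exists an index $j\notin\mathcal{I}_t$. By the definition of $\mathcal{I}_t$ in \eqref{eq: set of Case 3 iteration indices}, this index satisfies the reverse inequality
\begin{equation*}
    \langle \tilde{\HH}_{t,j}^\dagger\tilde{\bgg}_t, \HH_t\bgg_t \rangle \geq \theta\|\bgg_t\|^2.
\end{equation*}

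Next I would apply Cauchy--Schwarz to the left-hand side and bound each factor. For the first factor, I would use $\|\tilde{\bgg}_t\|=\|\bgg_t\|$, which is immediate from the zero-padding in \eqref{eq: H tilde and g tilde}, together with the operator-norm bound \eqref{eq: H tilde dagger norm bound}, namely $\|\tilde{\HH}_{t,j}^\dagger\|\leq 1/\phi$, to obtain $\|\tilde{\HH}_{t,j}^\dagger\tilde{\bgg}_t\|\leq\|\bgg_t\|/\phi$. Chaining these estimates gives
\begin{equation*}
    \theta\|\bgg_t\|^2 \leq \|\tilde{\HH}_{t,j}^\dagger\tilde{\bgg}_t\|\,\|\HH_t\bgg_t\| \leq \frac{1}{\phi}\|\bgg_t\|\,\|\HH_t\bgg_t\|.
\end{equation*}
Since the lemma concerns an iteration at which $\mathcal{I}_t$ (and hence $\pp_t$) is computed, we have $\|\bgg_t\|\neq 0$, as the algorithm would otherwise have terminated at the gradient test; dividing through by $\|\bgg_t\|^2$ and multiplying by $\phi$ then yields $\theta\phi\leq\|\HH_t\bgg_t\|/\|\bgg_t\|$, as claimed.

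The argument is essentially a one-line Cauchy--Schwarz estimate, so there is no serious obstacle; the only ingredient requiring care is the bound $\|\tilde{\HH}_{t,j}^\dagger\|\leq 1/\phi$. Because the present lemma assumes only twice differentiability and not Assumption~\ref{assumption: Lipschitz Continuity}, I would, to keep the argument self-contained, rederive this bound directly from the block structure rather than invoking the Case~2 machinery: since $\tilde{\HH}_{t,j}^T\tilde{\HH}_{t,j}=\HH_{t,j}^2+\phi^2\eye\succeq\phi^2\eye$, the smallest singular value of the full-column-rank matrix $\tilde{\HH}_{t,j}$ is at least $\phi$, whence $\|\tilde{\HH}_{t,j}^\dagger\|=1/\sigma_{\min}(\tilde{\HH}_{t,j})\leq 1/\phi$. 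This makes the estimate valid under the weaker hypotheses of the lemma.
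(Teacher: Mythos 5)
Your proof is correct and follows essentially the same route as the paper's: pick an index $i\notin\mathcal{I}_t$ (non-empty since $|\mathcal{I}_t|<m$), use the reverse inequality from the definition of $\mathcal{I}_t$, apply Cauchy--Schwarz, and invoke the bound $\|\tilde{\HH}_{t,i}^\dagger\|\leq 1/\phi$ to conclude $\theta\|\bgg_t\|^2\leq\frac{1}{\phi}\|\bgg_t\|\,\|\HH_t\bgg_t\|$. Your self-contained derivation of $\|\tilde{\HH}_{t,i}^\dagger\|\leq 1/\phi$ from $\tilde{\HH}_{t,i}^T\tilde{\HH}_{t,i}\succeq\phi^2\eye$ is a fine (and equally assumption-free) substitute for the paper's Lemma~\ref{lem: pseudoinverse norm bound}, which is itself purely linear-algebraic, so the weaker hypotheses of the lemma pose no issue either way.
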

        
        \cref{lem: necessary conditions} suggests that we should pick $\theta$ and $\phi$ so that their product, $\theta \phi$, is small. 
        Clearly, choosing smaller $\theta$ will increase the chance of an iteration of DINGO being in \labelcref{Case 1} or \labelcref{Case 2}. 
        Choosing smaller $\phi$ will preserve more curvature information of the Hessian $\HH_{t,i}$ in $\tilde{\HH}_{t,i}^\dagger$.
        On the other hand, choosing larger $\phi$ decreases the upper bound on the condition number of $\tilde{\HH}_{t,i}^T\tilde{\HH}_{t,i}$, namely $(K_i^2+\phi^2)/\phi^2$.
        This in turn increases the lower bound on the step-size in Theorems~\ref{theorem: Case 2 Inexact}, \ref{theorem: Case 3} and \ref{theorem: Case 3 Inexact}, and allows for a higher degree of inexactness $\varepsilon_i^{(3)}$ in Theorem~\ref{theorem: Case 3 Inexact}.
        
        We can finally present a unifying result on the overall worst-case \emph{linear convergence rate} of DINGO under exact update.
        
        \begin{corollary}[Overall Linear Convergence of DINGO Under Exact Update]\label{corollary: unify}
        	Suppose Assumptions~\ref{assumption: Lipschitz Continuity}, \ref{assumption: Case 1 Gammas}, \ref{assumption: Case 3 Gamma} and \ref{assumption: GHNSP} hold and that we run Algorithm~\ref{alg: Our Method}.
        	Then for all iterations $t$ we have $\|\bgg_{t+1}\|^2 \leq (1-2\tau\rho\theta) \|\bgg_t\|^2$ with $\tau=\min\{\tau_1,\tau_2,\tau_3\}$, where $\tau_1$, $\tau_2$ and $\tau_3$ are as in Theorems~\ref{theorem: Case 1}, \ref{theorem: Case 2} and \ref{theorem: Case 3}, respectively, and $\rho$ and $\theta$ are as in Algorithm~\ref{alg: Our Method}.
        \end{corollary}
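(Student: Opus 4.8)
The plan is to exploit the fact that the three preceding theorems---Theorems~\ref{theorem: Case 1}, \ref{theorem: Case 2} and \ref{theorem: Case 3}---already cover every possible iteration, since by construction of Algorithm~\ref{alg: Our Method} each iteration $t$ falls into exactly one of \labelcref{Case 1}, \labelcref{Case 2} or \labelcref{Case 3}. First I would observe that the hypotheses of the corollary, namely Assumptions~\ref{assumption: Lipschitz Continuity}, \ref{assumption: Case 1 Gammas}, \ref{assumption: Case 3 Gamma} and \ref{assumption: GHNSP}, subsume the hypotheses of each individual theorem: Theorem~\ref{theorem: Case 1} requires only Assumptions~\ref{assumption: Lipschitz Continuity} and \ref{assumption: Case 1 Gammas}, Theorem~\ref{theorem: Case 2} requires only Assumption~\ref{assumption: Lipschitz Continuity}, and Theorem~\ref{theorem: Case 3} requires Assumptions~\ref{assumption: Lipschitz Continuity}, \ref{assumption: Case 3 Gamma} and \ref{assumption: GHNSP}. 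Hence all three theorems are simultaneously applicable under the stated assumptions.

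Next I would fix an arbitrary iteration $t$ and split into the three cases. If $t$ falls into the $j$-th case for some $j\in\{1,2,3\}$, then the corresponding one of Theorems~\ref{theorem: Case 1}, \ref{theorem: Case 2} and \ref{theorem: Case 3} yields $\|\bgg_{t+1}\|^2 \leq (1-2\tau_j\rho\theta)\|\bgg_t\|^2$. Since $\tau = \min\{\tau_1,\tau_2,\tau_3\} \leq \tau_j$ and $\rho,\theta>0$, we have $2\tau\rho\theta \leq 2\tau_j\rho\theta$, so that $1-2\tau_j\rho\theta \leq 1-2\tau\rho\theta$; multiplying this inequality by the nonnegative quantity $\|\bgg_t\|^2$ gives $\|\bgg_{t+1}\|^2 \leq (1-2\tau\rho\theta)\|\bgg_t\|^2$. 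As $t$ was arbitrary, this establishes the claimed bound for every iteration.

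Finally I would confirm that the reduction factor $1-2\tau\rho\theta$ is a genuine contraction factor lying in $[0,1)$. Each $\tau_j$ is strictly positive, being a product of $2(1-\rho)>0$, a positive curvature-type constant, and $\theta>0$, divided by the positive constant $L(\ww_0)$; hence $\tau>0$ and therefore $1-2\tau\rho\theta<1$. For the lower bound, since $\tau \leq \tau_j$ we have $1-2\tau\rho\theta \geq 1-2\tau_j\rho\theta \geq 0$, where the last inequality is exactly the conclusion $0\leq 1-2\tau_j\rho\theta$ already recorded in each of the three theorems. There is essentially no obstacle in this argument---the entire substance of the result resides in the three preceding theorems, and the only care required is in tracking the direction of the inequality when passing from $\tau_j$ to the minimum $\tau$, together with confirming that the three cases are genuinely exhaustive so that no iteration escapes the bound.
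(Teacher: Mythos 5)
Your proposal is correct and matches the paper's treatment: the paper offers no separate proof of Corollary~\ref{corollary: unify}, regarding it as an immediate consequence of Theorems~\ref{theorem: Case 1}, \ref{theorem: Case 2} and \ref{theorem: Case 3}, which is exactly your argument (the three cases are exhaustive by construction of Algorithm~\ref{alg: Our Method}, the combined assumptions make all three theorems applicable, and $\tau\leq\tau_j$ preserves the bound, with nonnegativity of $1-2\tau\rho\theta$ guaranteed since $\tau\leq\tau_3$ and $0\leq 1-2\tau_3\rho\theta$ holds for any $\theta>0$). No gaps; your only added care, tracking the inequality direction when passing to the minimum, is the right point to check.
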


        \subsubsection*{Inexact Update}
        We now suppose that we run DINGO with inexact update.
        As $\tilde{\HH}_{t,i}$ has full column rank, it follows from the condition in \eqref{eq: Case 3 inexactness condition} and from \eqref{eq: condition number} that
        \begin{equation*}
            \big\| \vv_{t,i}^{(3)} - (\tilde{\HH}_{t,i}^T\tilde{\HH}_{t,i})^{-1}\HH_t\bgg_t \big\|
            \leq \varepsilon_i^{(3)} \kappa(\tilde{\HH}_{t,i}^T\tilde{\HH}_{t,i})  
            \big\| (\tilde{\HH}_{t,i}^T\tilde{\HH}_{t,i})^{-1}\HH_t\bgg_t \big\|.
        \end{equation*}
        Then under Assumption~\ref{assumption: Lipschitz Continuity} we have
        \begin{equation}\label{eq: Case 3 inexactness condition corollary}
            \big\| \vv_{t,i}^{(3)} - (\tilde{\HH}_{t,i}^T\tilde{\HH}_{t,i})^{-1}\HH_t\bgg_t \big\|
            \leq \varepsilon_i^{(3)} \bigg(\frac{K_i^2+\phi^2}{\phi^2}\bigg) 
            \big\| (\tilde{\HH}_{t,i}^T\tilde{\HH}_{t,i})^{-1}\HH_t\bgg_t \big\|.
        \end{equation}
        Using this upper bound, for iterations in \labelcref{Case 3*}, recall that this is the inexact variant of \labelcref{Case 3}, we obtain the following convergence result.
        
        \begin{theorem}[Convergence Under Case 3*]\label{theorem: Case 3 Inexact}
        	Suppose Assumptions~\ref{assumption: Lipschitz Continuity}, \ref{assumption: Case 3 Gamma} and \ref{assumption: GHNSP} hold and that we run Algorithm~\ref{alg: Our Method} with inexact update such that the inexactness conditions in \eqref{eq: Case 2 inexactness condition} and \eqref{eq: Case 3 inexactness condition} hold with $\varepsilon_i^{(3)} < \sqrt{\phi^2/(K_i^2+\phi^2)}$ for all $i=1,\ldots,m$. 
            Then for all iterations $t$ in \labelcref{Case 3*} we have $\|\bgg_{t+1}\|^2 \leq (1-2\tilde{\tau}_3\rho\theta) \|\bgg_t\|^2$ with constants
            \begin{subequations}\label{eq: Case 3 Inexact Convergence}
                \begin{align}
                    \tilde{\tau}_3 &= \frac{2(1-\rho)\theta}{L(\ww_0)c^2}, \label{eq: Case 3 Inexact Convergence tau} \\
                    c &= \frac{1}{\phi} 
                    \Bigg( 1 + \frac{1}{m}\sum_{i=1}^{m} \varepsilon_i^{(2)} \frac{K_i^2+\phi^2}{\phi^2} \Bigg) \nonumber \\
                    &\quad+ \frac{1}{m} \sum_{i=1}^{m}
                    \Bigg(
                    \frac{1+\varepsilon_i^{(3)}(K_i^2+\phi^2)/\phi^2}{1-\varepsilon_i^{(3)}\sqrt{(K_i^2+\phi^2)/\phi^2}}
                    \Bigg)
                    \Bigg(
                    \frac{1}{\phi} \bigg(1+\varepsilon_i^{(2)}\frac{K_i^2+\phi^2}{\phi^2}\bigg) + \frac{\theta}{\gamma\sqrt{\nu}}
                    \Bigg) \sqrt{\frac{K_i^2+\phi^2}{\phi^2}}, \label{eq: Case 3 Inexact Convergence c}
                \end{align}
            \end{subequations}
            where $L(\ww_0)$ is as in \eqref{eq: Lipschitz to Moral}, 
            $\rho, \theta$ and $\phi$ are as in Algorithm~\ref{alg: Our Method}, 
            $K_i$ are as in Assumption~\ref{assumption: Lipschitz Continuity}, 
            $\gamma$ is as in Assumption~\ref{assumption: Case 3 Gamma}, 
            $\nu$ is as in Assumption~\ref{assumption: GHNSP},
            $\varepsilon_i^{(2)}$ are as in \eqref{eq: Case 2 inexactness condition},
            and $\varepsilon_i^{(3)}$ are as in \eqref{eq: Case 3 inexactness condition}.
            Moreover, for any choice of $\theta>0$ we have $0\leq 1-2\tilde{\tau}_3\rho\theta <1$.
        \end{theorem}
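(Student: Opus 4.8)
The plan is to reduce the theorem to two properties of the update direction $\pp_t$ in \labelcref{Case 3*}, exactly mirroring the exact-update analysis of Theorem~\ref{theorem: Case 3}: the descent bound $\langle\pp_t,\HH_t\bgg_t\rangle\leq-\theta\|\bgg_t\|^2$ and a norm bound $\|\pp_t\|\leq c\|\bgg_t\|$ with $c$ as in \eqref{eq: Case 3 Inexact Convergence c}. The descent bound is free from the construction \eqref{eq: case 3* update direction}: the multiplier $\lambda_{t,i}$ is chosen so that $\langle\pp_{t,i},\HH_t\bgg_t\rangle=-\theta\|\bgg_t\|^2$ for every $i\in\mathcal{I}_t^*$, whereas for $i\notin\mathcal{I}_t^*$ the defining inequality of $\mathcal{I}_t^*$ gives $\langle\pp_{t,i},\HH_t\bgg_t\rangle=-\langle\vv_{t,i}^{(2)},\HH_t\bgg_t\rangle\leq-\theta\|\bgg_t\|^2$; averaging over $i$ preserves this. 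Hence the whole content of the theorem sits in the norm bound.

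To prove $\|\pp_t\|\leq c\|\bgg_t\|$ I would apply the triangle inequality to $\pp_t=-\frac1m\sum_i\vv_{t,i}^{(2)}-\frac1m\sum_{i\in\mathcal{I}_t^*}\lambda_{t,i}\vv_{t,i}^{(3)}$ and bound the two sums against the two lines of $c$. The first sum is routine: from \eqref{eq: Case 2 inexactness condition corollary} and the bound $\|\tilde{\HH}_{t,i}^\dagger\tilde{\bgg}_t\|\leq\|\bgg_t\|/\phi$ (which follows from \eqref{eq: H tilde dagger norm bound} and $\|\tilde{\bgg}_t\|=\|\bgg_t\|$) one gets $\|\vv_{t,i}^{(2)}\|\leq\frac1\phi\big(1+\varepsilon_i^{(2)}(K_i^2+\phi^2)/\phi^2\big)\|\bgg_t\|$, whose average is precisely the first line of \eqref{eq: Case 3 Inexact Convergence c}.

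The hard part, which I expect to be the main obstacle, is bounding $\lambda_{t,i}\|\vv_{t,i}^{(3)}\|$ to recover the second line of $c$; I would organise it around the three factors appearing there. Writing $\bMM\defeq\tilde{\HH}_{t,i}^T\tilde{\HH}_{t,i}$ and $\zz\defeq\HH_t\bgg_t$, the numerator of $\lambda_{t,i}$ in \eqref{eq: case 3* update direction} is controlled by Cauchy--Schwarz together with $\|\bgg_t\|\leq\|\zz\|/(\gamma\sqrt{\nu})$, read off from \eqref{eq: case 3 lower bound}: this gives $-\langle\vv_{t,i}^{(2)},\zz\rangle+\theta\|\bgg_t\|^2\leq\big(\|\vv_{t,i}^{(2)}\|+\theta\|\bgg_t\|/(\gamma\sqrt{\nu})\big)\|\zz\|$, and inserting the $\vv_{t,i}^{(2)}$ bound from the previous step produces the middle factor $\frac1\phi\big(1+\varepsilon_i^{(2)}(K_i^2+\phi^2)/\phi^2\big)+\theta/(\gamma\sqrt{\nu})$ times $\|\bgg_t\|\|\zz\|$. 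For the denominator $\langle\vv_{t,i}^{(3)},\zz\rangle$ I would write $\vv_{t,i}^{(3)}=\bMM^{-1}\zz+\bMM^{-1}\rr$ with residual $\rr\defeq\bMM\vv_{t,i}^{(3)}-\zz$ satisfying $\|\rr\|\leq\varepsilon_i^{(3)}\|\zz\|$ by \eqref{eq: Case 3 inexactness condition}, and estimate $|\langle\bMM^{-1}\rr,\zz\rangle|\leq\varepsilon_i^{(3)}\sqrt{(K_i^2+\phi^2)/\phi^2}\,\langle\bMM^{-1}\zz,\zz\rangle$ using $\|\bMM^{-1/2}\|\leq1/\phi$ and $\|\bMM^{1/2}\|\leq\sqrt{K_i^2+\phi^2}$; this yields the denominator $1-\varepsilon_i^{(3)}\sqrt{(K_i^2+\phi^2)/\phi^2}$, which is strictly positive exactly because of the hypothesis $\varepsilon_i^{(3)}<\sqrt{\phi^2/(K_i^2+\phi^2)}$. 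Finally, $\|\vv_{t,i}^{(3)}\|\leq\big(1+\varepsilon_i^{(3)}(K_i^2+\phi^2)/\phi^2\big)\|\bMM^{-1}\zz\|$ from \eqref{eq: Case 3 inexactness condition corollary} supplies the numerator of the first factor, while the elementary spectral inequality $\|\zz\|\,\|\bMM^{-1}\zz\|\leq\sqrt{\kappa(\bMM)}\,\langle\bMM^{-1}\zz,\zz\rangle\leq\sqrt{(K_i^2+\phi^2)/\phi^2}\,\langle\bMM^{-1}\zz,\zz\rangle$ (proved by inserting $\bMM^{1/2}\bMM^{-1/2}$ and $\bMM^{-1/2}\bMM^{-1/2}$ and using $\langle\bMM^{-1}\zz,\zz\rangle=\|\bMM^{-1/2}\zz\|^2$) supplies the third factor and cancels the leftover $\langle\bMM^{-1}\zz,\zz\rangle$. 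Multiplying the three factors, summing over $i$, and adding the first sum gives $\|\pp_t\|\leq c\|\bgg_t\|$.

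With both properties of $\pp_t$ established, the endgame is identical to Theorem~\ref{theorem: Case 3}: substituting $\|\pp_t\|\leq c\|\bgg_t\|$ and $\langle\pp_t,\HH_t\bgg_t\rangle\leq-\theta\|\bgg_t\|^2$ into \eqref{eq: Moral-Smoothness Corollary} shows the Armijo test \eqref{eq: Armijo-type line seach} holds for all $\alpha\leq\tilde{\tau}_3=2(1-\rho)\theta/(L(\ww_0)c^2)$, so the accepted step satisfies $\alpha_t\geq\min\{1,\tilde{\tau}_3\}$ and therefore $\|\bgg_{t+1}\|^2\leq(1-2\tilde{\tau}_3\rho\theta)\|\bgg_t\|^2$. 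The residual claim $0\leq1-2\tilde{\tau}_3\rho\theta<1$ for every $\theta>0$ is the only genuinely $\theta$-dependent bookkeeping: since $c$ depends affinely on $\theta$ through the term $\theta/(\gamma\sqrt{\nu})$, one has $c\geq\theta/(\gamma\sqrt{\nu})$, so that $2\tilde{\tau}_3\rho\theta=4(1-\rho)\rho\theta^2/(L(\ww_0)c^2)$ stays bounded (via $\rho(1-\rho)\leq\tfrac14$), exactly as in the exact case.
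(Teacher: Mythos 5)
Your proposal is correct and takes essentially the same route as the paper's own proof: the descent property $\langle\pp_t,\HH_t\bgg_t\rangle\leq-\theta\|\bgg_t\|^2$ by construction, the triangle-inequality split of $\|\pp_t\|$ into the $\vv_{t,i}^{(2)}$ part and the $\lambda_{t,i}\vv_{t,i}^{(3)}$ part bounded via \eqref{eq: Case 2 inexactness condition corollary}, \eqref{eq: Case 3 inexactness condition corollary}, \eqref{eq: case 3 lower bound} and the conditioning bound $\kappa(\tilde{\HH}_{t,i}^T\tilde{\HH}_{t,i})\leq(K_i^2+\phi^2)/\phi^2$, followed by the same endgame through \eqref{eq: Moral-Smoothness Corollary} and $c\geq\theta/(\gamma\sqrt{\nu})$. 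The only cosmetic difference is that you work directly with $\bMM=\tilde{\HH}_{t,i}^T\tilde{\HH}_{t,i}$ and its square root to prove the inequality $\|\zz\|\,\|\bMM^{-1}\zz\|\leq\sqrt{\kappa(\bMM)}\,\langle\bMM^{-1}\zz,\zz\rangle$ explicitly, whereas the paper reaches the same estimates through $(\tilde{\HH}_{t,i}^T)^\dagger$ and $\|\tilde{\HH}_{t,i}^\dagger\|\leq1/\phi$.
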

        
        Theorem~\ref{theorem: Case 3} gives a faster convergence rate than Theorem~\ref{theorem: Case 3 Inexact} as $\tau_3\geq\tilde{\tau}_3$, where $\tau_3$ and $\tilde{\tau}_3$ are as in Theorem~\ref{theorem: Case 3} and Theorem~\ref{theorem: Case 3 Inexact}, respectively.
        Reducing the inexactness error $\varepsilon_i^{(2)}$ or $\varepsilon_i^{(3)}$ leads to faster convergence in Theorem~\ref{theorem: Case 3 Inexact}.
        Moreover, if $\tilde{\HH}_{t,i}^\dagger\tilde{\bgg}_t$ and $(\tilde{\HH}_{t,i}^T\tilde{\HH}_{t,i})^{-1}\HH_t\bgg_t$ are computed exactly, that is $\varepsilon_i^{(2)}=0$ and $\varepsilon_i^{(3)}=0$ for all $i=1,\ldots,m$, then Theorem~\ref{theorem: Case 3 Inexact} gives the same convergence as Theorem~\ref{theorem: Case 3}.
        
        We can finally present a unifying result on the overall worst-case \emph{linear convergence rate} of DINGO under inexact update.
        
        \begin{corollary}[Overall Linear Convergence of DINGO Under Inexact Update]\label{corollary: unify inexact}
        	Suppose Assumptions~\ref{assumption: Lipschitz Continuity}, \ref{assumption: Case 1 Gammas}, \ref{assumption: Case 3 Gamma} and \ref{assumption: GHNSP} hold and that we run Algorithm~\ref{alg: Our Method} with inexact update such that Condition~\ref{condition: inexactness condition} holds with $\varepsilon_i^{(3)} < \sqrt{\phi^2/(K_i^2+\phi^2)}$ for all $i=1,\ldots,m$.
        	Then for all iterations $t$ we have $\|\bgg_{t+1}\|^2 \leq (1-2\tilde{\tau}\rho\theta) \|\bgg_t\|^2$ with $\tilde{\tau}=\min\{\tilde{\tau}_1,\tilde{\tau}_2,\tilde{\tau}_3\}$, where $\tilde{\tau}_1$, $\tilde{\tau}_2$ and $\tilde{\tau}_3$ are as in Theorems~\ref{theorem: Case 1 Inexact}, \ref{theorem: Case 2 Inexact} and \ref{theorem: Case 3 Inexact}, respectively, and $\rho$, $\theta$ and $\phi$ are as in Algorithm~\ref{alg: Our Method}.
        \end{corollary}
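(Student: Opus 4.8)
The plan is to reduce the claim to the three case-specific results already established for inexact update, namely Theorems~\ref{theorem: Case 1 Inexact}, \ref{theorem: Case 2 Inexact} and \ref{theorem: Case 3 Inexact}. First I would observe that the branching structure of Algorithm~\ref{alg: Our Method} under inexact update is \emph{exhaustive}: at every iteration $t$ the update direction $\pp_t$ is produced by exactly one of \labelcref{Case 1*}, \labelcref{Case 2*} or \labelcref{Case 3*}, since \labelcref{Case 2*} is entered precisely when the test of \labelcref{Case 1*} fails, and \labelcref{Case 3*} is entered precisely when both preceding tests fail. Hence it suffices to establish the asserted contraction separately in each of the three branches and then retain the worst of the three rates.

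Next, I would match the hypotheses. The stated assumptions (Assumptions~\ref{assumption: Lipschitz Continuity}, \ref{assumption: Case 1 Gammas}, \ref{assumption: Case 3 Gamma} and \ref{assumption: GHNSP}) together with Condition~\ref{condition: inexactness condition} and the bound $\varepsilon_i^{(3)} < \sqrt{\phi^2/(K_i^2+\phi^2)}$ are exactly what the three inexact theorems require: Theorem~\ref{theorem: Case 1 Inexact} uses Assumptions~\ref{assumption: Lipschitz Continuity} and \ref{assumption: Case 1 Gammas} with \eqref{eq: Case 1 inexactness condition}; Theorem~\ref{theorem: Case 2 Inexact} uses only Assumption~\ref{assumption: Lipschitz Continuity} with \eqref{eq: Case 2 inexactness condition}; and Theorem~\ref{theorem: Case 3 Inexact} uses Assumptions~\ref{assumption: Lipschitz Continuity}, \ref{assumption: Case 3 Gamma} and \ref{assumption: GHNSP} with \eqref{eq: Case 2 inexactness condition}, \eqref{eq: Case 3 inexactness condition} and the upper bound on $\varepsilon_i^{(3)}$. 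Therefore, whichever case governs iteration $t$, the corresponding theorem applies verbatim and yields $\|\bgg_{t+1}\|^2 \leq (1-2\tilde{\tau}_j\rho\theta)\|\bgg_t\|^2$ for the relevant $j\in\{1,2,3\}$.

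The final step is to unify the three rates. Since $\rho>0$ and $\theta>0$, the affine map $\tau\mapsto 1-2\tau\rho\theta$ is strictly decreasing, so $\tilde{\tau}=\min\{\tilde{\tau}_1,\tilde{\tau}_2,\tilde{\tau}_3\}\leq\tilde{\tau}_j$ gives $1-2\tilde{\tau}_j\rho\theta \leq 1-2\tilde{\tau}\rho\theta$ for each $j$; chaining this with the per-case bound produces the uniform estimate $\|\bgg_{t+1}\|^2 \leq (1-2\tilde{\tau}\rho\theta)\|\bgg_t\|^2$ at every iteration. It remains to confirm the factor is admissible: each inexact theorem guarantees $0\leq 1-2\tilde{\tau}_j\rho\theta<1$, whence $\tilde{\tau}_j>0$ and $2\tilde{\tau}_j\rho\theta\leq1$; taking the minimum preserves positivity, so $\tilde{\tau}>0$ forces $1-2\tilde{\tau}\rho\theta<1$, while $2\tilde{\tau}\rho\theta\leq 2\tilde{\tau}_j\rho\theta\leq1$ yields $1-2\tilde{\tau}\rho\theta\geq0$.

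I do not anticipate a substantive obstacle, since all the analytic difficulty is already absorbed into the three underlying theorems; the only points demanding care are verifying exhaustiveness of the inexact branching and checking the direction of the inequality when passing to the minimum rate. The one subtlety worth flagging is that the contraction constants genuinely differ across cases (and $\tilde{\tau}_3$ in particular degrades as the inexactness levels $\varepsilon_i^{(2)}$ and $\varepsilon_i^{(3)}$ grow), so the worst-case guarantee is honestly $\min\{\tilde{\tau}_1,\tilde{\tau}_2,\tilde{\tau}_3\}$ rather than any single rate.
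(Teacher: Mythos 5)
Your proposal is correct and follows essentially the same route as the paper, which states this corollary without a separate proof precisely because it is the immediate consequence you describe: the three inexact cases are exhaustive, Theorems~\ref{theorem: Case 1 Inexact}, \ref{theorem: Case 2 Inexact} and \ref{theorem: Case 3 Inexact} apply verbatim under the stated hypotheses, and taking $\tilde{\tau}=\min\{\tilde{\tau}_1,\tilde{\tau}_2,\tilde{\tau}_3\}$ weakens each per-case contraction to a uniform one. The only caveat, which does not affect the corollary's claim, is that the bounds $0\leq 1-2\tilde{\tau}_j\rho\theta<1$ in the case theorems for $j=1,2$ are guaranteed only when iterations actually occur in the corresponding case, so they should not be cited unconditionally; the chained inequality $\|\bgg_{t+1}\|^2 \leq (1-2\tilde{\tau}_j\rho\theta)\|\bgg_t\|^2 \leq (1-2\tilde{\tau}\rho\theta)\|\bgg_t\|^2$ holds regardless.
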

        
        From Corollary~\ref{corollary: unify} and Corollary~\ref{corollary: unify inexact}, DINGO can achieve $\|\bgg_t\|\leq\varepsilon$ with $\mathcal{O}(\log(\varepsilon)/(\tau\rho\theta))$ and $\mathcal{O}(\log(\varepsilon)/(\tilde{\tau}\rho\theta))$ communication rounds under exact and inexact update, respectively.
        Moreover, the terms $\tau$ and $\tilde{\tau}$ give a lower bound on the step-size under all exact and inexact cases, respectively, which can determine the maximum communication cost needed during line-search. 
        For example, knowing $\tilde{\tau}$ could determine the number of step-sizes used in backtracking line-search for DINGO in Section~\ref{Section: Experiments}.

\section{Experiments}\label{Section: Experiments}
    In this section, we examine the empirical performance of DINGO in comparison to several other distributed first-order and second-order optimization methods.
    In Section~\ref{section: Implementation Details}, we discuss the implementation details of DINGO.
    We then outline the optimization methods that we use in our evaluations in Section~\ref{sec: Optimisation Methods}.
    In Section~\ref{Section: Numerical Examples}, we consider simulations on model problems. 
    Our performance metric is communication rounds, since, as we have discussed in Section~\ref{Section: Introduction}, communications are often considered a major bottleneck.
    All methods are implemented to run on both CPU and GPU using Python and PyTorch.
    Code is available at \url{https://github.com/RixonC/DINGO}.

    \subsection{Implementation Details of DINGO}\label{section: Implementation Details}
        In this section, we discuss our implementation details for computing the update direction $\pp_t$ and step-size $\alpha_t$ of DINGO.

        \subsubsection*{Update Direction}
            Recall that the update direction $\pp_t$ involves the computation of approximations $\vv_{t,i}^{(1)}$, $\vv_{t,i}^{(2)}$ and $\vv_{t,i}^{(3)}$ of $\HH_{t,i}^\dagger\bgg_t$, $\tilde{\HH}_{t,i}^\dagger\tilde{\bgg}_t$ and $(\tilde{\HH}_{t,i}^T\tilde{\HH}_{t,i})^{-1}(\HH_t\bgg_t)$, respectively.
            Each of these present a unique type of least-squares problem.
            Many iterative least-squares solvers exist, as they are often designed to have specific properties.
            For our implementation of DINGO, we compute $\vv_{t,i}^{(1)}$, $\vv_{t,i}^{(2)}$ and $\vv_{t,i}^{(3)}$ using MINRES-QLP \cite{MINRES-QLP}, LSMR \cite{LSMR} and CG, respectively, with early termination.
            The reasoning behind these choices is presented below.
            
            \begin{itemize}
            \item \textbf{MINRES-QLP}: 
                Recall that MINRES \cite{MINRES} is a Krylov subspace method for iteratively solving $\argmin_{\xx} \|\AA\xx-\bb\|$, where $\AA$ is symmetric, and potentially indefinite and/or singular. 
                However, for inconsistent systems, it might not return the minimum-norm solution, $\xx=\AA^\dagger\bb$. 
                MINRES-QLP, however, is a variant of MINRES that not only is theoretically guaranteed to return the minimum-norm solution, but also it has been shown to perform better on ill-conditioned problems. 
                As a result, we use MINRES-QLP to compute $\HH_{t,i}^\dagger\bgg_t$.

            \item \textbf{LSMR}:
                To compute $\tilde{\HH}_{t,i}^\dagger\tilde{\bgg}_t$ we use LSMR, which is an iterative method for computing a solution to the following problems:
                \begin{equation*}
                    \argmin_{\xx} \|\AA\xx-\bb\|^2, \quad
                    \argmin_{\xx} \big(\|\AA\xx-\bb\|^2 + \lambda^2\|\xx\|^2\big), \quad
                    \argmin_{\xx} \|\xx\| \text{ subject to } \AA\xx=\bb,
                \end{equation*}
                where $\AA$ can have any rank and be either square or non-square, and $\lambda\geq 0$.
                As $\tilde{\HH}_{t,i}$ has full column rank, LSMR will iteratively compute the unique solution
                \begin{equation*}
                    \xx 
                    = (\tilde{\HH}_{t,i}^T\tilde{\HH}_{t,i})^{-1}\tilde{\HH}_{t,i}^T \tilde{\bgg}_t
                    = \tilde{\HH}_{t,i}^\dagger\tilde{\bgg}_t,
                \end{equation*}
                to the normal equation $ \tilde{\HH}_{t,i}^T\tilde{\HH}_{t,i}\xx = \tilde{\HH}_{t,i}^T \tilde{\bgg}_t$. 
                As
                $\| \tilde{\HH}_{t,i}\xx+\tilde{\bgg}_t \|^2 = \| \HH_{t,i}\xx+\bgg_t \|^2 + \phi^2\|\xx\|^2$,
                LSMR is well-suited for this problem.
                One may also consider using LSQR \cite{LSQR} to compute $\tilde{\HH}_{t,i}^\dagger\tilde{\bgg}_t$. 
                However, we choose LSMR as it is recommended for situations where the solver is terminated early as 
                it may be able to converge in significantly fewer iterations than LSQR, see \cite{LSMR}.

            \item \textbf{CG}:
                Arguably, the most well-known Krylov subspace method is CG. 
                It iteratively solves the problem $\AA\xx=\bb$ for $\xx$, where $\AA$ is both symmetric and positive definite.
                We use CG to find the approximation $\vv_{t,i}^{(3)}$ of $(\tilde{\HH}_{t,i}^T\tilde{\HH}_{t,i})^{-1}(\HH_t\bgg_t)$ as this will guarantee $\langle \vv_{t,i}^{(3)}, \HH_t\bgg_t \rangle > 0$, regardless of the number of CG iterations performed. 
                Computing $\vv_{t,i}^{(3)}$ as an approximation of $\tilde{\HH}_{t,i}^\dagger \big( (\tilde{\HH}_{t,i}^T)^\dagger \HH_t\bgg_t \big)$ using, for example, two instances of LSMR could be suited to other implementations of DINGO, while it might not satisfy $\langle \vv_{t,i}^{(3)}, \HH_t\bgg_t \rangle > 0$ for a given number of least-squares solver iterations. 
        \end{itemize}
        
        \subsubsection*{Line Search}
            For our experiments, we use backtracking line-search to choose the step-size $\alpha_t$. 
            Specifically, the driver broadcasts the update direction $\pp_t$ to all workers and then, by a reduce operation, each worker $i$ returns the vectors
            \begin{equation*}
                \grad f_i(\ww_t+\omega^0\pp_t), \grad f_i(\ww_t+\omega^1\pp_t), \ldots, \grad f_i(\ww_t+\omega^{k-1}\pp_t),
            \end{equation*}
            for some chosen $\omega\in(0,1)$ and maximum line-search iterations $k\geq1$.
            With this information, the driver chooses $\alpha_t$ to be the largest element of $\{\omega^0,\omega^1,\ldots,\omega^{k-1}\}$ such that \eqref{eq: Armijo-type line seach} holds, provided that $k$ is selected to be large enough so such $\alpha_t$ exists. 
            
            This line-search only requires two communication rounds. 
            However, increasing both $\omega$ and $k$ will improve the accuracy of $\alpha_t$, while also increasing the amount of data transmitted. 
            GIANT, in \cite{GIANT}, uses an analogous line-search technique on the objective value of $f$, which requires each worker to return $k$ scalars. 
            Whereas, for DINGO, we require each worker to return $k$ vectors in $\bbR^d$. 
            This method of line-search has an advantage for DINGO, where, in the subsequent iteration of Algorithm~\ref{alg: Our Method}, we can reduce communication rounds by circumventing the step where we distributively compute the full gradient.
            More efficient methods of line-search, for DINGO, will be explored in future work.

    \subsection{Optimization Methods}\label{sec: Optimisation Methods}
        Below we provide an outline of the compared methods and discuss their selected hyper-parameters.
        \begin{enumerate}
            \item \textbf{DINGO}. 
                In our experiments, unless otherwise stated, we set $\theta=10^{-4}$ and $\phi=10^{-6}$. 
                MINRES-QLP, LSMR and CG are limited to 50 iterations.
                For line-search, we use an Armijo line-search parameter, $\rho$ in \eqref{eq: Armijo-type line seach}, of $10^{-4}$ and we select the largest $\alpha_t\in\{1,2^{-1},2^{-2},\ldots,2^{-50}\}$ such that \eqref{eq: Armijo-type line seach} holds.
            \item \textbf{GIANT}.
                At iteration $t$, each worker $i$ approximately solves the linear system $\HH_{t,i}\xx = \bgg_{t}$ for $\xx$
                using CG. 
                We limit CG to 50 iterations. 
                We use backtracking line-search as in \cite{GIANT} to select the largest step-size in $\{1,2^{-1},2^{-2},\ldots,2^{-50}\}$ which passes and we use an Armijo line-search parameter of $10^{-4}$.
            \item \textbf{DiSCO}.
                Each iteration $t$ of DiSCO invokes a distributed \textit{preconditioned conjugate gradient} (PCG) method 
                that iteratively solves the linear system $\PP_t^{-1}\HH_t\vv_t = \PP_t^{-1}\bgg_t$ for $\vv_t$, 
                where $\PP_t$ is a preconditioning matrix.
                For consistency, we will not employ preconditioning in our examined methods and thus we set $\PP_t=\eye$ for all iterations $t$.
                We also restrict the number of distributed PCG iterations to 50.
            \item \textbf{InexactDANE}.
                At iteration $t$, each worker $i$ inexactly solves the sub-problem
                \begin{equation*}
                    \argmin_{\ww}
                    \Big[
                        f_i(\ww) - \big(\nabla f_i(\ww_{t-1}) - \eta\nabla f(\ww_{t-1})\big)^T\ww + \frac{\mu}{2}\|\ww-\ww_{t-1}\|^2
                    \Big],
                \end{equation*}
                where $\eta,\mu\geq0$ are selected hyper-parameters. 
                We use SVRG as a local solver for this sub-problem, where we limit SVRG to 50 iterations and report the best learning rate from
                $\{10^{-6},10^{-5},\ldots,10^{6}\}$.
                SVRG was used as a local solver for InexactDANE in \cite{AIDE}.
                We also set $\eta=1$ and $\mu=0$ in all experiments as we found these resulted in best performance.
                In fact, these parameters often yielded high performance in \cite{DANE} as well.
            \item \textbf{AIDE}.
                Each iteration of AIDE invokes the InexactDANE method on a modified version of the objective function \eqref{eq: finite sum}
                and involves a catalyst acceleration parameter $\tau\geq0$.
                We have each iteration of AIDE invoke one iteration of InexactDANE.
                We will use the same parameters as in the stand-alone InexactDANE method and report the best
                $\tau\in\{10^{-6},10^{-5},\ldots,10^{6}\}$.
            \item \textbf{Asynchronous Stochastic Gradient Descent} (Async-SGD, \cite{SGD}).
                In our experiments we report the best learning rate from $\{10^{-6},10^{-5},\ldots,10^{6}\}$.
                Each worker uses a mini-batch of size $n/(5m)$.
                Due to staleness in the update directions, especially when $m$ is large, we often report a smaller learning rate for Async-SGD than in Sync-SGD.
            \item \textbf{Synchronous Stochastic Gradient Descent} (Sync-SGD, \cite{SGD}).
                Like Async-SGD, we report the best learning rate from $\{10^{-6},10^{-5},\ldots,10^{6}\}$.
                Each worker uses a mini-batch of size $n/(5m)$.
        \end{enumerate}
        
        \begin{table}
    \centering
    \caption{Details of the datasets used for softmax regression problems.}
    \vspace{10pt}
    \begin{tabular}{lccccc} 
        \toprule
        & Train Size & Test Size & Features & Classes & Problem Dimension \\ 
        &            &           & ($p$)    & ($C$)   & ($d=p(C-1)$)      \\ 
        \midrule
        \textbf{CIFAR10}       & 50,000  & 10,000 & 3,072 & 10 & 27,648 \\ 
        \textbf{EMNIST Digits} & 240,000 & 40,000 & 784   & 10 & 7,056  \\
        \bottomrule
    \end{tabular}
    \label{table: softmax datasets}
\end{table}

    \subsection{Simulations on Model Problems}\label{Section: Numerical Examples}
        In this section, we compare the performance of the optimization methods on a variety of softmax cross-entropy minimization problems with regularization. 
        In all experiments we consider \eqref{eq: finite sum} with \eqref{eq: finite sum dist fi}, where the sets $S_1,\ldots,S_m$ randomly partition the index set $\{1,\ldots,n\}$, with each having equal size $s=n/m$.
        We show the performance of the optimization methods applied to the softmax regression problem on the CIFAR10 dataset in Figure~\ref{fig: softmax CIFAR10} and on the EMNIST Digits dataset in Figures~\ref{fig: softmax MNIST} and \ref{fig: softmax EMNIST time}.
        The properties of these datasets are provided in Table~\ref{table: softmax datasets}.
        CIFAR10 presents a situation where $s<d$ in all experiments.
        On the other hand, EMNIST Digits has a large number of samples $n$ and we have $s>d$ in all experiments.
        
        DiSCO has consistent performance, regardless of the number of workers, due to the distributed PCG algorithm. 
        This essentially allows DiSCO to perform Newton's method over the full dataset. 
        This is unnecessarily costly, in terms of communication rounds, when $s$ is reasonably large. 
        Thus we see it perform comparatively poorly in Plots \ref{fig: softmax CIFAR10 10}, \ref{fig: softmax CIFAR10 100} and \ref{fig: softmax CIFAR10 1000}, and Figure~\ref{fig: softmax MNIST}. 
        DiSCO outperforms GIANT and DINGO in Plot~\ref{fig: softmax CIFAR10 10000}. 
        This is likely because the local directions ($-\HH_{t,i}^{-1}\bgg_t$ and $\pp_{t,i}$ for GIANT and DINGO, respectively) give poor updates as they are calculated using very small subsets of data. 
        As an example, in Plot~\ref{fig: softmax CIFAR10 10000} each worker has access to only five data points, while $d=27648$.
        
        A significant advantage of DINGO to InexactDane, AIDE, Async-SGD and Sync-SGD is that it is relatively easy to tune hyper-parameters. 
        Namely, making bad choices for $\rho$, $\theta$ and $\phi$ in Algorithm~\ref{alg: Our Method} will give sub-optimal performance; however, it is still theoretically guaranteed to strictly decrease the norm of the gradient. 
        In contrast, some choices of hyper-parameters in InexactDane, AIDE, Async-SGD and Sync-SGD will cause divergence and these choices can be problem specific.
        Moreover, these methods can be very sensitive to the chosen hyper-parameters with some being very difficult to select. 
        For example, the acceleration parameter $\tau$ in AIDE was found to be difficult and time consuming to tune and the performance of AIDE was sensitive to it; notice the variation in selected $\tau$ in Figure~\ref{fig: softmax CIFAR10}.
        This difficulty was also observed in \cite{GIANT, AIDE}. 
        We found that simply choosing $\rho$, $\theta$ and $\phi$ to be small, in DINGO, gave high performance.
        
        In Plot~\ref{fig: softmax MNIST 16 DI} we demonstrate the effect of choosing unnecessarily large values of $\theta$ for DINGO.
        In this experiment, each iteration of DINGO is in \labelcref{Case 1} when $\theta=10^{-4}$, \labelcref{Case 1} and \labelcref{Case 3} occur when $\theta=1$, and each iteration is in \labelcref{Case 3} when $\theta=100$. 
        We maintain a step-size of $1$ when $\theta=1$ and $\theta=100$ and we obtain similar convergence in the objective value for all three values of $\theta$. 
        In the experiment of Plot~\ref{fig: softmax MNIST 16 DI}, we obtain the exact same convergence for all values $\theta\leq 10^{-1}$.
        
        In Figure~\ref{fig: softmax EMNIST time} we compare run-times in a distributed environment running over six Amazon Elastic Compute Cloud instances via Amazon Web Services (AWS). 
        These instances are spread globally to highlight the effects of communication latency on run-time.
        Namely, they are located in Ireland, Ohio, Oregon, Singapore, Sydney and Tokyo, with the driver node taking the Ohio instance and the five worker nodes taking the others.
        In terms of run-time, DINGO is competitive with the other second-order methods, all of which outperform SGD.
        DINGO is able to achieve this while using significantly fewer communication rounds than the other methods.
        The effect of communication latency can be particularly observed in Table~\ref{table: iterations}, where we compare the number of iterations completed in one hour when the driver and all five worker machines are running locally on one node and when they are instead distributed over AWS.
        DINGO is able to effectively utilize the increased computational resources of a distributed setting, hence it's performance increased when moving workers from a single node to the distributed environment on AWS.
        
        \begin{figure}%
    \centering
    \hspace*{-12pt}\subfigure{%
    \includegraphics[height=22pt]{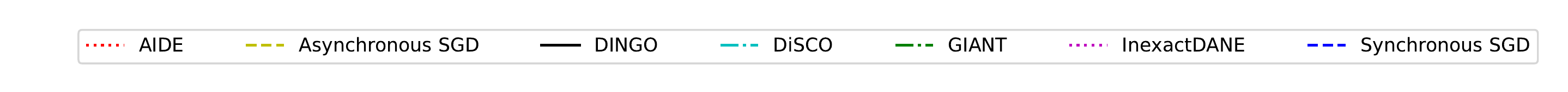}}%
    \\
    \addtocounter{subfigure}{-1}
    \subfigure[10 Workers]{%
    \label{fig: softmax CIFAR10 10}%
    \includegraphics[height=10cm]{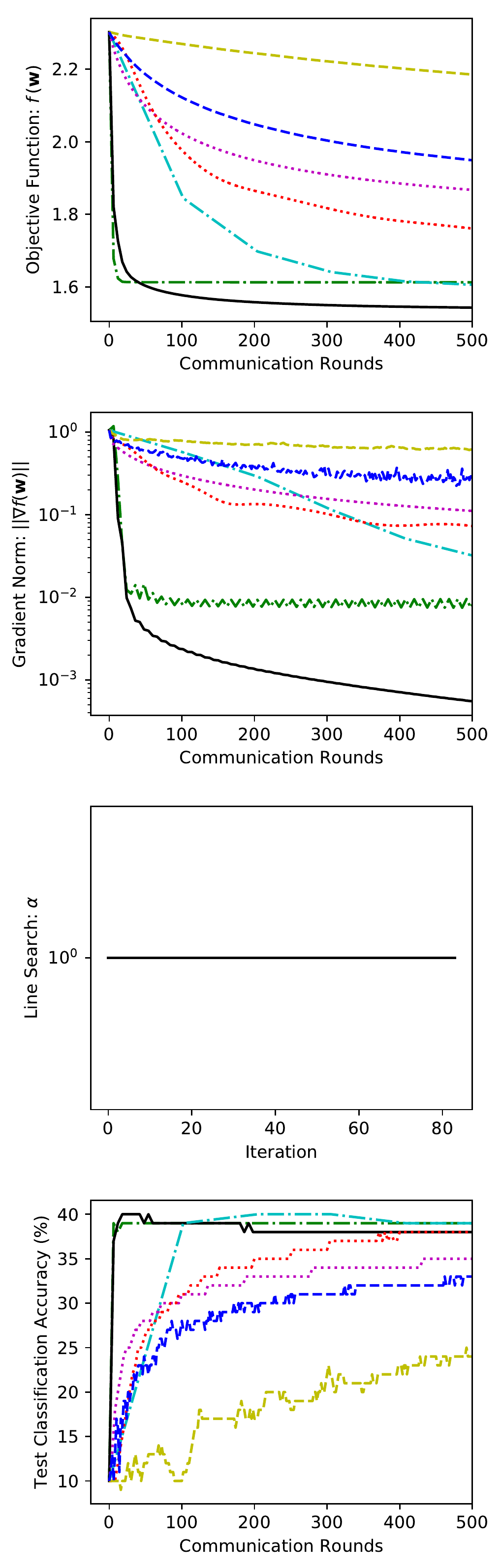}}%
    \qquad
    \subfigure[100 Workers]{%
    \label{fig: softmax CIFAR10 100}%
    \includegraphics[height=10cm]{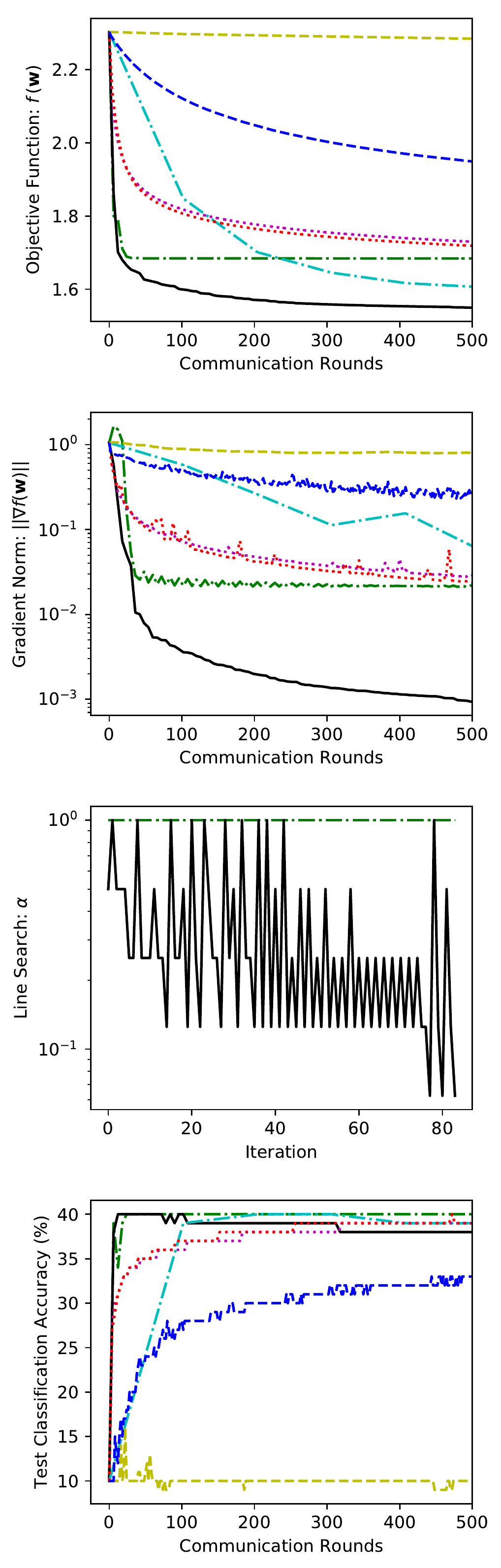}}%
    \qquad
    \subfigure[1000 Workers]{%
    \label{fig: softmax CIFAR10 1000}%
    \includegraphics[height=10cm]{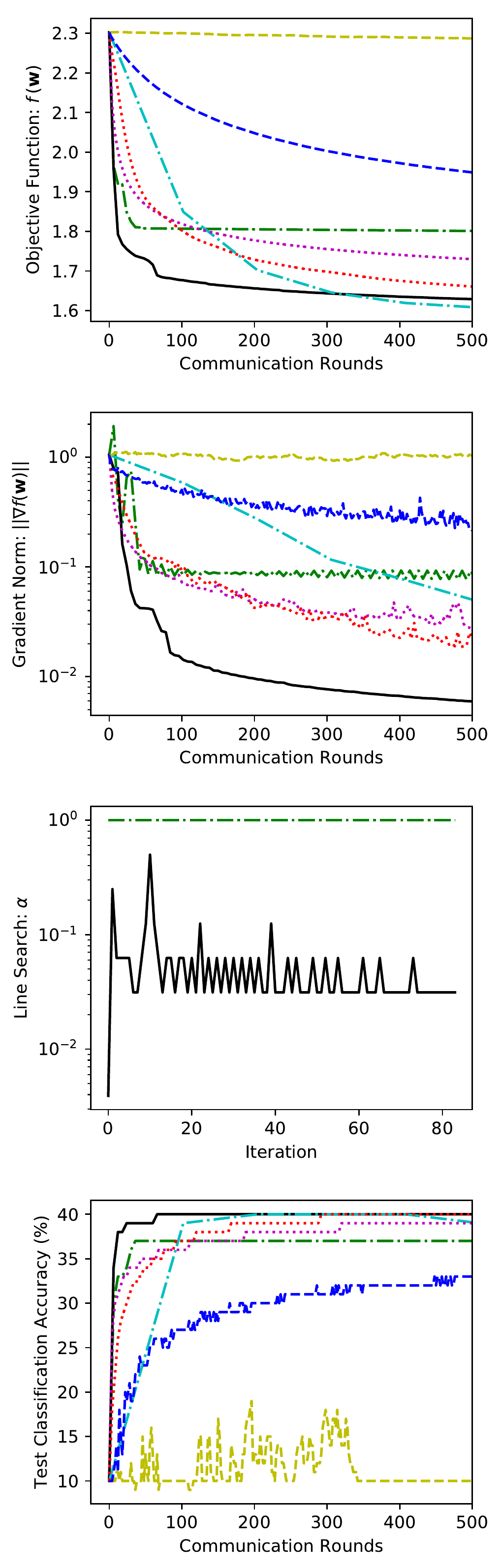}}%
    \qquad
    \subfigure[10000 Workers]{%
    \label{fig: softmax CIFAR10 10000}%
    \includegraphics[height=10cm]{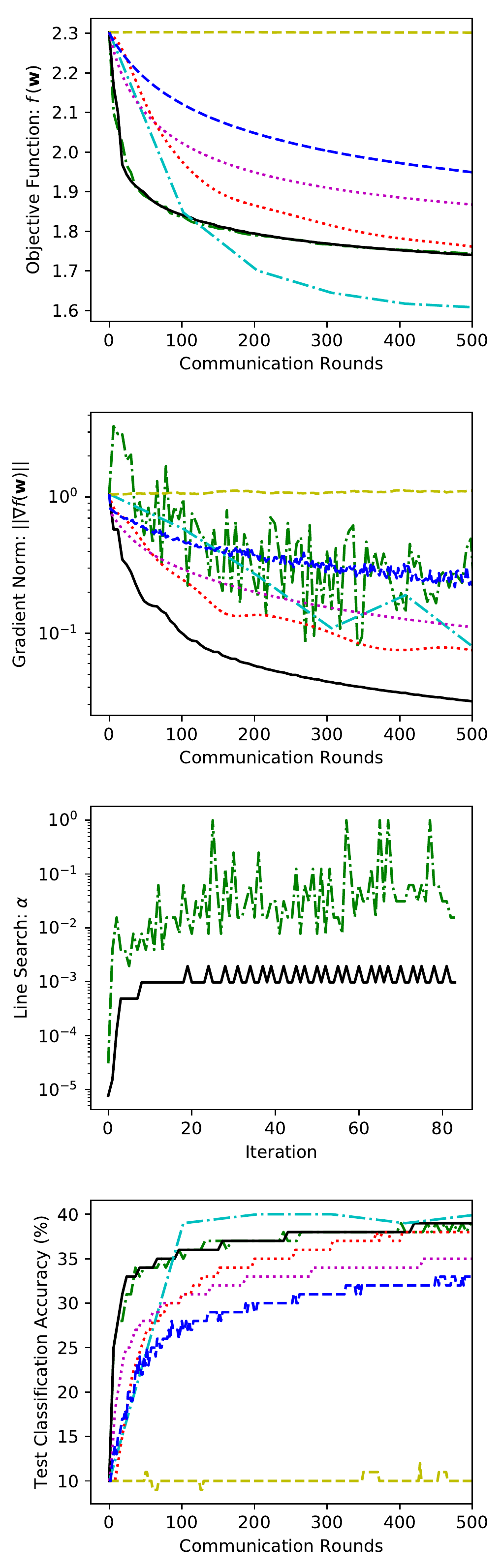}}%
    \caption{Softmax regression problem on the CIFAR10 dataset. 
    All algorithms are initialized at $\ww_0 = \zero$.
    In all plots, Sync-SGD has a learning rate of $10^{-2}$.
    Async-SGD has a learning rate of: $10^{-3}$ in \ref{fig: softmax CIFAR10 10}, 
    $10^{-4}$ in \ref{fig: softmax CIFAR10 100} and \ref{fig: softmax CIFAR10 1000}, 
    and $10^{-5}$ in \ref{fig: softmax CIFAR10 10000}.
    SVRG (for InexactDANE and AIDE) has a learning rate of: 
    $10^{-3}$ in \ref{fig: softmax CIFAR10 10} and \ref{fig: softmax CIFAR10 10000},
    and $10^{-2}$ in \ref{fig: softmax CIFAR10 100} and \ref{fig: softmax CIFAR10 1000}.
    AIDE has $\tau=100$ in \ref{fig: softmax CIFAR10 10} and \ref{fig: softmax CIFAR10 10000}, 
    $\tau=1$ in \ref{fig: softmax CIFAR10 100}, and 
    $\tau=10$ in \ref{fig: softmax CIFAR10 1000}.
    }
    \label{fig: softmax CIFAR10}
\end{figure}
        \begin{figure}%
    \centering
    \hspace*{-12pt}\subfigure{%
    \includegraphics[height=22pt]{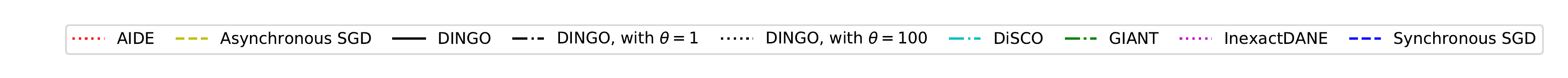}}%
    \\
    \addtocounter{subfigure}{-1}
    \subfigure[8 Workers]{%
    \label{fig: softmax MNIST 8}%
    \includegraphics[height=10cm]{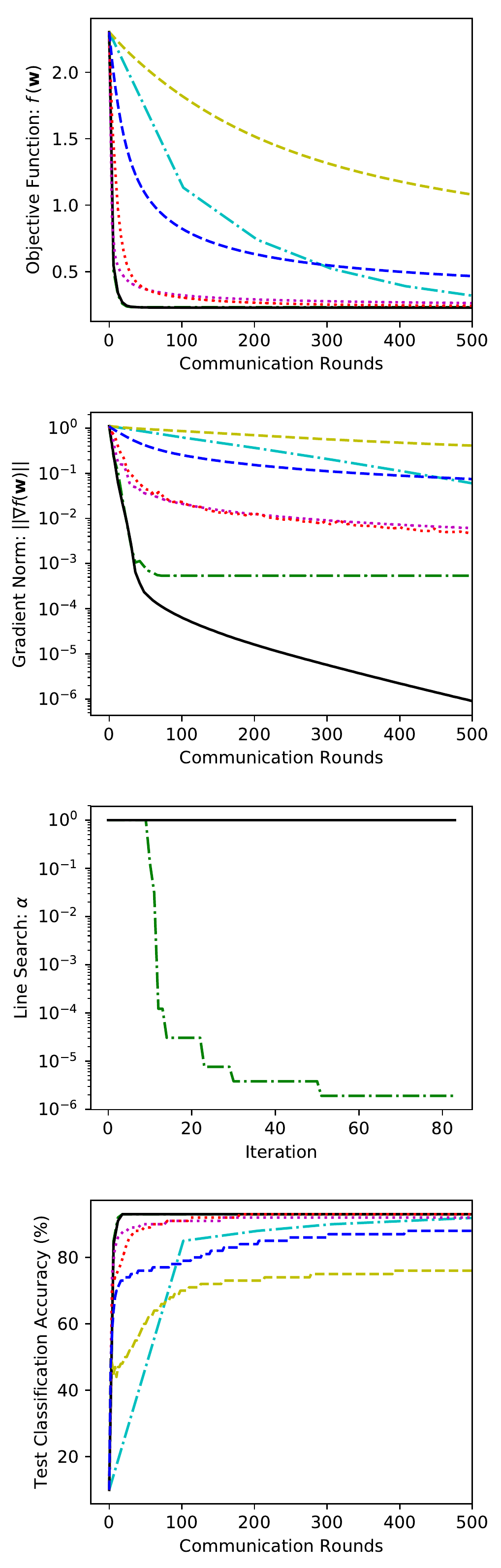}}%
    \qquad
    \subfigure[16 Workers]{%
    \label{fig: softmax MNIST 16}%
    \includegraphics[height=10cm]{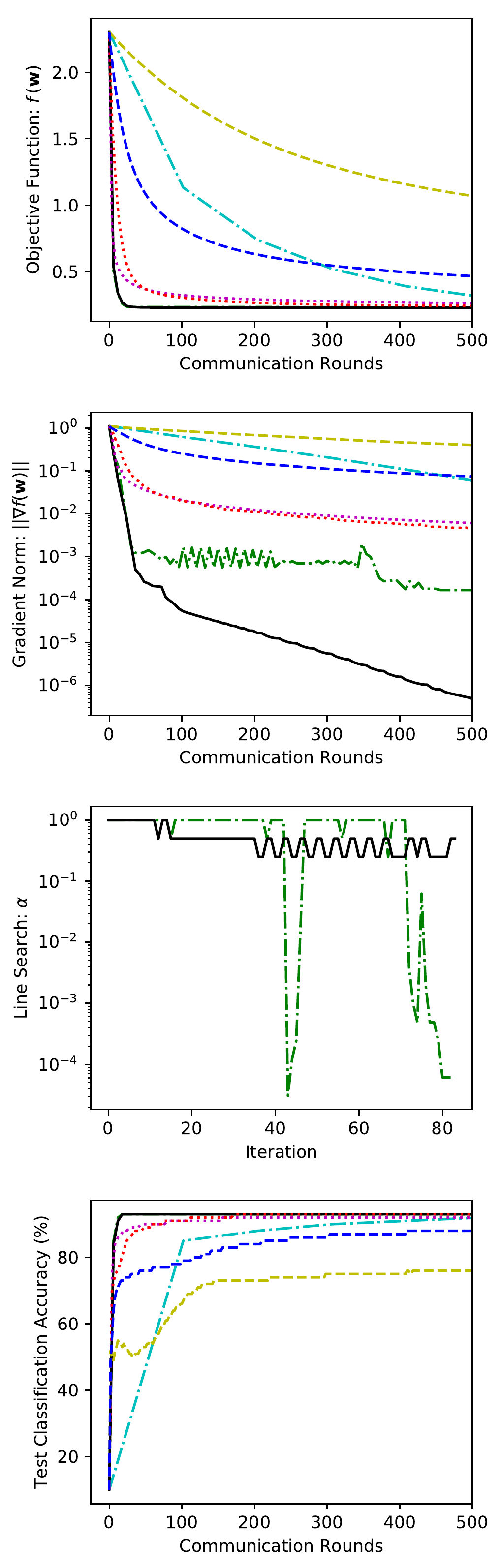}}%
    \qquad
    \subfigure[32 Workers]{%
    \label{fig: softmax MNIST 32}%
    \includegraphics[height=10cm]{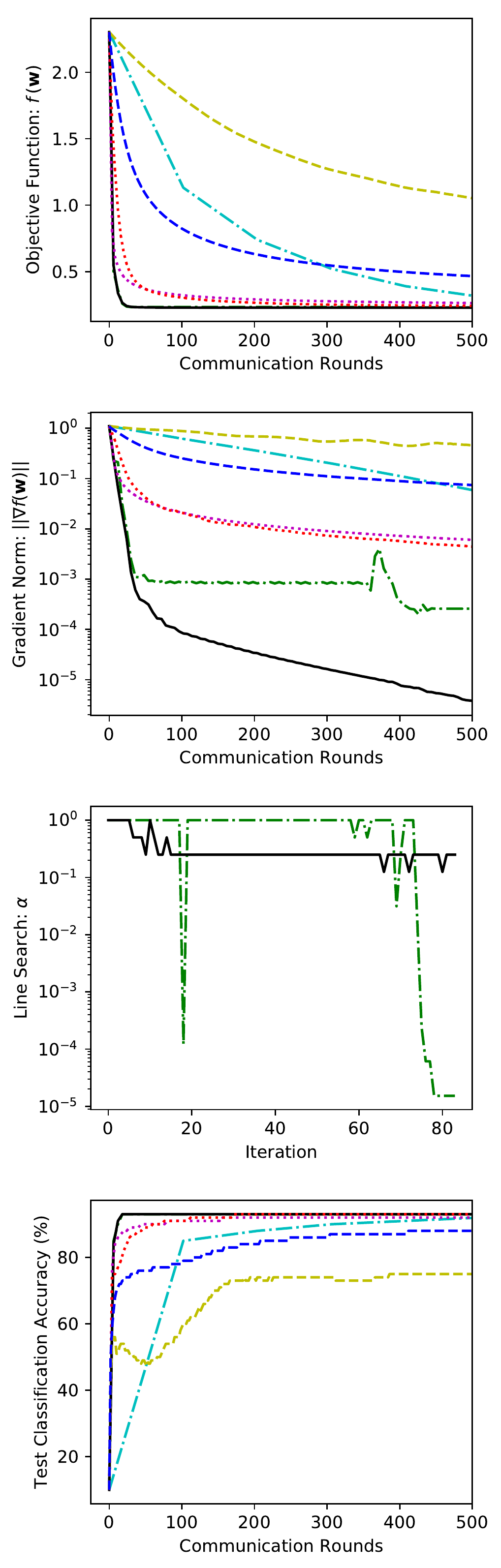}}%
    \qquad
    \subfigure[16 Workers]{%
    \label{fig: softmax MNIST 16 DI}%
    \includegraphics[height=10cm]{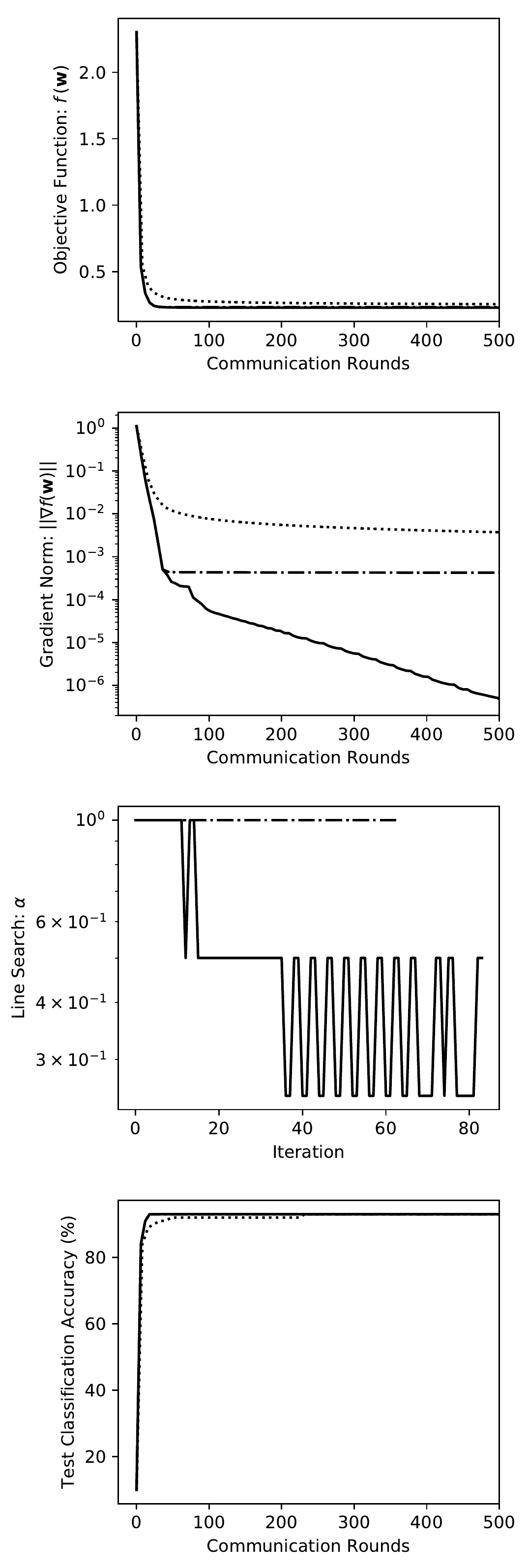}}%
    \caption{Softmax regression problem on the EMNIST Digits dataset. 
    In Plots \ref{fig: softmax MNIST 8}, \ref{fig: softmax MNIST 16} and \ref{fig: softmax MNIST 32}: 
    Async-SGD, Sync-SGD and SVRG have a learning rate of $10^{-2}$, $10^{-1}$ and $10^{-1}$, respectively, 
    and AIDE has $\tau=1$. 
    In Plot \ref{fig: softmax MNIST 16 DI} we compare the performance of DINGO for three different values of $\theta$, namely $10^{-4}$, $1$ and $100$. 
    In this plot, similar convergence in the objective value is obtained, while DINGO with $\theta=10^{-4}$ achieves a significantly faster reduction in the norm of the gradient.
    }
    \label{fig: softmax MNIST}
\end{figure}
        \begin{figure}%
    \centering
    \hspace*{-12pt}\subfigure{%
    \includegraphics[height=20pt]{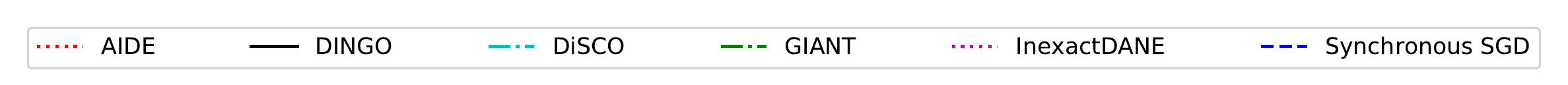}}%
    \\
    \addtocounter{subfigure}{-1}
    \subfigure{%
    \includegraphics[height=3.5cm]{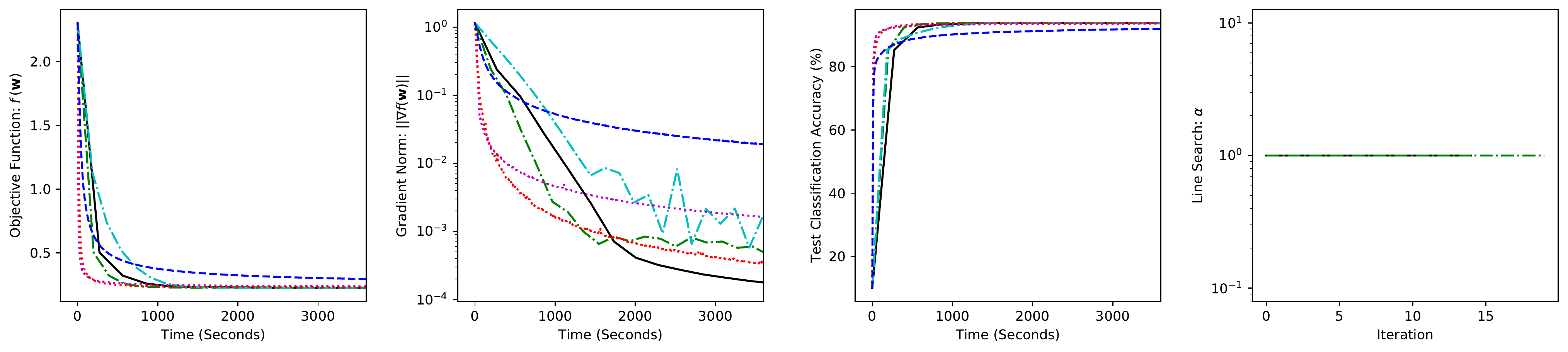}}%
    \caption{Run-time on Softmax regression problem on the EMNIST Digits dataset over AWS.
    Sync-SGD and SVRG both have a learning rate of $10^{-1}$ and AIDE has $\tau=1$.
    }
    \label{fig: softmax EMNIST time}
\end{figure}
        \begin{table}[t]
    \caption{Number of iterations completed in one hour when the driver and all five worker machines are running either on one node or on their own instances in the distributed computing environment over AWS.
    }\label{table: iterations}
    \vspace{10pt}
    \centering
    \begin{tabular}{@{} lccc @{}}
        \toprule
        & \multirow{2}{4.3cm}{\centering Number of Iterations \\ (Running on one Node)} 
        & \multirow{2}{4.2cm}{\centering Number of Iterations \\ (Running over AWS)} 
        & \multirow{2}{2.9cm}{\centering Change When \\ Going to AWS} \\ \\
        % & Number of Iterations & Number of Iterations & Change From \\
        % & When Running on 1 Node & When Running over AWS & 1 Node to AWS \\
        \midrule
        \textbf{DINGO}                   & $ 8    $ & $ 12   $ & $ +50\% $ \\
        \textbf{GIANT} \cite{GIANT}      & $ 11   $ & $ 18   $ & $ +64\% $ \\
        \textbf{DiSCO} \cite{DiSCO}      & $ 23   $ & $ 19   $ & $ -17\% $ \\
        \textbf{InexactDANE} \cite{AIDE} & $ 593  $ & $ 486  $ & $ -18\% $ \\
        \textbf{AIDE} \cite{AIDE}        & $ 601  $ & $ 486  $ & $ -19\% $ \\
        \textbf{Sync-SGD} \cite{SGD}     & $ 6003 $ & $ 1187 $ & $ -80\% $ \\
        \bottomrule
    \end{tabular}
\end{table}

\section{Conclusions and Future Work}\label{Section: conclusion}
    In this paper, we developed DINGO, which is a communication efficient Newton-type algorithm for optimization of an average of functions in a distributed computing environment.
    Similar second-order methods compromise between application range and ease of implementation. 
    DINGO is applicable to problems beyond (strongly) convex and does not impose any specific form on the underlying functions, making it suitable for many distributed optimization applications with an arbitrary distribution of data across the network.
    In addition, DINGO is simple in implementation as the underlying sub-problems are simple linear least-squares. 
    The few hyper-parameters are easy to tune and we theoretically showed a linear convergence-rate reduction in the gradient norm is guaranteed, regardless of the selected hyper-parameters. 
    Through empirical experiments, we demonstrated the effectiveness, stability and versatility of our method compared to other relevant algorithms.
    
    The following is left for future work.
    First, finding more efficient methods of line-search, for practical implementations of DINGO, than backtracking line-search. 
    As discussed previously, backtracking line-search may require the transmission of a large amount of data over the network, while still only requiring two communication rounds per iteration.
    Second, considering modifications to DINGO that prevent convergence to a local maximum/saddle point in non-invex problems.

    \subsubsection*{Acknowledgments}
    Both authors gratefully acknowledge the generous support by the Australian Research Council Centre of Excellence for Mathematical \& Statistical Frontiers (ACEMS). Fred Roosta was partially supported by the ARC DECRA Award (DE180100923). Part of this work was done while Fred Roosta was visiting the Simons Institute for the Theory of Computing. This material is based on research partially sponsored by DARPA and the Air Force Research Laboratory under agreement number FA8750-17-2-0122. The U.S. Government is authorized to reproduce and distribute reprints for Governmental purposes notwithstanding any copyright notation thereon. The views and conclusions contained herein are those of the authors and should not be interpreted as necessarily representing the official policies or endorsements, either expressed or implied, of DARPA and the Air Force Research Laboratory or the U.S. Government.

\bibliographystyle{unsrt}
\bibliography{bibliography}
    
\newpage
\appendix 
\section{Proofs of Results From Section~\ref{Section: Theoretical Analysis}}\label{appendix: proofs}
    In this section, we provide proofs of our results from Section~\ref{Section: Theoretical Analysis}.

    \subsection{Theorem \ref{theorem: Case 1}}\label{section: proof of case 1}
        \begin{proof}[Proof of Theorem~\ref{theorem: Case 1}]
            Suppose iteration $t$ is in \labelcref{Case 1}.
            For this iteration, we use the update direction $\pp_t = \sum_{i=1}^{m}\pp_{t,i}/m$, with $\pp_{t,i}=-\HH_{t,i}^{\dagger}\bgg_t$.
            Using Assumption~\ref{assumption: Case 1 Gammas} we obtain the following upper bound on $\|\pp_t\|$:
            \begin{equation*}
                \|\pp_t\|
                = \frac{1}{m} \Bigg\|\sum_{i=1}^{m}\pp_{t,i}\Bigg\|
                \leq \frac{1}{m}\sum_{i=1}^{m}\|\pp_{t,i}\|
                \leq \frac{1}{m}\sum_{i=1}^{m}\frac{1}{\gamma_i}\|\bgg_t\|
                = \frac{1}{\gamma}\|\bgg_t\|,
            \end{equation*}
            where $\gamma$ is as in \eqref{eq: Case 1 Convergence}.
            This and \eqref{eq: Moral-Smoothness Corollary} imply
            \begin{equation}\label{eq: exact update case 1 alpha inequality 1}
                \big\|\grad f(\ww_t+\alpha\pp_t)\big\|^2
                \leq \|\bgg_t\|^2 + 2\alpha\big\langle \pp_t,\HH_t\bgg_t \big\rangle + \frac{\alpha^2L(\ww_0)}{\gamma^2}\|\bgg_t\|^{2},
            \end{equation}
            for all $\alpha\geq0$.
            For all $\alpha\in(0,\tau_1]$, where $\tau_1$ is as in \eqref{eq: Case 1 Convergence}, we have
            \begin{equation*}
                \frac{\alpha^2L(\ww_0)}{\gamma^2}\|\bgg_t\|^{2} 
                \leq 2\alpha(1-\rho)\theta\|\bgg_t\|^2,
            \end{equation*}
            and as $\langle \pp_t,\HH_t\bgg_t \rangle \leq -\theta\|\bgg_t\|^2$, since this iteration $t$ is in \labelcref{Case 1}, we obtain
            \begin{equation*}
                \frac{\alpha^2L(\ww_0)}{\gamma^2}\|\bgg_t\|^{2} 
                \leq 2\alpha(\rho-1)\langle\pp_t,\HH_t\bgg_t\rangle.
            \end{equation*}
            From this and \eqref{eq: exact update case 1 alpha inequality 1} we have
            \begin{equation*}
                \big\|\grad f(\ww_t+\alpha\pp_t)\big\|^2
                \leq \|\bgg_t\|^2 + 2\alpha\langle\pp_t,\HH_t\bgg_t\rangle 
                + \frac{\alpha^2L(\ww_0)}{\gamma^2}\|\bgg_t\|^{2}
                \leq \|\bgg_t\|^2 + 2\alpha\rho\langle\pp_t,\HH_t\bgg_t\rangle,
            \end{equation*}
            for all $\alpha\in(0,\tau_1]$.
            Therefore, line-search \eqref{eq: Armijo-type line seach} will pass for some step-size $\alpha_t \geq \tau_1$ and $\| \bgg_{t+1} \|^2 \leq (1-2\tau_1\rho\theta) \|\bgg_t\|^2$.

            From $\langle \pp_t,\HH_t\bgg_t \rangle \leq -\theta\|\bgg_t\|^2$ and \eqref{eq: exact update case 1 alpha inequality 1} we have
            \begin{equation}\label{eq: exact update case 1 alpha inequality 2}
                \big\|\grad f(\ww_t+\alpha\pp_t)\big\|^2
                \leq \|\bgg_t\|^2 - 2\alpha\theta\|\bgg_t\|^2 + \frac{\alpha^2L(\ww_0)}{\gamma^2}\|\bgg_t\|^{2},
            \end{equation}
            for all $\alpha\geq0$.
            The right-hand side of \eqref{eq: exact update case 1 alpha inequality 2} is minimised when $\alpha=\theta\gamma^2/L(\ww_0)$ and has a minimum value of $\big(1-\theta^2\gamma^2/L(\ww_0)\big)\|\bgg_t\|^{2}$. 
            This implies $\theta\leq\sqrt{L(\ww_0)}/\gamma$.
            Therefore, 
            \begin{equation*}
                2\tau_1\rho\theta 
                = \frac{4\rho(1-\rho)\gamma^2\theta^2}{L(\ww_0)}
                \leq 4\rho(1-\rho)
                \leq 1,
            \end{equation*}
            which implies $0\leq 1-2\tau_1\rho\theta <1$.
        \end{proof}

    \subsection{Lemma \ref{Lemma: always case 1}}
        \begin{proof}[Proof of Lemma~\ref{Lemma: always case 1}]
            We have
            \begin{equation}\label{eq: always case 1}
                \|\bgg_t\|^2 - \bgg_t^T\HH_{t,i}^{-1}\HH_{t}\bgg_t
                = \bgg_t^T\HH_{t,i}^{-1}(\HH_{t,i}-\HH_t)\bgg_t
                \leq \varepsilon_i\|\bgg_t\|\|\HH_{t,i}^{-1}\bgg_t\|
                \leq \frac{\varepsilon_i}{\gamma_i}\|\bgg_t\|^2,
            \end{equation}
            for all iterations $t$ and all $i=1,\ldots,m$. 
            Therefore, if $\sum_{i=1}^{m}(1-\varepsilon_i/\gamma_i)/m \geq \theta$ then \eqref{eq: always case 1} implies
            \begin{equation*}
                \bigg\langle \frac{1}{m}\sum_{i=1}^{m}\HH_{t,i}^{-1}\bgg_t, \HH_t\bgg_t \bigg\rangle
                = \frac{1}{m}\sum_{i=1}^{m}\bgg_t^T\HH_{t,i}^{-1}\HH_{t}\bgg
                \geq \frac{1}{m}\sum_{i=1}^{m}\bigg(1-\frac{\varepsilon_i}{\gamma_i}\bigg)\|\bgg_t\|^2
                \geq \theta\|\bgg_t\|^2,
            \end{equation*}
            for all $t$, which implies all iterations $t$ are in \labelcref{Case 1}.
        \end{proof}

    \subsection{Theorem \ref{theorem: Case 1 Inexact}}\label{section: proof of case 1 Inexact}
        \begin{proof}[Proof of Theorem~\ref{theorem: Case 1 Inexact}]
            Suppose iteration $t$ is in \labelcref{Case 1*}.
            For this iteration, we use the update direction $\pp_t = \sum_{i=1}^{m}\pp_{t,i}$, where $\pp_{t,i}=-\vv_{t,i}^{(1)}$ and $\vv_{t,i}^{(1)}$ is an approximation of $\HH_{t,i}^\dagger\bgg_t$ satisfying the inexactness condition in \eqref{eq: Case 1 inexactness condition}.
            If we write $\bgg_t = \xx + \yy$, where $\xx\in\mathcal{R}(\HH_{t,i})=\mathcal{N}(\HH_{t,i})^\perp$ and $\yy\in\mathcal{N}(\HH_{t,i})$, then
            \begin{equation*}
                \|\HH_{t,i}^2\vv_{t,i}^{(1)} - \HH_{t,i}\bgg_t\| 
                \geq \gamma_i\|\HH_{t,i}\vv_{t,i}^{(1)} - \xx\|
                \geq \gamma_i\big(\|\HH_{t,i}\vv_{t,i}^{(1)}\| - \|\xx\|\big)
                \geq \gamma_i\big(\|\HH_{t,i}\vv_{t,i}^{(1)}\| - \|\bgg_t\|\big),
            \end{equation*}
            for all $i=1,\ldots,m$, where $\gamma_i$ is as in Assumption~\ref{assumption: Case 1 Gammas}.
            This result, \eqref{eq: Case 1 inexactness condition} and Assumptions~\ref{assumption: Lipschitz Continuity} and \ref{assumption: Case 1 Gammas} imply
            \begin{equation*}
                \gamma_i^2\|\vv_{t,i}^{(1)}\| 
                \leq \gamma_i\|\HH_{t,i}\vv_{t,i}^{(1)}\|
                \leq \gamma_i\|\bgg_t\| + \varepsilon_i^{(1)}\|\HH_{t,i}\bgg_t\|
                \leq (\gamma_i+\varepsilon_i^{(1)}K_i)\|\bgg_t\|,
            \end{equation*}
            for all $i=1,\ldots,m$.
            Therefore,
            \begin{equation*}
                \|\pp_t\| 
                \leq \frac{1}{m}\sum_{i=1}^{m} \frac{\gamma_i+\varepsilon_i^{(1)}K_i}{\gamma_i^2}\|\bgg_t\| 
                = \frac{1}{\gamma}\|\bgg_t\|,
            \end{equation*}
            where $\gamma$ is as in \eqref{eq: Case 1 Inexact Convergence}.
            From this and an analogous argument to that in the proof of Theorem~\ref{theorem: Case 1}, for all $\alpha\in(0,\tilde{\tau}_1]$, where $\tilde{\tau}_1$ is as in \eqref{eq: Case 1 Inexact Convergence}, we have
            \begin{equation*}
                \big\|\grad f(\ww_t+\alpha\pp_t)\big\|^2
                \leq \|\bgg_t\|^2 + 2\alpha\rho\langle\pp_t,\HH_t\bgg_t\rangle.
            \end{equation*}
            Therefore, line-search \eqref{eq: Armijo-type line seach} will pass for some $\alpha_t \geq \tilde{\tau}_1$ and $\| \bgg_{t+1} \|^2 \leq (1-2\tilde{\tau}_1\rho\theta) \|\bgg_t\|^2$.
            Moreover, from an analogous argument to that in the proof of Theorem~\ref{theorem: Case 1}, we have $\theta\leq\sqrt{L(\ww_0)}/\gamma$, which implies $0\leq 1-2\tilde{\tau}_1\rho\theta <1$.
        \end{proof}

    \subsection{Theorem \ref{theorem: Case 2}}\label{section: proof of case 2}
        \begin{lemma}\label{lem: pseudoinverse norm bound}
            Let $\AA_1\in\mathbb{R}^{m\times n}$ and $\AA_2\in\mathbb{R}^{n\times n}$ for arbitrary positive integers $m$ and $n$.
            Suppose that $\AA_2$ is non-singular.
            If we let
            \begin{equation*}
                \AA = \begin{bmatrix} \AA_1 \\ \AA_2 \end{bmatrix} \in \bbR^{(m+n)\times n},
            \end{equation*}
            then $\|\AA^\dagger\| \leq \|\AA_2^{-1}\|$.
        \end{lemma}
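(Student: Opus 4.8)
The plan is to reduce everything to the smallest singular value of $\AA$. First I would note that, since $\AA_2$ is non-singular, its $n$ rows are already linearly independent, so $\AA$ has full column rank $n$; consequently $\AA^\dagger = (\AA^T\AA)^{-1}\AA^T$, and since the nonzero singular values of $\AA^\dagger$ are the reciprocals of those of $\AA$, we have $\|\AA^\dagger\| = 1/\sigma_{\min}(\AA)$, where $\sigma_{\min}(\AA) = \min_{\|\xx\|=1}\|\AA\xx\|$ denotes the smallest singular value of $\AA$. Likewise $\|\AA_2^{-1}\| = 1/\sigma_{\min}(\AA_2)$. Hence the claim $\|\AA^\dagger\| \le \|\AA_2^{-1}\|$ is equivalent to the lower bound $\sigma_{\min}(\AA) \ge \sigma_{\min}(\AA_2)$.

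The key step is then a one-line consequence of the block structure. For any $\xx \in \bbR^n$, reading off the two blocks gives $\|\AA\xx\|^2 = \|\AA_1\xx\|^2 + \|\AA_2\xx\|^2 \ge \|\AA_2\xx\|^2 \ge \sigma_{\min}(\AA_2)^2\|\xx\|^2$, where the final inequality is the standard bound $\|\AA_2\xx\| \ge \sigma_{\min}(\AA_2)\|\xx\|$ for the invertible matrix $\AA_2$. Minimizing the left-hand side over unit vectors $\xx$ yields $\sigma_{\min}(\AA) \ge \sigma_{\min}(\AA_2)$, and inverting gives the desired inequality.

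There is no serious obstacle here; the only point requiring care is the justification of $\|\AA^\dagger\| = 1/\sigma_{\min}(\AA)$, which relies on $\AA$ having full column rank so that all its singular values are strictly positive---precisely what non-singularity of $\AA_2$ guarantees. Alternatively, one could bypass singular values entirely: from $\|\xx\| = \|\AA_2^{-1}\AA_2\xx\| \le \|\AA_2^{-1}\|\,\|\AA_2\xx\| \le \|\AA_2^{-1}\|\,\|\AA\xx\|$, and using that $\AA\AA^\dagger$ is the orthogonal projection onto $\mathcal{R}(\AA)$ (so $\|\AA\AA^\dagger\yy\| \le \|\yy\|$), applying the inequality with $\xx = \AA^\dagger\yy$ gives $\|\AA^\dagger\yy\| \le \|\AA_2^{-1}\|\,\|\AA\AA^\dagger\yy\| \le \|\AA_2^{-1}\|\,\|\yy\|$ for every $\yy\in\bbR^{m+n}$, whence $\|\AA^\dagger\| \le \|\AA_2^{-1}\|$.
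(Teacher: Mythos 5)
Your proposal is correct, and your primary argument is essentially the paper's proof: both reduce the claim to the bound $\min_{\|\vv\|=1}\|\AA\vv\| \geq \min_{\|\vv\|=1}\|\AA_2\vv\|$, which follows immediately from $\|\AA\vv\|^2 = \|\AA_1\vv\|^2 + \|\AA_2\vv\|^2$. The only real difference is how the identity $\|\AA^\dagger\| = \big(\min_{\|\vv\|=1}\|\AA\vv\|\big)^{-1}$ is justified: you cite the standard SVD fact that the nonzero singular values of $\AA^\dagger$ are reciprocals of those of $\AA$ (valid here since $\AA$ has full column rank), whereas the paper derives it from first principles, decomposing an arbitrary $\xx$ into components in $\mathcal{R}(\AA)$ and $\mathcal{R}(\AA)^\perp = \mathcal{N}(\AA^\dagger)$ to show the operator norm is attained on $\mathcal{R}(\AA)$, and then using the left-inverse property $\AA^\dagger\AA = \eye$ to rewrite the maximization as a minimization of $\|\AA\vv\|/\|\vv\|$. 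Your closing alternative, however, is a genuinely different and arguably cleaner route: combining $\|\xx\| \leq \|\AA_2^{-1}\|\,\|\AA_2\xx\| \leq \|\AA_2^{-1}\|\,\|\AA\xx\|$ with the fact that $\AA\AA^\dagger$ is the orthogonal projection onto $\mathcal{R}(\AA)$ (hence non-expansive) gives $\|\AA^\dagger\yy\| \leq \|\AA_2^{-1}\|\,\|\yy\|$ for every $\yy$ directly, with no need to characterize $\|\AA^\dagger\|$ via singular values or even to invoke full column rank explicitly. That version is shorter and more self-contained than the paper's argument; the paper's version has the mild advantage of exhibiting $\|\AA^\dagger\|^{-1}$ exactly as $\min_{\|\vv\|=1}\|\AA\vv\|$, which makes the final block-structure step transparent.
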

        \begin{proof}
            Recall that
            \begin{equation*}
                \|\AA^\dagger\| = \max_{\xx\neq\zero} \frac{\|\AA^\dagger\xx\|}{\|\xx\|}.
            \end{equation*}
            For any such $\xx$ we may write $\xx=\yy+\zz$ with $\yy\in\mathcal{R}(\AA)$ and $\zz\in\mathcal{R}(\AA)^\perp = \mathcal{N}(\AA^T) = \mathcal{N}(\AA^\dagger)$.
            Therefore, $\AA^\dagger\xx = \AA^\dagger\yy$ and $\|\xx\|^2 = \|\yy\|^2 + \|\zz\|^2$.
            This implies
            \begin{equation*}
                \frac{\|\AA^\dagger\xx\|}{\|\xx\|}
                = \frac{\|\AA^\dagger\yy\|}{\sqrt{\|\yy\|^2 + \|\zz\|^2}},
            \end{equation*}
            which is maximized when $\zz=\zero$.
            Thus, 
            \begin{equation*}
                \|\AA^\dagger\| 
                = \max_{\yy\in\mathcal{R}(\AA)\backslash\{\zero\}} \frac{\|\AA^\dagger\yy\|}{\|\yy\|}.
            \end{equation*}
            As $\AA_2$ has full column rank then so does $\AA$. 
            Thus, $\AA^\dagger$ is the left inverse of $\AA$ and $\AA\vv = \zero$ only when $\vv = \zero$. 
            Therefore, 
            \begin{equation*}
                \|\AA^\dagger\| 
                = \max_{\vv\neq\zero} \frac{\|\AA^\dagger\AA\vv\|}{\|\AA\vv\|} 
                = \max_{\vv\neq\zero} \frac{\|\vv\|}{\|\AA\vv\|} 
                = \bigg(\min_{\vv\neq\zero} \frac{\|\AA\vv\|}{\|\vv\|}\bigg)^{-1} 
                = \bigg(\min_{\|\vv\|=1} \|\AA\vv\|\bigg)^{-1}.
            \end{equation*}
            This implies
            \begin{equation*}
                \|\AA^\dagger\|^{-1}
                = \min_{\|\vv\|=1} \|\AA\vv\|
                = \min_{\|\vv\|=1} \Bigg\|\begin{bmatrix}\AA_1\vv \\ \AA_2\vv\end{bmatrix}\Bigg\| 
                \geq \min_{\|\vv\|=1} \|\AA_2\vv\|
                = \|\AA_2^{-1}\|^{-1}. \qedhere
            \end{equation*}
        \end{proof}
        
        The inequality~\eqref{eq: H tilde dagger norm bound} follows immediately from Lemma~\ref{lem: pseudoinverse norm bound}. 
        With this, we now give a proof of Theorem~\ref{theorem: Case 2}.
        
        \begin{proof}[Proof of Theorem~\ref{theorem: Case 2}]
            Suppose iteration $t$ is in \labelcref{Case 2}.
            For this iteration, we use the update direction $\pp_t = \sum_{i=1}^{m}\pp_{t,i}/m$, with $\pp_{t,i}=-\tilde{\HH}_{t,i}^{\dagger}\tilde{\bgg}_t$.
            It follows from \eqref{eq: H tilde dagger norm bound} that
            \begin{equation*}
                \|\pp_t\|
                \leq \frac{1}{m}\sum_{i=1}^{m} \|\pp_{t,i}\|
                \leq \frac{1}{m}\sum_{i=1}^{m} \frac{1}{\phi} \|\tilde{\bgg}_t\|
                = \frac{1}{\phi} \|\tilde{\bgg}_t\|
                = \frac{1}{\phi} \|\bgg_t\|,
            \end{equation*}
            where $\phi$ is as in Algorithm~\ref{alg: Our Method}.
            From this and an analogous argument to that in the proof of Theorem~\ref{theorem: Case 1}, for all $\alpha\in(0,\tau_2]$, where $\tau_2$ is as in \eqref{eq: Case 2 Convergence}, we have
            \begin{equation*}
                \|\grad f(\ww_t+\alpha\pp_t)\|^2
                \leq \|\bgg_t\|^2 + 2\alpha\rho\langle\pp_t,\HH_t\bgg_t\rangle.
            \end{equation*}
            Therefore, line-search \eqref{eq: Armijo-type line seach} will pass for some $\alpha_t \geq \tau_2$ and $\| \bgg_{t+1} \|^2 \leq (1-2\tau_2\rho\theta) \|\bgg_t\|^2$.
            Moreover, from an analogous argument to that in the proof of Theorem~\ref{theorem: Case 1}, we have $\theta\leq\sqrt{L(\ww_0)}/\phi$, which implies $0\leq 1-2\tau_2\rho\theta <1$.
        \end{proof}

    \subsection{Theorem \ref{theorem: Case 2 Inexact}}
        \begin{proof}[Proof of Theorem~\ref{theorem: Case 2 Inexact}]
            Suppose iteration $t$ is in \labelcref{Case 2*}.
            For this iteration, we use the update direction $\pp_t = \sum_{i=1}^{m}\pp_{t,i}$, where $\pp_{t,i}=-\vv_{t,i}^{(2)}$ and $\vv_{t,i}^{(2)}$ is an approximation of $\tilde{\HH}_{t,i}^\dagger\tilde{\bgg}_t$ satisfying the inexactness condition in \eqref{eq: Case 2 inexactness condition}. 
            It follows from \eqref{eq: H tilde dagger norm bound} and \eqref{eq: Case 2 inexactness condition corollary} that
            \begin{equation*}
                \|\pp_t\|
                \leq \frac{1}{m}\sum_{i=1}^{m}\|\vv_{t,i}^{(2)}\|
                \leq \frac{1}{m}\sum_{i=1}^{m}
                \bigg(1 + \varepsilon_i^{(2)} \frac{K_i^2+\phi^2}{\phi^2}\bigg) \|\tilde{\HH}_{t,i}^\dagger\tilde{\bgg}_t\|
                \leq c \|\bgg_t\|,
            \end{equation*}
            where $c$ is as in \eqref{eq: Case 2 Inexact Convergence}.
            From this and an analogous argument to that in the proof of Theorem~\ref{theorem: Case 1}, for all $\alpha\in(0,\tilde{\tau}_2]$, where $\tilde{\tau}_2$ is as in \eqref{eq: Case 2 Inexact Convergence}, we have
            \begin{equation*}
                \big\|\grad f(\ww_t+\alpha\pp_t)\big\|^2
                \leq \|\bgg_t\|^2 + 2\alpha\rho\langle\pp_t,\HH_t\bgg_t\rangle.
            \end{equation*}
            Therefore, line-search \eqref{eq: Armijo-type line seach} will pass for some $\alpha_t \geq \tilde{\tau}_2$ and $\| \bgg_{t+1} \|^2 \leq (1-2\tilde{\tau}_2\rho\theta) \|\bgg_t\|^2$.
            Moreover, from an analogous argument to that in the proof of Theorem~\ref{theorem: Case 1}, we have $\theta\leq c\sqrt{L(\ww_0)}$, which implies $0\leq 1-2\tilde{\tau}_2\rho\theta <1$.
        \end{proof}

    \subsection{Theorem \ref{theorem: Case 3}}\label{section: proof of case 3}
        \begin{proof}[Proof of Theorem~\ref{theorem: Case 3}]
            Suppose iteration $t$ is in \labelcref{Case 3}.
            Recall that for this iteration, each worker $i\in\mathcal{I}_t\neq\emptyset$, as defined in \eqref{eq: set of Case 3 iteration indices}, computes
            \begin{equation*}
                \pp_{t,i} = -\tilde{\HH}_{t,i}^\dagger\tilde{\bgg}_t - \lambda_{t,i}(\tilde{\HH}_{t,i}^T\tilde{\HH}_{t,i})^{-1}\HH_t\bgg_t, 
                \quad\text{where}\quad
                \lambda_{t,i} = \frac{- \langle \tilde{\HH}_{t,i}^\dagger\tilde{\bgg}_t, \HH_t\bgg_t \rangle + \theta\|\bgg_t\|^2}
                    {\big\langle (\tilde{\HH}_{t,i}^T\tilde{\HH}_{t,i})^{-1}\HH_t\bgg_t,\HH_t\bgg_t \big\rangle}
                > 0.
            \end{equation*}
            The term $\lambda_{t,i}$ is both well-defined and positive by Assumptions~\ref{assumption: Case 3 Gamma} and \ref{assumption: GHNSP}, lines 4 and 5 of Algorithm~\ref{alg: Our Method}, and the definition of $\mathcal{I}_t$.
            It follows from
            $\big\langle (\tilde{\HH}_{t,i}^T\tilde{\HH}_{t,i})^{-1}\HH_t\bgg_t,\HH_t\bgg_t \big\rangle = \big\|(\tilde{\HH}_{t,i}^T)^\dagger\HH_t\bgg_t\big\|^2$ and inequality~\eqref{eq: H tilde dagger norm bound} that for all $i\in\mathcal{I}_t$ we have
            \begin{align*}
                \lambda_{t,i} \big\|(\tilde{\HH}_{t,i}^T\tilde{\HH}_{t,i})^{-1}\HH_t\bgg_t\big\| 
                &= \big({-}\langle \tilde{\HH}_{t,i}^\dagger\tilde{\bgg}_t, \HH_t\bgg_t \rangle + \theta\|\bgg_t\|^2\big) 
                    \frac{\big\|(\tilde{\HH}_{t,i}^T\tilde{\HH}_{t,i})^{-1}\HH_t\bgg_t\big\|}
                        {\big\|(\tilde{\HH}_{t,i}^T)^\dagger\HH_t\bgg_t\big\|^2} \\
                &= \Bigg(
                        \frac{{-}\langle \tilde{\HH}_{t,i}^\dagger\tilde{\bgg}_t, \HH_t\bgg_t \rangle + \theta\|\bgg_t\|^2}
                            {\big\|(\tilde{\HH}_{t,i}^T)^\dagger\HH_t\bgg_t\big\|}
                    \Bigg)
                    \Bigg(
                        \frac{\big\|\tilde{\HH}_{t,i}^\dagger\big((\tilde{\HH}_{t,i}^T)^\dagger\HH_t\bgg_t\big)\big\|}
                            {\big\|(\tilde{\HH}_{t,i}^T)^\dagger\HH_t\bgg_t\big\|}
                    \Bigg) \\
                &\leq \frac{1}{\phi} 
                    \Bigg(
                        \frac{{-}\langle \tilde{\HH}_{t,i}^\dagger\tilde{\bgg}_t, \HH_t\bgg_t \rangle}
                            {\big\|(\tilde{\HH}_{t,i}^T)^\dagger\HH_t\bgg_t\big\|}
                        +
                        \frac{\theta\|\bgg_t\|^2}
                            {\big\|(\tilde{\HH}_{t,i}^T)^\dagger\HH_t\bgg_t\big\|}
                    \Bigg).
            \end{align*}
            Moreover, by \eqref{eq: case 3 lower bound}, for all $i\in\mathcal{I}_t$ we have
            \begin{align*}
                \lambda_{t,i}\big\|(\tilde{\HH}_{t,i}^T\tilde{\HH}_{t,i})^\dagger\HH_t\bgg_t\big\| 
                &\leq \frac{1}{\phi} 
                    \Bigg(
                        \sqrt{\frac{K_i^2+\phi^2}{\phi^2}}\|\bgg_t\|
                        +
                        \frac{\theta}{\gamma} \sqrt{\frac{K_i^2+\phi^2}{\nu}}\|\bgg_t\|
                    \Bigg) \\
                &= \bigg( \frac{1}{\phi} + \frac{\theta}{\gamma\sqrt{\nu}} \bigg) \sqrt{\frac{K_i^2+\phi^2}{\phi^2}} \|\bgg_t\|.
            \end{align*}
            Therefore, for all $i\in\mathcal{I}_t$ we have
            \begin{equation*}
                \|\pp_{t,i}\|
                \leq \|\tilde{\HH}_{t,i}^\dagger\tilde{\bgg}_t\| + \lambda_{t,i} \big\|(\tilde{\HH}_{t,i}^T\tilde{\HH}_{t,i})^{-1}\HH_t\bgg_t\big\|
                \leq \frac{1}{\phi}\|\bgg_t\| + \bigg( \frac{1}{\phi} + \frac{\theta}{\gamma\sqrt{\nu}} \bigg) \sqrt{\frac{K_i^2+\phi^2}{\phi^2}} \|\bgg_t\|.
            \end{equation*}
            This, and how $\pp_{t,i} = -\tilde{\HH}_{t,i}^\dagger\tilde{\bgg}_t$ for $i\notin\mathcal{I}_t$, implies
            \begin{equation*}
                \|\pp_t\| 
                \leq \frac{1}{m}\bigg(\sum_{i\notin \mathcal{I}_t}\|\pp_{t,i}\| + \sum_{i\in \mathcal{I}_t}\|\pp_{t,i}\|\bigg)
                \leq c \|\bgg_t\|,
            \end{equation*}
            where $c$ is as in \eqref{eq: Case 3 Convergence}.
            From this and an analogous argument to that in the proof of Theorem~\ref{theorem: Case 1}, for $\alpha\in(0,\tau_3]$, where $\tau_3$ is as in \eqref{eq: Case 3 Convergence}, we have 
            \begin{equation*}
                \big\|\grad f(\ww_t+\alpha\pp_t)\big\|^2 \leq \|\bgg_t\|^2 + 2\alpha\rho\langle\pp_t,\HH_t\bgg_t\rangle.
            \end{equation*}
            Therefore, line-search \eqref{eq: Armijo-type line seach} will pass for some $\alpha_t \geq \tau_3$ and $\| \bgg_{t+1} \|^2 \leq (1-2\tau_3\rho\theta) \|\bgg_t\|^2$. 
            Moreover, from an analogous argument to that in the proof of Theorem~\ref{theorem: Case 1}, we have \hbox{$\theta\leq c\sqrt{L(\ww_0)}$}, which implies $0\leq 1-2\tau_3\rho\theta<1$. 
            Because $\gamma\leq\sqrt{L(\ww_0)}$ and $\nu\in(0,1]$, the inequality $\theta\leq c\sqrt{L(\ww_0)}$ holds for all $\theta>0$.
        \end{proof}

    \subsection{Lemma \ref{lem: necessary conditions}}\label{section: proof of necessary conditions lemma}
        \begin{proof}[Proof of Lemma \ref{lem: necessary conditions}]
            Suppose $|\mathcal{I}_t|<m$ for some iteration $t$, as in \eqref{eq: set of Case 3 iteration indices}.
            Therefore, for all $i\notin\mathcal{I}_t$, inequality \eqref{eq: H tilde dagger norm bound} implies
            \begin{equation*}
                \theta\|\bgg_t\|^2 
                \leq \langle\tilde{\HH}_{t,i}^\dagger\tilde{\bgg}_t,\HH_t\bgg_t\rangle
                \leq \|\bgg_t\| \cdot \big\|(\tilde{\HH}_{t,i}^T)^\dagger\HH_t\bgg_t\big\|
                \leq \frac{1}{\phi}\|\bgg_t\|\cdot\|\HH_t\bgg_t\|,
            \end{equation*}
            which gives $\theta\phi \leq \|\HH_t\bgg_t\|/\|\bgg_t\|$. 
        \end{proof}
        
    \subsection{Theorem \ref{theorem: Case 3 Inexact}}\label{section: proof of case 3 Inexact}
        \begin{proof}[Proof of Theorem~\ref{theorem: Case 3 Inexact}]
            Suppose iteration $t$ is in \labelcref{Case 3*}.
            Recall that for this iteration, each worker $i\in\mathcal{I}_t^*\neq\emptyset$, as defined in \eqref{eq: set of Case 3* iteration indices}, computes
            \begin{equation*}
                \pp_{t,i} 
                = - \vv_{t,i}^{(2)} - \lambda_{t,i}\vv_{t,i}^{(3)}, 
                \quad\text{where}\quad
                \lambda_{t,i} 
                = \frac{-\langle \vv_{t,i}^{(2)}, \HH_t\bgg_t \rangle + \theta\|\bgg_t\|^2}
                {\langle \vv_{t,i}^{(3)}, \HH_t\bgg_t \rangle} 
                > 0.
            \end{equation*}
            The term $\lambda_{t,i}$ is both well-defined and positive by Assumptions~\ref{assumption: Case 3 Gamma} and \ref{assumption: GHNSP}, lines 4 and 5 of Algorithm~\ref{alg: Our Method}, the definition of $\mathcal{I}_{t}^*$ and the condition in \eqref{eq: Case 3 inexactness condition}.
            The inexactness condition in \eqref{eq: Case 3 inexactness condition} implies
            \begin{align*}
                - \langle \vv_{t,i}^{(3)}, \HH_t\bgg_t \rangle 
                + \big\langle (\tilde{\HH}_{t,i}^T\tilde{\HH}_{t,i})^{-1}\HH_t\bgg_t,\HH_t\bgg_t \big\rangle
                &=
                - \big\langle  
                \tilde{\HH}_{t,i}^T\tilde{\HH}_{t,i}\vv_{t,i}^{(3)} - \HH_t\bgg_t,
                (\tilde{\HH}_{t,i}^T\tilde{\HH}_{t,i})^{-1}\HH_t\bgg_t
                \big\rangle \\
                &\leq \varepsilon_i^{(3)} \|\HH_t\bgg_t\| \big\|(\tilde{\HH}_{t,i}^T\tilde{\HH}_{t,i})^{-1}\HH_t\bgg_t\big\|.
            \end{align*}
            By Assumption~\ref{assumption: Lipschitz Continuity}, we have
            \begin{equation*}
                \varepsilon_i^{(3)} \|\HH_t\bgg_t\| \big\|(\tilde{\HH}_{t,i}^T\tilde{\HH}_{t,i})^{-1}\HH_t\bgg_t\big\|
                \leq 
                \varepsilon_i^{(3)} \sqrt{\frac{K_i^2+\phi^2}{\phi^2}} 
                \big\langle (\tilde{\HH}_{t,i}^T\tilde{\HH}_{t,i})^{-1}\HH_t\bgg_t,\HH_t\bgg_t \big\rangle.
            \end{equation*}
            Therefore,
            \begin{equation*}
                \langle \vv_{t,i}^{(3)}, \HH_t\bgg_t \rangle
                \geq 
                \Bigg( 1 - \varepsilon_i^{(3)} \sqrt{\frac{K_i^2+\phi^2}{\phi^2}} \Bigg)
                \big\langle (\tilde{\HH}_{t,i}^T\tilde{\HH}_{t,i})^{-1}\HH_t\bgg_t,\HH_t\bgg_t \big\rangle,
            \end{equation*}
            where the right-hand side is positive by Assumptions~\ref{assumption: Case 3 Gamma} and \ref{assumption: GHNSP}, lines 4 and 5 of Algorithm~\ref{alg: Our Method}, and how it is assumed that $\varepsilon_i^{(3)} < \sqrt{\phi^2/(K_i^2+\phi^2)}$.
            It follows from \eqref{eq: Case 2 inexactness condition corollary} and \eqref{eq: Case 3 inexactness condition corollary} that
            \begin{align*}
                \lambda_{t,i} \|\vv_{t,i}^{(3)}\|
                &\leq 
                \Bigg(\frac{1+\varepsilon_i^{(3)}(K_i^2+\phi^2)/\phi^2}{1-\varepsilon_i^{(3)}\sqrt{(K_i^2+\phi^2)/\phi^2}}\Bigg)
                \big({-\langle \vv_{t,i}^{(2)}, \HH_t\bgg_t \rangle} + \theta\|\bgg_t\|^2\big)
                \frac{\|(\tilde{\HH}_{t,i}^T\tilde{\HH}_{t,i})^{-1}\HH_t\bgg_t\big\|}{\big\|(\tilde{\HH}_{t,i}^T)^\dagger\HH_t\bgg_t\big\|^2} \\
                &\leq 
                \frac{1}{\phi} 
                \Bigg(\frac{1+\varepsilon_i^{(3)}(K_i^2+\phi^2)/\phi^2}{1-\varepsilon_i^{(3)}\sqrt{(K_i^2+\phi^2)/\phi^2}}\Bigg)
                \Bigg(
                \frac{\|\vv_{t,i}^{(2)}\| \|\HH_t\bgg_t\| + \theta\|\bgg_t\|^2}{\big\|(\tilde{\HH}_{t,i}^T)^\dagger\HH_t\bgg_t\big\|}
                \Bigg) \\
                &\leq
                \Bigg(\frac{1+\varepsilon_i^{(3)}(K_i^2+\phi^2)/\phi^2}{1-\varepsilon_i^{(3)}\sqrt{(K_i^2+\phi^2)/\phi^2}}\Bigg)
                \Bigg(
                \frac{1}{\phi} \bigg(1+\varepsilon_i^{(2)}\frac{K_i^2+\phi^2}{\phi^2}\bigg) + \frac{\theta}{\gamma\sqrt{\nu}}
                \Bigg) \sqrt{\frac{K_i^2+\phi^2}{\phi^2}} \|\bgg_t\|.
            \end{align*}
            This and \eqref{eq: Case 2 inexactness condition corollary}, and how $\pp_{t,i} = - \vv_{t,i}^{(2)}$ for $i\notin\mathcal{I}_t^*$, imply
            \begin{equation*}
                \|\pp_t\| 
                \leq \frac{1}{m}\bigg(\sum_{i\notin \mathcal{I}_t}\|\pp_{t,i}\| + \sum_{i\in \mathcal{I}_t}\|\pp_{t,i}\|\bigg)
                \leq \frac{1}{m}\bigg(\sum_{i=1}^{m} \|\vv_{t,i}^{(2)}\| 
                + \sum_{i\in \mathcal{I}_t} \lambda_{t,i} \|\vv_{t,i}^{(3)}\|\bigg) 
                \leq c \|\bgg_t\|,
            \end{equation*}
            where $c$ is as in \eqref{eq: Case 3 Inexact Convergence c}.
            From this and an analogous argument to that in the proof of Theorem~\ref{theorem: Case 1}, for all $\alpha\in(0,\tilde{\tau}_3]$, where $\tilde{\tau}_3$ is as in \eqref{eq: Case 3 Inexact Convergence tau}, we have
            \begin{equation*}
                \big\|\grad f(\ww_t+\alpha\pp_t)\big\|^2
                \leq \|\bgg_t\|^2 + 2\alpha\rho\langle\pp_t,\HH_t\bgg_t\rangle.
            \end{equation*}
            Therefore, line-search \eqref{eq: Armijo-type line seach} will pass for some $\alpha_t \geq \tilde{\tau}_3$ and $\| \bgg_{t+1} \|^2 \leq (1-2\tilde{\tau}_3\rho\theta) \|\bgg_t\|^2$.
            Moreover, from an analogous argument to that in the proof of Theorem~\ref{theorem: Case 1}, we have \hbox{$\theta\leq c\sqrt{L(\ww_0)}$}, which implies $0\leq 1-2\tilde{\tau}_3\rho\theta<1$. 
            Because $\gamma\leq\sqrt{L(\ww_0)}$ and $\nu\in(0,1]$, the inequality $\theta\leq c\sqrt{L(\ww_0)}$ holds for all $\theta>0$.
        \end{proof}
        
\end{document}